\theoremstyle{plain}
\newtheorem{theorem}{Theorem}
\newtheorem{proposition}[theorem]{Proposition}
\newtheorem{lemma}[theorem]{Lemma}
\theoremstyle{definition}
\theoremstyle{remark}
\newtheorem{remark}{Remark}
\DeclareMathOperator*{\essinf}{ess\,inf}
\DeclareMathOperator*{\esssup}{ess\,sup}
\begin{document}
	
	\author{Phuong Le}
	\address{Phuong Le$^{1,2}$ (ORCID: 0000-0003-4724-7118)\newline
		$^1$Faculty of Economic Mathematics, University of Economics and Law, Ho Chi Minh City, Vietnam; \newline
		$^2$Vietnam National University, Ho Chi Minh City, Vietnam}
	\email{phuongl@uel.edu.vn}
	
	\subjclass[2010]{35J92, 35J75, 35B40, 35B51}
	\keywords{quasilinear elliptic problems, $p$-Laplace problems, singular nonlinearities, monotonicity}
	
	
	
	\title[Monotonicity in half-space]{Monotonicity in half-spaces for singular quasilinear elliptic problems involving the gradient}
	\begin{abstract}
		We study positive solutions to the problem $-\Delta_p u + \vartheta |\nabla u|^q = \frac{1}{u^\gamma} + f(u)$ in $\mathbb{R}^N_+$ with the zero Dirichlet boundary condition, where $p>1$, $\gamma>0$, $0<q\le p$, $\vartheta\ge0$ and $f:[0,+\infty)\to\mathbb{R}$ is a locally Lipschitz continuous function. We describe the behavior of solutions and their derivatives near the boundary. Then we exploit that information and the moving plane method to prove the monotonicity of solutions in the $x_N$-direction. This result holds for $W^{1,p}_{\rm loc}(\mathbb{R}^2_+)\cap L^\infty_{\rm loc}(\overline{\mathbb{R}^2_+})$ solutions in dimension two and for $W^{1,p}_{\rm loc}(\mathbb{R}^N_+)$ solutions which are bounded in strips in higher dimensions. Most of our results are new even in the case $\vartheta=0$ or $p=2$.
	\end{abstract}
	
	\maketitle
	
	\section{Introduction}
	We are interested in the qualitative properties of solutions to the quasilinear elliptic problem
	\begin{equation}\label{main}
		\begin{cases}
			-\Delta_p u + \vartheta |\nabla u|^q = \frac{1}{u^\gamma} + f(u) &\text{ in } \mathbb{R}^N_+,\\
			u>0 &\text{ in } \mathbb{R}^N_+,\\
			u=0 &\text{ on } \partial\mathbb{R}^N_+,
		\end{cases}
	\end{equation}
	where $N\ge2$, $p>1$, $\gamma>0$, $0<q\le p$, $\vartheta\ge0$, $f:[0,+\infty)\to\mathbb{R}$ is a locally Lipschitz continuous function and $\mathbb{R}^{N}_{+}:=\{x=(x',x_N)\in\mathbb{R}^{N} \mid x'\in\mathbb{R}^{N-1},x_{N}>0\}$ is the upper half-space. We call $u \in W^{1,p}_{\rm loc}(\mathbb{R}^N_+)$ a \textit{weak solution} to problem \eqref{main} if
	\[
	\essinf_{x\in K} u(x) > 0 \quad \text{ for every } K \subset\subset \mathbb{R}^N_+,
	\]
	and
	\[
	\int_{\mathbb{R}^N_+} |\nabla u|^{p-2} \langle \nabla u, \nabla \varphi \rangle + \vartheta \int_{\mathbb{R}^N_+} |\nabla u|^q \varphi = \int_{\mathbb{R}^N_+} \left(\frac{1}{u^\gamma} + f(u)\right) \varphi \quad\text{ for } \varphi \in C^1_c(\mathbb{R}^N_+).
	\]
	Moreover, the zero Dirichlet boundary condition also has to be understood in the weak meaning
	\[
	(u - \varepsilon)^+ \varphi \chi_{\mathbb{R}^N_+} \in W^{1,p}_0(\mathbb{R}^N_+) \quad \text{ for every } \varphi \in C^1_c(\mathbb{R}^N_+) \text{ and } \varepsilon>0.
	\]
	
	One purpose of this paper is to study the monotonicity properties of solutions to problem \eqref{main} with respect to the $x_{N}$-direction. The main tool we use is the moving plane method, which was introduced by Alexandrov \cite{MR143162} and Serrin \cite{MR333220}. This method relies on several types of maximum principles and was used in the celebrated papers \cite{MR1470317,MR544879}, where the symmetry of solutions to semilinear elliptic equations in bounded domains was established. For semilinear elliptic equations $-\Delta u=f(u)$ in $\mathbb{R}^{N}_{+}$, where the monotonicity of the solutions is expected, we refer to \cite{MR1655510,MR1470317,MR2076772,MR1190345,MR2525168,2024arXiv240900365L}.
	
	The quasilinear elliptic equations involving the $p$-Laplacian are harder to deal with because of the nonlinear nature of the operator. Hence, comparison principles are not equivalent to maximum principles for the $p$-Laplacian. Furthermore, the singularity or degeneracy of the operator (corresponding to $1<p<2$ and $p>2$, respectively) also causes the lack of $C^{2}$ regularity of the weak solutions and other difficulties. First symmetry results for the regular $p$-Laplace problem
	\begin{equation}\label{regular}
		\begin{cases}
			-\Delta_{p}u=f(u) & \text{ in }\Omega, \\
			u>0 & \text{ in }\Omega, \\
			u=0 & \text{ on }\partial\Omega,
		\end{cases}
	\end{equation}
	where $\Omega$ is a bounded domain of $\mathbb{R}^{N}$, were obtained in \cite{MR1648566} for the case $1<p<2$. The case $p>2$ requires the use of weighted Sobolev spaces and was solved in \cite{MR2096703} for positive nonlinearities.
	The monotonicity of (possibly unbounded) solutions to \eqref{regular} with $\Omega=\mathbb{R}_{+}^{N}$ was established for $1<p<2$ in \cite{MR3303939} and for $p>2$ in \cite{MR3752525} via the moving plane method. All these results assume that $f$ is a positive locally Lipschitz continuous function. More recently, \cite{MR3303939} was partially improved by the work \cite{MR4439897}, where the restriction on positivity of $f$ was relaxed.
	
	Due to the presence of the term $\frac{1}{u^\gamma}$ with $\gamma>0$, problem \eqref{main} is singular. As a result, it is notable that $C^1$ regularity cannot be extended up to the boundary and the gradient generally blows up near the boundary (see \cite{MR1037213,2025arXiv250807360L}). Furthermore, the difficulty in understanding problem \eqref{main} lies in the fact that both sides of the problem may exhibit singularity near $\partial\Omega$.
	Singular elliptic problems without a gradient term have been studied by various authors in the past. We refer in particular to the pioneering papers by Crandall, Rabinowitz and Tartar \cite{MR427826} and by Lazer and McKenna \cite{MR1037213}, where they combine the method of subsolutions
	and supersolutions with approximation technique to prove the existence, uniqueness and qualitative properties of solutions to
	\begin{equation}\label{pure_singular}		
		\begin{cases}
			-\Delta_p u = \frac{a(x)}{u^\gamma} &\text{ in } \Omega,\\
			u>0 &\text{ in } \Omega,\\
			u=0 &\text{ on } \partial\Omega
		\end{cases}
	\end{equation}
	with $p=2$ (see \cite{MR1263903} for further insights). We also refer to the work by Boccardo and Orsina \cite{MR2592976} on the same problem in a more general setting, exploiting truncation arguments and Schauder's fixed point theorem. Later, the existence, uniqueness and regularity of solutions to problem \eqref{pure_singular} with $p>1$ were derived in \cite{MR3136107,MR3478284}. The Hopf type lemma for singular quasilinear equations was derived in \cite{MR4044739}. The boundary behavior of such problems with gradient terms in bounded domains was recently studied by the author in \cite{2025arXiv250807360L}.
	
	The monotonicity results for problem \ref{main} was established only in the case that $\vartheta=0$ and $\gamma>1$ (see \cite[Theorem 3]{2024arXiv240919557L} and also \cite{2024arXiv240900365L}). The other cases are not well understood. In this paper, we consider the general case $\vartheta>0$ and $\gamma>0$. We stress that the difficulty in the case $\vartheta>0$ is that the term $|\nabla u|^q$ may be unbounded in strips
	\[
	\Sigma_\lambda := \{x\in\mathbb{R}^N \mid 0<x_N<\lambda\}
	\]
	for any $\lambda>0$. Hence the estimates in \cite{MR3303939,MR3752525,MR4439897,MR3118616} do not hold anymore since they strongly rely on the boundedness of the gradient in strips.
	Furthermore, the behavior of solutions near the boundary in the case $0<\gamma\le1$, which was derived in \cite[Theorem 3]{2024arXiv240919557L}, is completely different from that in the case $\gamma>0$. Hence, we need more delicate estimates to overcome these difficulties. To state our first result, we denote by $B_r(x_0)$ the open ball in $\mathbb{R}^N$ of center $x_0$ and radius $r$. We also write $B_r=B_r(0)$.
	Then we denote $\mathbb{S}^{N-1}:=\partial B_1(0)$ and $e_N:=(0,\dots,0,1)$.
	
	We start with a result on the behavior of solutions near the boundary.
	\begin{theorem}\label{th:boundary_behavior} Let $u \in W^{1,p}_{\rm loc}(\mathbb{R}^N_+)$ be a solution to problem \eqref{main} with $u\in L^\infty(\Sigma_{\overline\lambda})$ for some $\overline\lambda>0$. Then the following assertions hold:
		\begin{enumerate}
			\item[(i)] When $\gamma>1$, we have $u \in C^{1,\alpha}_{\rm loc}(\mathbb{R}^N_+)\cap C^{\frac{p}{\gamma+p-1}}_{\rm loc}(\overline{\mathbb{R}^N_+})$ for some $\alpha\in(0,1)$. Moreover, for any $\eta_0 \in \mathbb{S}^{N-1}$,
			\begin{equation}\label{limit>1}
				\lim_{\substack{x_N\to0^+\\\eta\to\eta_0}} x_N^\frac{\gamma-1}{\gamma+p-1} \frac{\partial u(x)}{\partial \eta} = \left(\frac{(\gamma + p - 1)^p}{p^{p-1}(p-1)(\gamma - 1)}\right)^{\frac{1}{\gamma + p - 1}} \frac{p \langle\eta_0,e_N\rangle}{\gamma + p - 1}.
			\end{equation}
			Consequently, for every $\beta\in(0,1)$, there exist $\lambda_0,c_1,c_2>0$ such that
			\begin{equation}\label{derivative_bound>1}
				c_1 x_N^\frac{1-\gamma}{\gamma+p-1} < \frac{\partial u(x)}{\partial \eta} < c_2 x_N^\frac{1-\gamma}{\gamma+p-1} \quad\text{ in } \Sigma_{\lambda_0}
			\end{equation}
			for all $\eta\in \mathbb{S}^{N-1}$ with $\langle\eta,e_N\rangle \ge \beta$.
			\item[(ii)] When $\gamma=1$, we have $u \in C^{1,\alpha}_{\rm loc}(\mathbb{R}^N_+) \cap C^s_{\rm loc}(\overline{\mathbb{R}^N_+})$ for some $\alpha\in(0,1)$ and all $s\in(0,1)$. Moreover, for every $\beta\in(0,1)$, there exist $0<\lambda_0<1$, $c_1,c_2>0$ such that
			\begin{equation}\label{derivative_bound=1}
				c_1 \left(1-\ln x_N\right)^\frac{1}{p} \le \frac{\partial u(x)}{\partial \eta} \le c_2 \left(1-\ln x_N\right)^\frac{1}{p} \quad\text{ in } \Sigma_{\lambda_0}
			\end{equation}
			for all $\eta\in \mathbb{S}^{N-1}$ with $\langle\eta,e_N\rangle \ge \beta$.
			\item[(iii)] When $0<\gamma<1$, we have $u \in C^{1,\alpha}_{\rm loc}(\overline{\mathbb{R}^N_+})$ for some $\alpha\in(0,1)$. Moreover, for every $\beta\in(0,1)$, there exist $\lambda_0,c_1,c_2>0$ such that
			\begin{equation}\label{derivative_bound<1}
				c_1 < \frac{\partial u(x)}{\partial \eta} < c_2 \quad\text{ in } \Sigma_{\lambda_0}\cup\partial\mathbb{R}^N_+
			\end{equation}
			for all $\eta\in \mathbb{S}^{N-1}$ with $\langle\eta,e_N\rangle \ge \beta$.
		\end{enumerate}
	\end{theorem}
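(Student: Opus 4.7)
\medskip

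\noindent\textbf{Proof plan.} The strategy is to combine standard interior quasilinear regularity (Lieberman, DiBenedetto, Tolksdorf) with explicit one-dimensional barriers adapted to the three singular regimes, and then use a rescaling argument to extract the precise boundary limit \eqref{limit>1}. The local $L^\infty$-bound on $\Sigma_{\bar\lambda}$ together with the weak positivity $\essinf_K u>0$ on compact $K\subset\subset\mathbb{R}^N_+$ already yields, via interior estimates, that $u\in C^{1,\alpha}_{\rm loc}(\mathbb{R}^N_+)$ in every case; the whole problem is to understand what happens as $x_N\to 0^+$.

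\smallskip

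\noindent The driving ODE near the boundary is $-(|w'|^{p-2}w')'=w^{-\gamma}$ on a small interval $(0,\lambda_0)$. Inserting the power ansatz $w(t)=A t^\alpha$ in case $\gamma>1$ forces $\alpha=\frac{p}{\gamma+p-1}$ and $A^{\gamma+p-1}=\frac{1}{(p-1)\alpha^{p-1}(1-\alpha)}=\frac{(\gamma+p-1)^p}{p^{p-1}(p-1)(\gamma-1)}$, which is exactly the constant in \eqref{limit>1}. For $\gamma=1$ the ansatz $w(t)=c\,t(1-\ln t)^{1/p}$ gives $c^p=\frac{p}{p-1}$, and for $0<\gamma<1$ the linear ansatz $w(t)=c t$ solves the ODE to leading order after absorbing $w^{-\gamma}$ as a bounded perturbation. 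In each regime I would construct $\underline U,\overline U$ depending only on $x_N$ by perturbing the leading coefficient $A$ by $\pm\varepsilon$ and checking that the gradient term $\vartheta(\underline U')^q$ is of strictly lower order than the singular source $\underline U^{-\gamma}$ near $t=0$; explicitly, $(\underline U')^q$ contributes an exponent $\frac{q(1-\gamma)}{\gamma+p-1}$ while $\underline U^{-\gamma}$ contributes $-\frac{p\gamma}{\gamma+p-1}$, and the hypothesis $q\le p$ yields $q(\gamma-1)<p\gamma$, so the source dominates. This makes $\underline U$ (resp.\ $\overline U$) a strict sub- (resp.\ super-) solution on $\Sigma_{\lambda_0}$ after shrinking $\lambda_0$ to absorb $f(u)$ using the Lipschitz bound on $[0,\|u\|_{L^\infty(\Sigma_{\bar\lambda})}]$.

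\smallskip

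\noindent The next step is a weak comparison on $\Sigma_{\lambda_0}$: the operator $w\mapsto -\Delta_p w+\vartheta|\nabla w|^q-w^{-\gamma}$ is monotone decreasing in the zero-order part $-w^{-\gamma}$, so a Díaz--Saá type argument (after a horizontal translation of the barrier so that $\overline U$ dominates $u$ on the lateral boundary of a large cylinder, which is possible thanks to the local lower bound on $u$) gives $\underline U(x_N)\le u(x)\le \overline U(x_N)$ throughout $\Sigma_{\lambda_0}$. This immediately yields the stated Hölder regularity up to the boundary with exponent $\tfrac{p}{\gamma+p-1}$, resp.\ every $s<1$, resp.\ Lipschitz, and hence, combined with the known $C^{1,\alpha}$ boundary regularity for the non-singular case \cite{MR3136107,MR3478284}, the assertions in (iii) and the $C^{1,\alpha}_{\rm loc}(\mathbb{R}^N_+)\cap C^{p/(\gamma+p-1)}$ statement in (i).

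\smallskip

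\noindent To get the gradient bounds \eqref{derivative_bound>1}--\eqref{derivative_bound<1} and the precise limit, I would rescale around a point $x_0=(x_0',t)$ with small $t$: in case (i), set $v(y):=t^{-p/(\gamma+p-1)}u(x_0'+ty',ty_N)$, so that $v$ solves a problem of the same form on $B_1^+$, in which $v$ is bounded above and below by positive constants (thanks to the previous two-sided bounds), and the gradient term, which rescales with a favorable power of $t$ (again using $q\le p$), becomes a uniformly bounded perturbation on fixed balls. Applying boundary $C^{1,\alpha}$ estimates to $v$ and undoing the scaling produces \eqref{derivative_bound>1}; the analogous logarithmic/linear rescalings handle (ii)--(iii). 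Finally, any subsequential limit of $v$ as $t\to 0^+$ is a solution of the limiting one-dimensional problem on $(0,\infty)$ with the prescribed asymptotics at the origin, which by the uniqueness of the self-similar profile $w(t)=A t^{p/(\gamma+p-1)}$ must equal that profile; evaluating $\partial_\eta v$ at a fixed interior point and rescaling back gives \eqref{limit>1}. The main obstacle I anticipate is justifying that the $\vartheta|\nabla u|^q$ term, which is genuinely unbounded near $\partial\mathbb{R}^N_+$ when $\gamma\ge 1$, remains subordinate to the singular source throughout the barrier construction and the rescaling; controlling this interaction is where the assumption $q\le p$ plays a decisive role.
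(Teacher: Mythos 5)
Your outline mirrors the paper's strategy at a high level: establish sharp two-sided bounds $c_1\phi(x_N)\le u\le c_2\phi(x_N)$ via barriers and a comparison principle, then blow up at the boundary and identify the limit profile. The paper uses $N$-dimensional barriers built from powers (resp.\ log-modifications) of the first $p$-Laplacian eigenfunction on an annulus and on a small ball, which is a cosmetic difference from your one-dimensional ODE barriers. But there are two substantive issues.

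First, the comparison step is a genuine gap. You invoke a ``Di\'az--Sa\'a type argument'' on the grounds that the map $w\mapsto -\Delta_p w+\vartheta|\nabla w|^q-w^{-\gamma}$ is monotone in the zero-order part, but Di\'az--Sa\'a-type arguments and their usual weak comparison variants do not survive the gradient term $\vartheta|\nabla\cdot|^q$: the test-function identity that drives them has no room for a first-order term with this structure, and observing that $(\underline U')^q$ is of lower order than $\underline U^{-\gamma}$ only shows that $\underline U$ is a subsolution --- it does not give $u\ge\underline U$. The paper circumvents this precisely with Lemma \ref{lem:wcp} (from \cite{2025arXiv250807360L}), a comparison principle written for operators of the form $-\Delta_p+\vartheta|\nabla\cdot|^p+\vartheta$ (this structure admits an exponential-type change of variables that linearizes the gradient term), combined with the elementary bound $|\nabla u|^q\le|\nabla u|^p+1$ valid since $q\le p$, and with the extra smallness constraint $u_1\le(c/\vartheta)^{1/\gamma}$. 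Without citing or reconstructing such a comparison principle your lower bound is not justified, and ``controlling this interaction'' --- which you yourself flag as the main obstacle --- is exactly the step left open.

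Second, your closing sentence ``any subsequential limit of $v$ ... must equal the self-similar profile $w(t)=At^{p/(\gamma+p-1)}$'' is only correct for $\gamma>1$. After the appropriate rescaling in cases (ii) and (iii), the coefficient in front of the singular source vanishes (it is $(1-\ln\varepsilon_n)^{-1}$ for $\gamma=1$, and $\varepsilon_n^{1-\gamma}$ for $0<\gamma<1$), so the blow-up limit solves the $p$-harmonic Dirichlet problem in the half-space, not the singular equation, and one needs a Liouville-type classification of positive $p$-harmonic functions with linear growth vanishing on $\partial\mathbb{R}^N_+$ (the paper invokes \cite[Theorem 3.1]{MR2337497}), not uniqueness of a self-similar singular profile. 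The classification input used for $\gamma>1$ (\cite[Theorem 1.2]{MR4044739}) is of a different nature.
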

	
	In particular, Theorem \ref{th:boundary_behavior} implies that solutions are increasing in the $x_N$-direction in a narrow strip from the boundary. It provides us with a starting point to carry out the moving plane procedure to derive our next result. Consider the following assumptions:
	\begin{enumerate}[label=($F_\arabic*$)]
		\item\label{f1} $\frac{2N+2}{N+2} < p \le 2$ and $\{t\in(0,+\infty) \mid \frac{1}{t^\gamma} + f(t)=0\}$ is a discrete set,
		\item\label{f2} $p>1$ and $\frac{1}{t^\gamma} + f(t)>0$ for $t>0$.
	\end{enumerate}
	
	We can prove the following monotonicity result.
	\begin{theorem}\label{th:monotonicity} 
		Assume that either \ref{f1} or \ref{f2} holds. We also assume that $1< q\le p$ if $1<p\le2$ and $p-1\le q\le p$ if $p>2$. Let $u \in W^{1,p}_{\rm loc}(\mathbb{R}^N_+)$ be a solution to problem \eqref{main} with $u\in L^\infty(\Sigma_\lambda)$ for all $\lambda>0$. Then $u$ is monotone increasing in $x_N$-direction, that is,
		\[
		\frac{\partial u}{\partial x_N} \ge 0 \quad\text{ in } \mathbb{R}^N_+.
		\]
		Moreover, if $p>\frac{2N+2}{N+2}$ and $\frac{1}{t^\gamma} + f(t)>0$ for $t>0$, then
		\[
		\frac{\partial u}{\partial x_N} > 0 \quad\text{ in } \mathbb{R}^N_+.
		\]
	\end{theorem}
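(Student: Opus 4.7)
The plan is to run the moving plane method in the $x_N$-direction. For $\lambda>0$ set $x^\lambda=(x',2\lambda-x_N)$, $u_\lambda(x)=u(x^\lambda)$, and
\[
\Lambda=\{\lambda>0 \mid u\le u_\mu \text{ in } \Sigma_\mu \text{ for all } 0<\mu\le\lambda\}.
\]
Both $u$ and $u_\lambda$ solve the same equation $-\Delta_p v+\vartheta|\nabla v|^q=v^{-\gamma}+f(v)$ inside $\Sigma_\lambda$, while on $\{x_N=\lambda\}$ we have $u=u_\lambda$ and on $\partial\mathbb{R}^N_+$ we have $u=0\le u_\lambda$. The goal is to prove $\sup\Lambda=+\infty$, which gives $u(x',x_N)\le u(x',2\lambda-x_N)$ for every $\lambda>0$, hence $\partial_{x_N}u\ge0$.

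\emph{Starting point.} Theorem \ref{th:boundary_behavior} provides a strip $\Sigma_{\lambda_0}$ on which $\partial u/\partial x_N>0$ (with a quantitative lower bound in every direction close to $e_N$). Integrating along vertical segments yields $u(x',x_N)<u(x',2\lambda-x_N)$ for $0<x_N<\lambda\le\lambda_0/2$, so small $\lambda$ belong to $\Lambda$.

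\emph{Weak comparison in narrow strips.} The core estimate is obtained by testing the equations for $u$ and $u_\lambda$ against $(u-u_\lambda)^+\psi_R$, where $\psi_R$ is a cutoff localizing in $B_R$ (and, when necessary, an auxiliary cutoff away from $\partial\mathbb{R}^N_+$ to deal with the singularity; a Fatou/monotone convergence argument removes this cutoff afterward). Subtracting gives three pieces:
the principal part, which by the standard vector inequality
\[
\bigl\langle|\nabla u|^{p-2}\nabla u-|\nabla u_\lambda|^{p-2}\nabla u_\lambda,\nabla(u-u_\lambda)^+\bigr\rangle
\ge c(|\nabla u|+|\nabla u_\lambda|)^{p-2}|\nabla(u-u_\lambda)^+|^2,
\]
yields coercivity; the singular term $u^{-\gamma}-u_\lambda^{-\gamma}\le0$ on $\{u\ge u_\lambda\}$ has the correct sign and can be dropped; the $f$-term is controlled by the Lipschitz constant of $f$ on $[0,\|u\|_{L^\infty(\Sigma_\lambda)}]$ times $\|(u-u_\lambda)^+\|^2$, which Poincaré's inequality on the thin strip makes small. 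The genuinely delicate piece is the gradient term. Using $||a|^q-|b|^q|\le C(|a|+|b|)^{q-1}|a-b|$ together with Young's and H\"older's inequalities under the assumption $q\ge\max\{1,p-1\}$, it can be bounded by
\[
\varepsilon\int(|\nabla u|+|\nabla u_\lambda|)^{p-2}|\nabla(u-u_\lambda)^+|^2+C_\varepsilon\int\!\bigl(|\nabla u|+|\nabla u_\lambda|\bigr)^{a}|(u-u_\lambda)^+|^{b},
\]
and the residual integral is absorbed by Poincar\'e provided the strip is thin. The unbounded gradient near $\partial\mathbb{R}^N_+$, which is the main novelty here, is handled precisely by the quantitative bounds \eqref{derivative_bound>1}--\eqref{derivative_bound<1}: these guarantee that the weighted integrals $\int_{\Sigma_\lambda}|\nabla u|^{\alpha}\,dx$ are finite and, more importantly, tend to zero as $\lambda\to0^+$ because the blow-up rate of $|\nabla u|$ is integrable against the shrinking vertical extent. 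This is where restricting to $\frac{2N+2}{N+2}<p\le2$ (under \ref{f1}) versus the wider range under \ref{f2} enters, since the exponents $a,b$ produced by Young's inequality must match what can be absorbed. The conclusion is $(u-u_\lambda)^+\equiv0$ on every narrow enough strip.

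\emph{Propagation.} Let $\bar\lambda=\sup\Lambda$. By continuity $u\le u_{\bar\lambda}$ in $\Sigma_{\bar\lambda}$. Suppose $\bar\lambda<\infty$. Fix a large $R$ and a compact $K\subset\Sigma_{\bar\lambda}\cap B_R$. On connected components of $\{u<u_{\bar\lambda}\}$ the strong comparison principle for singular quasilinear operators (applicable here since $\frac{1}{t^\gamma}+f(t)$ is either strictly positive or has isolated zeros, so the argument can be performed on each component where both functions are bounded away from $0$) gives $u<u_{\bar\lambda}$ in $K$; by uniform continuity there is $\sigma>0$ with $u_{\bar\lambda}-u>\sigma$ on $K$. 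For $\delta>0$ small, the measure of $\Sigma_{\bar\lambda+\delta}\setminus K$ is small, and the narrow-strip comparison of the previous paragraph applies on that remainder (using again the vanishing of the gradient-weighted Poincar\'e constants as the strip shrinks), which yields $u\le u_{\bar\lambda+\delta}$ in $\Sigma_{\bar\lambda+\delta}$, contradicting the definition of $\bar\lambda$. Hence $\bar\lambda=+\infty$ and $\partial_{x_N}u\ge0$.

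\emph{Strict monotonicity.} If $\frac{1}{t^\gamma}+f(t)>0$ and $p>\frac{2N+2}{N+2}$, apply the strong maximum principle from \cite{MR1648566,MR3752525} (or its singular variant) to the non-negative supersolution $\partial_{x_N}u$ of a linearized equation on any subdomain where $u$ is $C^{1,\alpha}$; if $\partial_{x_N}u$ vanished at an interior point it would vanish in a neighborhood, contradicting the positivity of the right-hand side. The principal obstacle throughout is the interplay between the singular source $u^{-\gamma}$ (which blows up $|\nabla u|$ when $\gamma\ge1$) and the gradient-dependent term $\vartheta|\nabla u|^q$; the narrow-strip absorption only closes because the sharp boundary asymptotics supplied by Theorem \ref{th:boundary_behavior} provide integrability of the right weighted powers of $|\nabla u|$.
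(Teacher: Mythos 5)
Your overall strategy (moving planes in $x_N$, starting strip from Theorem~\ref{th:boundary_behavior}, weak comparison, propagation) is the right one, but the central step is carried out in a way that does not work, and the key trick from the paper's proof is missing.

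You propose to run the narrow-strip weak comparison over the whole strip $\Sigma_\lambda$, testing against $(u-u_\lambda)^+\psi_R$ and absorbing the error terms via Poincar\'e, while controlling the blow-up of $|\nabla u|$ near $\partial\mathbb{R}^N_+$ through the quantitative asymptotics of Theorem~\ref{th:boundary_behavior}. This cannot be closed as stated. When $\gamma>1$ one has $|\nabla u|\sim x_N^{(1-\gamma)/(\gamma+p-1)}$, and a direct computation shows $\int_{\Sigma_\lambda}|\nabla u|^p$ diverges whenever $\gamma\ge(2p-1)/(p-1)$; the solution only belongs to $W^{1,p}_{\rm loc}(\mathbb{R}^N_+)$, not $W^{1,p}$ of a strip, and your proposal tacitly needs the latter. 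Moreover, for $1<p<2$ the coercivity weight $(|\nabla u|+|\nabla u_\lambda|)^{p-2}$ tends to $0$ where the gradient blows up, so the absorption inequality degenerates precisely near $\partial\mathbb{R}^N_+$ rather than helping there. The statement that the weighted Poincar\'e constants vanish ``because the blow-up rate is integrable against the shrinking vertical extent'' is asserted, not proved, and is false for large~$\gamma$.

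The paper sidesteps all of this with a simple but crucial reduction. Using Theorem~\ref{th:boundary_bound} and Lemma~\ref{lem:global_lower_bound}, one produces a small $\tilde\lambda>0$ with $\sup_{\Sigma_{\tilde\lambda}}u<\inf_{\mathbb{R}^N_+\setminus\Sigma_{\bar\lambda}}u$. For $\lambda$ close to $\bar\lambda$ and $x\in\Sigma_{\tilde\lambda}$ one then has $u(x)<u_\lambda(x)$ \emph{automatically}, because the reflected point lies in $\mathbb{R}^N_+\setminus\Sigma_{\bar\lambda}$. Thus the comparison argument only has to be run on $\Sigma_\lambda\setminus\Sigma_{\tilde\lambda}$, where Lemma~\ref{lem:global_lower_bound} gives a positive lower bound for $u$ and the boundedness-in-strips hypothesis gives an upper bound; consequently $g(u)$, $g(u_\lambda)$ are bounded and, by interior gradient estimates, $|\nabla u|$ and $|\nabla u_\lambda|$ are bounded there as well. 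At that point the weak comparison principles for bounded-gradient data from \cite{MR3303939,MR3752525,MR3118616,MR4439897} apply verbatim. The singularity and gradient blow-up near the boundary are never touched by the test-function machinery. This reduction---absent from your write-up---is what makes the proof go through, and without it (or an equally effective substitute) the narrow-strip absorption step fails.
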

	
	Next, we provide a nonexistence result for the pure singular problem
	\begin{equation}\label{pure}
		\begin{cases}
			-\Delta_p u = \frac{1}{u^\gamma} &\text{ in } \mathbb{R}^N_+,\\
			u>0 &\text{ in } \mathbb{R}^N_+,\\
			u=0 &\text{ on } \partial\mathbb{R}^N_+.
		\end{cases}
	\end{equation}
	
	Problem \eqref{pure} has one-dimensional symmetry solutions when $\gamma>1$ (see \cite[Theorem 1.2]{MR4044739}). Moreover, all $x_N$-sublinear solutions which are bounded in a strip are classified in \cite[Theorem 8]{2024arXiv240919557L} when $\gamma>1$. A Liouville-type theorem was proved in the same reference for the case $0<\gamma< 1$. However, the case $\gamma=1$ is more delicate and has not been considered there yet. With the aid of new estimates in this paper, we can provide a Liouville result for the borderline case $\gamma=1$ as follows.
	
	\begin{theorem}\label{th:nonexitence} Problem \eqref{pure} with $1 < p < N$ and $\gamma = 1$ has no solution $u \in W^{1,p}_{\rm loc}(\mathbb{R}^N_+)$ with $u\in L^\infty(\Sigma_{\overline\lambda})$ for some $\overline\lambda>0$.
	\end{theorem}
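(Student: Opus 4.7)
The plan is to argue by contradiction. Supposing $u$ is a solution, I would reduce the problem to a one-dimensional ODE which admits no positive globally increasing profile on $(0,\infty)$.

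First I would apply Theorem~\ref{th:boundary_behavior}(ii). It provides $u\in C^{1,\alpha}_{\rm loc}(\mathbb{R}^N_+)\cap C^s_{\rm loc}(\overline{\mathbb{R}^N_+})$ and the two-sided estimate $c_1(1-\ln x_N)^{1/p}\le \partial_N u\le c_2(1-\ln x_N)^{1/p}$ in a strip $\Sigma_{\lambda_0}$. Integrating in $x_N$ gives the matching boundary asymptotic $u(x',x_N)\asymp x_N(1-\ln x_N)^{1/p}$, so $u$ is strictly increasing in $x_N$ throughout $\Sigma_{\lambda_0}$, giving the starting configuration for the moving plane procedure.

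Next I would run the moving plane method in the $x_N$-direction, in the spirit of Theorem~\ref{th:monotonicity}, to obtain $\partial_N u\ge 0$ throughout $\mathbb{R}^N_+$. Since Theorem~\ref{th:monotonicity} requires $u\in L^\infty(\Sigma_\lambda)$ for every $\lambda>0$ while only boundedness on $\Sigma_{\overline\lambda}$ is assumed, a preliminary step is to upgrade this: because $-\Delta_p u=1/u>0$ makes $u$ strictly $p$-superharmonic with zero boundary data, comparison with a supersolution built from the uniform bound on $\{x_N=\overline\lambda\}$, combined with standard local regularity, propagates the $L^\infty$ control to arbitrary strips. Horizontal moving planes in each of the directions $x_1,\dots,x_{N-1}$ then force $u$ to be translation invariant in $x'$, yielding $u=u(x_N)$.

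The reduced ODE is $-(|u'|^{p-2}u')'=1/u$ on $(0,\infty)$ with $u(0^+)=0$ and $u'>0$. Multiplying by $u'$ and integrating gives the conservation law
\[
\frac{p-1}{p}(u'(t))^p+\ln u(t)\equiv E
\]
for a finite constant $E$ (finiteness is automatic since $u(t),u'(t)$ are finite for each $t>0$). Equivalently $(u'(t))^p=\frac{p}{p-1}(E-\ln u(t))$, which forces $u(t)\le e^E$ everywhere and $u'(t)=0$ precisely when $u(t)=e^E$. Since $u$ starts at $0$ with $u'(0^+)=+\infty$ and is increasing, the profile must reach $e^E$ at some finite $T>0$; at $T$ the auxiliary quantity $w:=|u'|^{p-2}u'$ vanishes while $w'=-1/u<0$, so $w$ becomes negative and $u$ must strictly decrease beyond $T$, contradicting $u>0$ on $(T,\infty)$.

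The main obstacle will be the two moving plane arguments together with the extension of the $L^\infty$ bound from a single strip to all strips. Both steps demand uniform control in $x'$ of $u$ and of $|\nabla u|$ despite the gradient blow-up at the boundary, and the horizontal step additionally requires asymptotic Hopf-type input as $|x'|\to\infty$. Once the 1D symmetry is secured, closing the contradiction via the conservation law is elementary.
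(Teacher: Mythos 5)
Your route — upgrade the $L^\infty$ bound to every strip, obtain monotonicity in $x_N$, then use horizontal moving planes to force $u=u(x_N)$, and finally analyze the resulting ODE — is genuinely different from the paper's, but it has serious gaps that the paper's argument is specifically designed to avoid.

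The paper never proves one-dimensional symmetry. Instead it invokes global two-sided linear bounds $c_1 x_N \le u$ in $\mathbb{R}^N_+$ and $u\le c_2 x_N$ in $\mathbb{R}^N_+\setminus\Sigma_{\overline\lambda}$ (Lemmas~17 and~24 of \cite{2024arXiv240919557L}, which is where the restriction $1<p<N$ enters), rescales $w_n(x)=u(nx)/n$, and extracts a $C^{1,\alpha'}_{\rm loc}$ limit $w_\infty$ that is squeezed globally between $c_1 x_N$ and $c_2 x_N$ and still solves $-\Delta_p w_\infty=1/w_\infty$. Applying Theorem~\ref{th:boundary_bound}(ii) to $w_\infty$ then gives $w_\infty\ge C x_N(1-\ln x_N)^{1/p}$ near the boundary, contradicting $w_\infty\le c_2 x_N$. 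This sidesteps entirely the question of symmetry in $x'$.

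Concretely, the gaps in your proposal are:
\emph{(a)} The $L^\infty$ upgrade step is wrong as reasoned. The fact that $-\Delta_p u=1/u>0$ makes $u$ a $p$-\emph{super}solution, which yields a \emph{minimum} principle, hence lower bounds; it cannot by itself propagate an $L^\infty$ \emph{upper} bound from $\Sigma_{\overline\lambda}$ to wider strips. A comparison with a supersolution in the unbounded slab $\{x_N>\overline\lambda\}$ would also need control at spatial infinity that is not available. The actual global upper bound in the paper comes from a nontrivial external lemma that uses $p<N$.
\emph{(b)} Horizontal moving planes do not give translation invariance in $x'$; they give reflective symmetry about a single hyperplane, and only if the method runs to completion. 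To conclude $u=u(x_N)$ you would need to move planes from $-\infty$ to $+\infty$ in every horizontal direction, which requires asymptotic information on $u$ as $|x'|\to\infty$ that neither the hypotheses nor the rest of the paper provide. Note that even Theorem~\ref{th:monotonicity} only asserts monotonicity in $x_N$, not one-dimensional symmetry; the latter is a substantially harder statement in half-spaces.
\emph{(c)} Your argument nowhere uses $p<N$, which is a diagnostic sign that one of the above steps cannot hold unconditionally; the paper's proof makes the role of this restriction explicit. (Your final ODE step is essentially right once reached: instead of asserting $u$ hits $e^{E}$ in finite time, just note that $w:=(u')^{p-1}$ has $w'=-1/u\le -e^{-E}$ once $u$ is bounded, so $w\to-\infty$ linearly, contradicting $w\ge 0$. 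But that part is the easy end of the argument.)
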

	
	We stress that all above results assume that $u$ is bounded in some strip. Actually, some of our result can be formulated in a weaker form for solutions without this assumption. For $R>0$, we set
	\[
	\mathcal{C}_R = \{x:=(x',x_N)\in\mathbb{R}^N_+ \mid |x'|<R\}.
	\] We have the following local version of Theorem \ref{th:boundary_behavior}, which applies to solutions which may be unbounded in strips.
	\begin{theorem}\label{th:boundary_behavior'} Let $u \in W^{1,p}_{\rm loc}(\mathbb{R}^N_+)\cap L^\infty_{\rm loc}(\overline{\mathbb{R}^N_+})$ be a solution to problem \eqref{main}. Then the following assertions hold:
		\begin{enumerate}
			\item[(i)] When $\gamma>1$, we have $u \in C^{1,\alpha}_{\rm loc}(\mathbb{R}^N_+)\cap C^{\frac{p}{\gamma+p-1}}_{\rm loc}(\overline{\mathbb{R}^N_+})$ for some $\alpha\in(0,1)$. Moreover, for any $x_0\in\partial\mathbb{R}^N_+$ and $\eta_0 \in \mathbb{S}^{N-1}$,
			\[
			\lim_{\substack{\mathbb{R}^N_+\ni x\to x_0\\\eta\to\eta_0}} x_N^\frac{\gamma-1}{\gamma+p-1} \frac{\partial u(x)}{\partial \eta} = \left(\frac{(\gamma + p - 1)^p}{p^{p-1}(p-1)(\gamma - 1)}\right)^{\frac{1}{\gamma + p - 1}} \frac{p \langle\eta_0,e_N\rangle}{\gamma + p - 1}.
			\]
			Consequently, for every $R>0$ and $\beta\in(0,1)$, there exist $\lambda_0,c_1,c_2>0$ such that
			\[
			c_1 x_N^\frac{1-\gamma}{\gamma+p-1} < \frac{\partial u(x)}{\partial \eta} < c_2 x_N^\frac{1-\gamma}{\gamma+p-1} \quad\text{ in } \Sigma_{\lambda_0} \cap \mathcal{C}_R
			\]
			for all $\eta\in \mathbb{S}^{N-1}$ with $\langle\eta,e_N\rangle \ge \beta$.
			\item[(ii)] When $\gamma=1$, we have $u \in C^{1,\alpha}_{\rm loc}(\mathbb{R}^N_+) \cap C^s_{\rm loc}(\overline{\mathbb{R}^N_+})$ for some $\alpha\in(0,1)$ and all $s\in(0,1)$. Moreover, for every $R>0$ and  $\beta\in(0,1)$, there exist $0<\lambda_0<1$, $c_1,c_2>0$ such that
			\[
			c_1 \left(1-\ln x_N\right)^\frac{1}{p} \le \frac{\partial u(x)}{\partial \eta} \le c_2 \left(1-\ln x_N\right)^\frac{1}{p} \quad\text{ in } \Sigma_{\lambda_0} \cap \mathcal{C}_R
			\]
			for all $\eta\in \mathbb{S}^{N-1}$ with $\langle\eta,e_N\rangle \ge \beta$.
			\item[(iii)] When $0<\gamma<1$, we have $u \in C^{1,\alpha}_{\rm loc}(\overline{\mathbb{R}^N_+})$ for some $\alpha\in(0,1)$. Moreover, for every $R>0$ and  $\beta\in(0,1)$, there exist $\lambda_0,c_1,c_2>0$ such that
			\[
			c_1 < \frac{\partial u(x)}{\partial \eta} < c_2 \quad\text{ in } \left(\Sigma_{\lambda_0}\cup\partial\mathbb{R}^N_+\right) \cap \mathcal{C}_R
			\]
			for all $\eta\in \mathbb{S}^{N-1}$ with $\langle\eta,e_N\rangle \ge \beta$.
		\end{enumerate}
	\end{theorem}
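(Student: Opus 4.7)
The plan is to localize the proof of Theorem~\ref{th:boundary_behavior} to the cylindrical half-domain
\[
\Omega_{R,\lambda}:=\Sigma_\lambda\cap\mathcal{C}_R=\{x\in\mathbb{R}^N_+\mid|x'|<R,\,0<x_N<\lambda\},
\]
using the hypothesis $u\in L^\infty_{\rm loc}(\overline{\mathbb{R}^N_+})$ in place of the global strip bound. The interior claims ($C^{1,\alpha}_{\rm loc}(\mathbb{R}^N_+)$ and the pointwise limit/two-sided bounds on compact subsets away from $\partial\mathbb{R}^N_+$) are intrinsically local, so the corresponding portion of the proof of Theorem~\ref{th:boundary_behavior} should carry over verbatim: on any compact $K\subset\mathbb{R}^N_+$ the essential infimum of $u$ is positive, hence $\frac{1}{u^\gamma}+f(u)$ is bounded and the standard regularity theory for $p$-Laplacian equations (Lieberman, DiBenedetto) supplies the $C^{1,\alpha}$ estimate.

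For the boundary behavior I would fix $R>0$, pick an auxiliary $R'>R$, and set $M:=\|u\|_{L^\infty(\Omega_{R',\overline\lambda})}<\infty$ thanks to the local $L^\infty$ hypothesis. The same one-dimensional sub- and supersolutions $\underline v(x_N),\overline v(x_N)$ that drive Theorem~\ref{th:boundary_behavior}, of order $x_N^{p/(\gamma+p-1)}$ when $\gamma>1$, $x_N(1-\ln x_N)^{1/p}$ when $\gamma=1$, and linear when $0<\gamma<1$, can be written down on $\Omega_{R',\lambda}$ for $\lambda$ small. To invoke the weak comparison principle on $\Omega_{R',\lambda}$ I need three boundary ingredients: on $\partial\mathbb{R}^N_+\cap\mathcal{C}_{R'}$ both the barrier and $u$ vanish; on $\{x_N=\lambda\}\cap\mathcal{C}_{R'}$ the inequality is enforced by choosing the multiplicative constant in the barrier large/small compared to $M$; on the lateral wall $\{|x'|=R'\}\cap\Sigma_\lambda$ a cutoff construction modifies the barrier outside $\{|x'|<R\}$ so that it beats $M$ there while remaining an ordered sub/super-solution on the inner region. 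After comparison on $\Omega_{R',\lambda_0}$, the two-sided bounds hold on $\Omega_{R,\lambda_0}$. The precise limit in case~(i) is then recovered by a blow-up argument: for $x_0\in\partial\mathbb{R}^N_+$ and $x^{(k)}\to x_0$, the rescaled functions $u_k(y):=r_k^{-p/(\gamma+p-1)}u(x^{(k)}+r_k y)$ with $r_k:=x^{(k)}_N$ are equi-bounded and equi-$C^{1,\alpha}$ on compacts of $\mathbb{R}^N_+$ by the just-obtained local bounds, hence converge along a subsequence to the unique nontrivial one-dimensional profile, whose derivative at the image of the limit point yields the claimed value. The Hölder regularity up to the boundary follows by integrating the derivative bounds along vertical segments and combining with interior Hölder regularity.

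The hard part is the lateral comparison on $\{|x'|=R'\}\cap\Sigma_\lambda$. In the global proof the barriers depend only on $x_N$, and simply shifting such a barrier by an additive constant is incompatible with the singular right-hand side (for the subsolution one would have to decrease by a constant, making the barrier negative near $\partial\mathbb{R}^N_+$), while the gradient term $\vartheta|\nabla u|^q$ does not react to additive constants but does react to multiplicative ones. The remedy I envisage is a product-type barrier $v(x_N)\psi(x')$ with $\psi$ a smooth radial cutoff equal to $1$ on $\{|x'|<R\}$ and transitioning to a large (resp.\ small) value near $\{|x'|=R'\}$, carefully checking that the additional terms generated by derivatives of $\psi$ (both from the $p$-Laplacian and from $\vartheta|\nabla u|^q$) can be absorbed into a slight perturbation of the one-dimensional profile constants. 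Once this localization step is in place, the remainder of the argument is a direct transcription of the proof of Theorem~\ref{th:boundary_behavior}.
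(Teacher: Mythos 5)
The interior regularity and the blow-up portion of your plan are essentially the paper's argument (the only adjustment needed there is that the rescaled bounds for the blow-up sequence hold in $\left(\Sigma_A\setminus\Sigma_a\right)\cap\mathcal{C}_{1/\varepsilon_n}$, which still exhausts $\mathbb{R}^N_+$ and suffices to pass to the limit and invoke the classification of the limit profile). The genuine gap is precisely the step you yourself flag as the hard part: the comparison on the cylinder $\Sigma_\lambda\cap\mathcal{C}_{R'}$ with a product barrier $v(x_N)\psi(x')$ is only ``envisaged'', never verified. Note first that the barriers in the proof of Theorem \ref{th:boundary_bound} are not one-dimensional profiles in $x_N$; they are built from first eigenfunctions on an annulus (upper bound) and on a ball (lower bound), and the comparison tool is Lemma \ref{lem:wcp}, which for the gradient term requires the specific structure $-\Delta_p u_1+\vartheta|\nabla u_1|^p+\vartheta\le c/u_1^{\gamma}$ together with the smallness condition $u_1\le(c/\vartheta)^{1/\gamma}$. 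Your sketch does not check that the cutoff-modified barrier satisfies either requirement. Moreover, for the lower bound the cutoff cannot merely make the barrier ``small'' on the lateral wall: to have the barrier below $u$ there for all small $x_N$ you would need a positive lower bound for $u$ of the very type you are proving (circular), so $\psi$ must vanish at $|x'|=R'$, and then the subsolution inequality must be verified near the edge where the wall meets $\partial\mathbb{R}^N_+$, where $v\psi\to0$ and the terms generated by $\nabla\psi$ interact with the singular right-hand side and with $\vartheta|\nabla(v\psi)|^p$. None of this is carried out, so the central localization step is missing.

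More importantly, this step is unnecessary, and the paper's proof of Theorem \ref{th:boundary_bound'} avoids it entirely: the comparisons in Theorem \ref{th:boundary_bound} are already performed on sets of bounded diameter attached to each near-boundary point $y$, namely the lens $B_{2\overline\lambda}(\hat y)\cap\Sigma_{\overline\lambda}$ for the upper bound and the ball $B_{\lambda_0}(\hat y)$ tangent to $\partial\mathbb{R}^N_+$ for the lower bound; on the spherical part of the boundary the barrier is arranged to exceed the constant $L$, and on $\{x_N=0\}$ one uses the Dirichlet condition, so no lateral wall ever appears. Hence the localization consists only of replacing $L=\sup_{\Sigma_{\overline\lambda}}u$ by $\esssup_{\Sigma_{\overline\lambda}\cap\mathcal{C}_{R+2\overline\lambda}}u$ (finite since $u\in L^\infty_{\rm loc}(\overline{\mathbb{R}^N_+})$), choosing $M$ over that set, and restricting $y\in\Sigma_{\overline\lambda}\cap\mathcal{C}_R$ so that each comparison set stays inside $\Sigma_{\overline\lambda}\cap\mathcal{C}_{R+2\overline\lambda}$. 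If you rewrite your argument this way, the rest of your proposal goes through as in Theorem \ref{th:boundary_behavior}; as written, however, the lateral-wall construction is an unproven (and avoidable) key step.
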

	
	With Theorem \ref{th:boundary_behavior'} in force, we can proceed as in \cite{MR2654242,MR4290571,MR4876247}, exploiting the rotating and moving lines technique to prove the monotonicity of solutions in dimension two without the assumption on their boundedness in strips. The following result improves Theorem \ref{th:monotonicity} in dimension two.
	\begin{theorem}\label{th:monotonicity'}
		Assume that one of the following holds:
		\begin{itemize}
			\item[(i)] $\frac{3}{2} < p \le 2$, $1< q\le p$ and $\{t\in(0,+\infty) \mid \frac{1}{t^\gamma} + f(t)=0\}$ is a discrete set,
			\item[(ii)] $p>2$, $p-1\le q\le p$ and $\frac{1}{t^\gamma} + f(t)>0$ for $t>0$.
		\end{itemize}
		Let $u \in W^{1,p}_{\rm loc}(\mathbb{R}^2_+)\cap L^\infty_{\rm loc}(\overline{\mathbb{R}^2_+})$ be a solution to the problem
		\[
		\begin{cases}
			-\Delta_p u + \vartheta |\nabla u|^q = \frac{1}{u^\gamma} + f(u) &\text{ in } \mathbb{R}^2_+,\\
			u>0 &\text{ in } \mathbb{R}^2_+,\\
			u=0 &\text{ on } \partial\mathbb{R}^2_+.
		\end{cases}
		\]
		Then $u$ is monotone increasing in $x_2$-direction, that is,
		\[
		\frac{\partial u}{\partial x_2} \ge 0 \quad\text{ in } \mathbb{R}^2_+.
		\]
		Moreover,
		\[
		\frac{\partial u}{\partial x_2} > 0 \quad\text{ in } \mathbb{R}^2_+ \setminus \left\{ x \in \mathbb{R}^2_+ \mid \frac{1}{u^\gamma} + f(u)=0\right\}.
		\]
	\end{theorem}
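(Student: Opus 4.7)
The plan is to combine the local boundary estimates of Theorem~\ref{th:boundary_behavior'} with the rotating and moving lines technique of \cite{MR2654242,MR4290571,MR4876247}, so as to bypass the global $L^\infty(\Sigma_\lambda)$ hypothesis required in Theorem~\ref{th:monotonicity}. Under the standing assumptions (i) or (ii), Theorem~\ref{th:boundary_behavior'} already provides $u\in C^{1,\alpha}_{\rm loc}(\mathbb{R}^2_+)$ together with uniform two-sided estimates on $\partial u/\partial\eta$ in every $\Sigma_{\lambda_0}\cap\mathcal{C}_R$, for all directions $\eta$ satisfying $\langle\eta,e_2\rangle\ge\beta$. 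This yields the initial step of strict $x_2$-monotonicity in a thin slab above $\partial\mathbb{R}^2_+$ on each bounded cylinder and will serve as the base of the moving lines procedure.

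I would then set $u_\lambda(x_1,x_2):=u(x_1,2\lambda-x_2)$ and $w_\lambda:=u_\lambda-u$, and aim to prove $w_\lambda\le 0$ in $\Sigma_\lambda\cap\mathcal{C}_R$ for every $\lambda>0$ and $R>0$. On such a bounded set the argument mimics the proof of Theorem~\ref{th:monotonicity}: test the difference of the weak equations with $w_\lambda^+\psi_R^2$, where $\psi_R$ is a standard cut-off in $x_1$; absorb the gradient contribution $\vartheta(|\nabla u_\lambda|^q-|\nabla u|^q)$ by means of the structural restrictions on $q$; and invoke the monotonicity inequalities for the $p$-Laplacian in the form of the weighted Sobolev spaces of \cite{MR2096703} when $p>2$. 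Because $u$ is now only locally bounded up to $\partial\mathbb{R}^2_+$, the cut-off remainder supported in $\mathcal{C}_{2R}\setminus\mathcal{C}_R$ is no longer automatically small, and this is precisely the obstacle that forces one to use the rotating-lines device.

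The rotating-lines step is the main novelty and, in my view, the hardest part. For each small angle $\alpha$ one considers the half-plane obtained by rotating $\mathbb{R}^2_+$ around a point $P_0\in\partial\mathbb{R}^2_+$ by $\alpha$, and runs the moving-plane scheme of the previous paragraph in the rotated direction $\nu_\alpha$. The uniform estimates of Theorem~\ref{th:boundary_behavior'} along all $\eta$ with $\langle\eta,e_2\rangle\ge\beta$ make the base step available for every sufficiently small $\alpha$. The two-dimensional geometry is essential here: after reflection across a tilted line, the "extra'' region has bounded width as $|x_1|\to\infty$, which converts the cut-off remainder into a quantity controllable by the boundary bounds; moreover, the one-parameter family of admissible angles is connected, which is what allows a continuation argument. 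Letting $\alpha\to 0$ along this family one obtains $\partial u/\partial\nu_\alpha\ge 0$ for all $\alpha$ in a neighbourhood of $0$, and hence $\partial_{x_2}u\ge 0$ in all of $\mathbb{R}^2_+$. The strict inequality off the zero set of $1/u^\gamma+f(u)$ then follows from the strong comparison principle applied to the linearized equation for $\partial_{x_2}u$, exactly as in \cite{MR3303939,MR3752525}.
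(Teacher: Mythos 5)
Your high-level plan is aligned with the paper's: use the local boundary estimates of Theorem~\ref{th:boundary_behavior'} to get a starting slab of strict monotonicity and then run a two-dimensional moving-lines argument, citing the right references (\cite{MR4876247} in particular). However, the mechanism you describe is not the one that actually makes this work, and two of the specific steps as written would fail.

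First, the cut-off $\psi_R$ in $x_1$ does not help. Testing with $w_\lambda^+\psi_R^2$ leaves a remainder supported in $\mathcal{C}_{2R}\setminus\mathcal{C}_R$, and since $u$ is only in $L^\infty_{\rm loc}(\overline{\mathbb{R}^2_+})$, nothing in the hypotheses controls that remainder as $R\to\infty$. The way the paper avoids this is not by taming the remainder but by never integrating over an unbounded set at all: the moving ``plane'' is replaced by a slanted line $L_{x_0,s,\theta}$, and the comparison domain is the compact triangle $\mathcal{T}_{x_0,s,\theta}$ bounded by $L_{x_0,s,\theta}$, the vertical line $\{x_0\}\times\mathbb{R}$ and $\partial\mathbb{R}^2_+$. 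Because this is a bounded set, Theorems~\ref{th:wcp_p<2} and~\ref{th:wcp_p>2} apply directly with no cut-off. Your alternative picture (``rotate the half-plane around $P_0$ and move planes in the rotated direction'') is not consistent with the fixed Dirichlet boundary on $\partial\mathbb{R}^2_+$, and the accompanying assertion that the region between a tilted line and $\partial\mathbb{R}^2_+$ ``has bounded width as $|x_1|\to\infty$'' is false: on one side the wedge opens linearly, on the other it closes to a point. The vertical truncation to a triangle, with the vertical side lying on a line where strict inequality $u<u_{x_1,s,\theta}$ holds, is the indispensable ingredient, and it is missing from your argument.

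Second, the final strict-inequality step as you state it is incorrect in case~(i). With $\frac{3}{2}<p\le 2$ the nonlinearity $g(t)=t^{-\gamma}+f(t)$ is allowed to change sign, and the strong maximum principle for the linearized operator (invoked in \cite{MR3303939,MR3752525}) fails for sign-changing $g$. This is exactly why the paper needs the substitute Lemmas~\ref{lem:scp1} and~\ref{lem:scp2} (Lemmas 11--12 of \cite{MR4876247}), which deliver a weaker but sufficient conclusion; and even these must be rejustified near $\partial\mathbb{R}^2_+$ because $g$ is not locally Lipschitz at $t=0$ (the paper does this via the strict positivity coming from Theorem~\ref{th:boundary_behavior'}). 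Your proof does not address the sign-changing issue nor the non-Lipschitz singularity at the boundary, so the concluding $\partial u/\partial x_2>0$ off the zero set of $g(u)$ is not justified under assumption~(i) as written.
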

	
	The rest of this paper is devoted to the proofs of the above theorems. In Section 2, we prove sharp bounds for solutions near the boundary via comparison arguments. Then we derive bounds on directional derivatives, namely Theorem \ref{th:boundary_behavior}, by means of the scaling technique. In Section 3, we exploit the moving plane method to derive the monotonicity of solutions stated in Theorem \ref{th:monotonicity}. We also use the scaling technique to prove the nonexistence result in Theorem \ref{th:nonexitence}. In Section 4, we deal with solutions which may be unbounded in every strip. We demonstrate how to derive Theorem \ref{th:boundary_behavior'} by modifying the proof of Theorem \ref{th:boundary_behavior}. Then we provide a proof for Theorem \ref{th:monotonicity'}.
	
	\section{Boundary behavior of solutions}
	In this section, we study behavior of solutions to \eqref{main} with are bounded in some strip.
	\subsection{Sharp bounds for solutions near the boundary}
	The aim of this subsection is the following.
	\begin{theorem}\label{th:boundary_bound}
		Let $u \in W^{1,p}_{\rm loc}(\mathbb{R}^N_+)$ be a solution to problem \eqref{main} with $u\in L^\infty(\Sigma_{\overline\lambda})$ for some $\overline\lambda>0$. Then $u\in C^{1,\alpha}_{\rm loc}(\mathbb{R}^N_+)\cap C(\overline{\mathbb{R}^N_+})$ for some $0<\alpha<1$. Moreover, the following assertions hold:
		\begin{enumerate}
			\item[(i)] When $\gamma>1$, there exist $\lambda_0,c_1,c_2>0$ such that
			\[
			c_1 x_N^\frac{p}{\gamma+p-1} \le u(x) \le c_2 x_N^\frac{p}{\gamma+p-1} \quad\text{ in } \Sigma_{\lambda_0}.
			\]
			\item[(ii)] When $\gamma=1$, there exist $0<\lambda_0<1$, $c_1,c_2>0$ such that
			\[
			c_1 x_N \left(1-\ln x_N\right)^\frac{1}{p} \le u(x) \le c_2 x_N \left(1-\ln x_N\right)^\frac{1}{p} \quad\text{ in } \Sigma_{\lambda_0}.
			\]
			\item[(iii)] When $0<\gamma<1$, there exist $\lambda_0,c_1,c_2>0$ such that
			\[
			c_1 x_N \le u(x) \le c_2 x_N \quad\text{ in } \Sigma_{\lambda_0}.
			\]
		\end{enumerate}
	\end{theorem}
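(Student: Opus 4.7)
The strategy is to control $u$ by one-dimensional barriers whose profile is dictated by the formal ODE balance $-(|w'|^{p-2}w')' \asymp w^{-\gamma}$. Substituting the ansatz $w(t)=c\,t^\alpha$ and matching powers yields $\alpha = \frac{p}{\gamma+p-1}$, which is less than, equal to, or greater than $1$ according as $\gamma>1$, $\gamma=1$, or $0<\gamma<1$. The corresponding barrier profiles are $c\,x_N^{p/(\gamma+p-1)}$ in case (i); the logarithmically corrected $c\,x_N(1-\ln x_N)^{1/p}$ in the borderline case (ii), since the pure power ansatz is resonant when $\alpha=1$; and a linear profile in case (iii), where the source is integrable enough at the boundary that solutions exhibit Hopf-type linear behavior. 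In case (iii) the precise barrier must incorporate a higher-order correction to absorb the still-singular right-hand side, and an explicit choice is given by the one-dimensional solution of $-(|\phi'|^{p-2}\phi')' = \phi^{-\gamma}$ with $\phi(0)=0$, which is $C^1$ up to $0$ with $\phi'(0)>0$.

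Interior $C^{1,\alpha}_{\rm loc}$ regularity will be immediate from DiBenedetto--Tolksdorf theory on compact subsets of $\mathbb{R}^N_+$, since $\essinf_K u>0$ makes $u^{-\gamma}$ locally bounded, $f$ is locally Lipschitz, and the gradient term with $q\le p$ fits the standard structural hypotheses. For each regime I would verify by direct computation that the upper profile with a large constant $C$ gives a supersolution of the full equation in a narrow strip $\Sigma_{\lambda_0}$, and that the lower profile with a small constant $c$ gives a subsolution. The key observation is that the principal terms $-\Delta_p \bar w$ and $\bar w^{-\gamma}$ carry the same singular order at $x_N=0$, so the differential inequality reduces to a scalar balance in the constants; the gradient contribution $\vartheta|\bar w'|^q$ has strictly weaker singularity thanks to $q\le p$ (the exponent gap being $\gamma(p-q)+q>0$), and $f(\bar w)$ stays bounded near the boundary. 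Shrinking $\lambda_0$ absorbs these lower-order perturbations.

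The bounds are then concluded by a weak comparison principle on $\Sigma_{\lambda_0}$, tested against $(u-\bar w)^+$ and $(\underline w - u)^+$ respectively. To accommodate the non-Lipschitz source $u^{-\gamma}$ I would desingularize by replacing $u$ with $u+\varepsilon$, perform the comparison, and let $\varepsilon\to 0^+$ in the spirit of the Lazer--McKenna scheme as adapted in \cite{2025arXiv250807360L}. The supersolution side is routine: the $L^\infty$ bound on $u$ in $\Sigma_{\overline\lambda}$ yields $u\le \bar w$ on $\{x_N=\lambda_0\}$ once $C$ is enlarged. The main obstacle is the subsolution side, namely a lower bound $u\ge\underline w$ on $\{x_N=\lambda_0\}$ that is uniform in $x'\in\mathbb{R}^{N-1}$, because a single interior Harnack estimate is not enough when the tangential directions are unbounded. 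This is resolved by translation invariance: for each $y'\in\mathbb{R}^{N-1}$ the translate $u(\cdot+y',\cdot)$ solves the same problem with the same $L^\infty$ bound on $\Sigma_{\overline\lambda}$, so a single weak Harnack inequality applied to this translated family in a fixed ball centered at height $\lambda_0$ produces a tangentially uniform positive lower bound. The gradient term forces the use of a Damascelli-type weak comparison rather than the classical one, but because $|\nabla\cdot|^q$ is of strictly lower singular order near the boundary, the argument closes; continuity up to $\partial\mathbb{R}^N_+$ and the stated upper bound then follow from $\underline w\le u\le\bar w$ and $\bar w\to 0$ as $x_N\to 0^+$.
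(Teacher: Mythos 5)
Your barrier profiles, the scalar balance fixing the constants, and the overall "sub/supersolution plus weak comparison in a thin strip" strategy coincide with the paper's (which uses translated eigenfunction-based barriers $s\phi_1^{p/(\gamma+p-1)}$, $s\phi_1(1-\ln\phi_1)^{1/p}$, $s\tilde\phi_1$ on sliding annuli and balls rather than flat one-dimensional barriers on the whole strip). However, there is a genuine gap at the decisive step, the comparison for the \emph{lower} bound in the presence of the gradient term. Your justification --- that $\vartheta|\nabla\cdot|^q$ is of strictly lower singular order near the boundary --- is valid for the barrier, but the comparison inequality also involves $\vartheta|\nabla u|^q$, and at this stage nothing is known about $|\nabla u|$ near $\partial\mathbb{R}^N_+$ (when $\gamma\ge1$ it in fact blows up); invoking that the gradient contribution is lower order for $u$ is circular, since the boundary rate of $u$ is exactly what is being proved. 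Moreover, a Damascelli-type weak comparison principle requires $C^1$ bounds on both functions up to the boundary of the comparison domain, which are unavailable here, and it is not designed for the singular right-hand side $t^{-\gamma}$ either. The paper circumvents this with a structural trick you do not have: since $0<q\le p$, one writes $|\nabla u|^q\le|\nabla u|^p+1$, so that $u$ becomes a supersolution of $-\Delta_p(\cdot)+\vartheta|\nabla\cdot|^p+\vartheta=\frac{c}{(\cdot)^\gamma}$, and then applies the tailored comparison principle (Lemma \ref{lem:wcp}) for precisely this operator, which in addition \emph{requires the smallness constraint} $u_1\le(c/\vartheta)^{1/\gamma}$ on the subsolution --- this is why the barrier constant $s$ must be chosen small, a requirement your proposal never identifies because it never identifies the right comparison tool. (For the upper bound the issue disappears since $\vartheta|\nabla u|^q\ge0$ can simply be dropped, reducing to the $\vartheta=0$ comparison; you do not make this sign argument explicit, but it is minor.)

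A secondary, repairable issue: you propose to test with $(u-\bar w)^+$ and $(\underline w-u)^+$ directly on the unbounded strip $\Sigma_{\lambda_0}$, which requires an admissibility/cut-off argument in the tangential variables with control of the error terms (not available from $u\in L^\infty(\Sigma_{\overline\lambda})$ alone, since no global gradient bound in the strip is known). The paper avoids this entirely by running the comparison on a translated family of \emph{bounded} domains ($B_{2\overline\lambda}(\hat y)\cap\Sigma_{\overline\lambda}$ for the upper bound, $B_{\lambda_0}(\hat y)$ for the lower bound), exploiting translation invariance in $x'$; your translation-plus-Harnack idea for the uniform lower datum at height $\lambda_0$ is in the same spirit and could be kept, but the comparison itself should be localized in this way. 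Also note that the correct ordering on the portion of $\{x_N=0\}$ in the comparison region must be taken in the weak sense of the Dirichlet condition, since continuity of $u$ up to the boundary is part of the conclusion, not a hypothesis.
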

	
	We list here two main tools we will exploit to prove Theorem \ref{th:boundary_bound}.
	We recall the following comparison principle from \cite{2025arXiv250807360L} (see also \cite{MR4044739} for the case $\vartheta=0$).
	\begin{lemma}[Lemma 9 in \cite{2025arXiv250807360L}]\label{lem:wcp}
		Let $c>0$ and $\Omega$ be a domain of $\mathbb{R}^N$. Let $u_1,u_2\in C^{1,\alpha}_{\rm loc}(\Omega) \cap L^\infty(\Omega)$ be such that
		\[
		\begin{cases}
			-\Delta_p u_1 + \vartheta |\nabla u_1|^p + \vartheta \le \frac{c}{u_1^\gamma} &\text{ in }  \Omega,\\
			-\Delta_p u_2 + \vartheta |\nabla u_2|^p + \vartheta \ge \frac{c}{u_2^\gamma} &\text{ in }  \Omega,\\
			u_1,u_2 > 0 &\text{ in }  \Omega,\\
			u_1 \le u_2 &\text{ on } \partial \Omega.
		\end{cases}
		\]
		If $\vartheta>0$, we further assume
		\[
		u_1 \le \left( \frac{c}{\vartheta} \right)^{1/\gamma} \quad\text{ in } \Omega.
		\]
		Then $u_1 \le u_2$ in $ \Omega$.
	\end{lemma}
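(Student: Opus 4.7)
The plan is to reduce to a standard weak comparison principle for the pure $p$-Laplacian by eliminating the gradient term $\vartheta|\nabla u|^p$ via a Hopf--Cole change of variable.

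First I introduce $\Phi\colon[0,\infty)\to\mathbb{R}$ determined by $\Phi'(s)=e^{-\vartheta s/(p-1)}$ (reducing to the identity when $\vartheta=0$). Since $(p-1)(\Phi'(s))^{p-2}\Phi''(s)=-\vartheta(\Phi'(s))^{p-1}$, setting $v_i:=\Phi(u_i)$ and computing formally gives
\[
-\Delta_p v_i = (\Phi'(u_i))^{p-1}\bigl(-\Delta_p u_i + \vartheta|\nabla u_i|^p\bigr);
\]
this identity is made rigorous in the weak sense by integrating by parts and using the admissible test function $(\Phi'(u_i))^{p-1}\tilde{\psi}$ (with $\tilde{\psi}\ge 0$ in $W^{1,p}_0(\Omega)$) in the weak formulation of the two hypothesized inequalities. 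Multiplying through by $(\Phi'(u_i))^{p-1}=e^{-\vartheta u_i}$ in this way converts them into
\[
-\Delta_p v_1\le g(u_1), \qquad -\Delta_p v_2\ge g(u_2),
\]
where $g(s):=e^{-\vartheta s}\bigl(c/s^\gamma-\vartheta\bigr)$.

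Next I check that $g$ is non-increasing on the range where comparison will be needed. A direct differentiation gives
\[
g'(s)=-e^{-\vartheta s}\left[\frac{c(\vartheta s+\gamma)}{s^{\gamma+1}}-\vartheta^2\right].
\]
For $\vartheta=0$ the bracket reduces to $c\gamma/s^{\gamma+1}>0$. For $\vartheta>0$ and $s\le(c/\vartheta)^{1/\gamma}$ we have $\vartheta s^\gamma\le c$, hence $\vartheta^2 s^{\gamma+1}\le c\vartheta s\le c(\vartheta s+\gamma)$, so the bracket is nonnegative; in either case $g'(s)\le 0$.

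The comparison step is now standard. On the set $\{u_1>u_2\}$ one has $0<u_2<u_1\le(c/\vartheta)^{1/\gamma}$ (trivially when $\vartheta=0$), so $g(u_1)\le g(u_2)$ by the previous step. Since $\Phi$ is strictly increasing, $\{u_1>u_2\}=\{v_1>v_2\}$ and the boundary condition transfers to $v_1\le v_2$ on $\partial\Omega$, which makes $(v_1-v_2)^+$ admissible in the weak formulation (if $\Omega$ is unbounded, a standard cut-off in the $x$-variable is used and the limit is passed by invoking the $L^\infty$ bound on $v_i$). Testing the subtracted inequality against $(v_1-v_2)^+$ yields
\[
\int_{\{v_1>v_2\}}\bigl(|\nabla v_1|^{p-2}\nabla v_1-|\nabla v_2|^{p-2}\nabla v_2\bigr)\cdot\nabla(v_1-v_2)\le\int_{\{v_1>v_2\}}\bigl(g(u_1)-g(u_2)\bigr)(v_1-v_2)\le 0.
\]
The strict monotonicity of the vector field $\xi\mapsto|\xi|^{p-2}\xi$ (valid for all $p>1$) forces $\nabla v_1=\nabla v_2$ a.e.\ on $\{v_1>v_2\}$, whence $\nabla(v_1-v_2)^+\equiv 0$ and therefore $(v_1-v_2)^+\equiv 0$; this gives $u_1\le u_2$ in $\Omega$. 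The key obstacle is the monotonicity of $g$ in the preceding paragraph: without the upper bound $u_1\le(c/\vartheta)^{1/\gamma}$ the function $g$ need not be monotone and the Hopf--Cole-plus-comparison scheme collapses, which also explains the role of the extra additive $\vartheta$ appearing on the left-hand side of both hypotheses.
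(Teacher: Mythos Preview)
The paper does not prove this lemma; it is quoted from the external reference \cite{2025arXiv250807360L} (Lemma~9 there), so there is no in-paper argument to compare against.

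Your Hopf--Cole reduction is correct and is the standard route to such a result. The identity $-\Delta_p\Phi(u)=(\Phi'(u))^{p-1}\bigl(-\Delta_p u+\vartheta|\nabla u|^p\bigr)$ with $\Phi'(s)=e^{-\vartheta s/(p-1)}$ is right, and your check that $g(s)=e^{-\vartheta s}(cs^{-\gamma}-\vartheta)$ is non-increasing on $(0,(c/\vartheta)^{1/\gamma}]$ is accurate; this is precisely where the additive $\vartheta$ in the hypotheses and the upper bound on $u_1$ come into play. The concluding comparison step is then standard.

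One technical point deserves more care: the admissibility of $(v_1-v_2)^+$ as a test function. The hypotheses give only $u_i\in C^{1,\alpha}_{\rm loc}(\Omega)\cap L^\infty(\Omega)$, so $|\nabla u_i|$ may blow up at $\partial\Omega$ and $(v_1-v_2)^+$ need not lie in $W^{1,p}_0(\Omega)$ a priori, even for bounded $\Omega$. The usual fix is to interpret ``$u_1\le u_2$ on $\partial\Omega$'' as $\{u_1>u_2+\varepsilon\}\Subset\Omega$ for every $\varepsilon>0$; one then tests with $(v_1-v_2-\delta)^+$, which has compact support and hence lies in $W^{1,p}_0(\Omega)$ by the $C^{1,\alpha}_{\rm loc}$ regularity, and lets $\delta\downarrow0$. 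Your cut-off remark for unbounded $\Omega$ is in the right spirit, but the same localisation is needed on bounded $\Omega$ because of the possible gradient blow-up at the boundary. In the paper's applications the lemma is only invoked on bounded domains (balls, annuli, or their intersection with a strip), so the unbounded case is not actually required here. With this refinement, your argument closes.
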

	
	To obtain the sharp upper bound in the case $0<\gamma<1$, the following lemma is also needed.
	\begin{lemma}[Lemma 10 in \cite{2025arXiv250807360L}]\label{lem:U}
		Assume $0<\gamma<1$. Then for every $M>0$ and $r_2>r_1>0$, the problem
		\[
		\begin{cases}
			-\Delta_p u = \frac{M}{u^\gamma} &\text{ in } B_{r_2}\setminus\overline{B_{r_1}},\\
			u>0 &\text{ in } B_{r_2}\setminus\overline{B_{r_1}},\\
			u=0 &\text{ on } \partial(B_{r_2}\setminus\overline{B_{r_1}})
		\end{cases}
		\]
		admits a unique solution $U \in C^{1,\alpha}(B_{r_2}\setminus\overline{B_{r_1}}) \cap C(\overline{B_{r_2}\setminus{B_{r_1}}})$. Moreover, there exists $C>0$ such that
		\[
		c {\rm dist}(x, B_{r_2}\setminus\overline{B_{r_1}}) \le U(x) \le C {\rm dist}(x, B_{r_2}\setminus\overline{B_{r_1}}) \quad\text{ for all } x \in B_{r_2}\setminus\overline{B_{r_1}}.
		\]
	\end{lemma}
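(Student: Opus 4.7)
The plan is to obtain existence, uniqueness and interior $C^{1,\alpha}$-regularity from the standard theory of pure singular $p$-Laplace problems on smooth bounded domains, so that the references \cite{MR3136107,MR3478284} apply directly, with interior strict positivity coming from the strong maximum principle for the $p$-Laplacian. The main content of the lemma, namely the two-sided linear distance bound, will then be proved via explicit sub/supersolutions combined with the radial structure of the annulus.

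By rotation invariance of $\Omega := B_{r_2}\setminus\overline{B_{r_1}}$ and uniqueness, $U(x)=\phi(|x|)$ for some radial profile $\phi$ with $\phi(r_1)=\phi(r_2)=0$ and $\phi>0$ in $(r_1,r_2)$, satisfying
\[
-(r^{N-1}|\phi'|^{p-2}\phi')' = M r^{N-1}\phi^{-\gamma}, \quad r\in(r_1,r_2).
\]
There exists $r_0\in(r_1,r_2)$ with $\phi'(r_0)=0$, and integration from $r_0$ shows that $\phi$ is strictly increasing on $(r_1,r_0)$ and strictly decreasing on $(r_0,r_2)$. For the lower bound I apply Lemma~\ref{lem:wcp} with $\vartheta=0$: a direct computation gives $-\Delta_p(|x|-r_1)=-(N-1)/|x|\le 0$, so the affine function $w_\epsilon(x):=\epsilon(|x|-r_1)$ is a subsolution of the equation on the shell $\Omega_\delta:=\{r_1<|x|<r_1+\delta\}$ for any $\delta<(r_2-r_1)/2$. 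Choosing $\epsilon>0$ small enough that $w_\epsilon\le U$ on the compact set $\{|x|=r_1+\delta\}$, which is possible since $U$ is continuous and strictly positive there by the strong maximum principle, Lemma~\ref{lem:wcp} yields $w_\epsilon\le U$ throughout $\Omega_\delta$. A symmetric argument near $\partial B_{r_2}$ gives the matching bound on the other shell, and together they deliver $U\ge c\,\mathrm{dist}(\cdot,\partial\Omega)$.

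For the upper bound, the lower bound just obtained combined with $\gamma<1$ makes $\phi(s)^{-\gamma}\le C(r_2-s)^{-\gamma}$ integrable on $(r_0,r_2)$. Integrating the ODE from $r_0$ to $r\in(r_0,r_2)$ yields
\[
r^{N-1}|\phi'(r)|^{p-1} = M\int_{r_0}^r s^{N-1}\phi(s)^{-\gamma}\,ds,
\]
whose right-hand side has a finite positive limit as $r\to r_2^-$. Hence $|\phi'|^{p-1}$, and therefore $|\phi'|$, extends continuously to a finite positive value at $r_2$, giving $\phi(r)\le C(r_2-r)$ near $r_2$; the argument near $r_1$ is symmetric.

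The main technical obstacle is that Lemma~\ref{lem:wcp} requires $u_2\in C^{1,\alpha}_{\rm loc}(\Omega_\delta)\cap L^\infty(\Omega_\delta)$, while a priori $U$ only sits in this class on compact subsets of $\Omega$ that are bounded away from $\partial B_{r_1}$. I bypass this by carrying out the comparison at the level of the regularized approximations $U_n$ solving $-\Delta_p U_n = M/(U_n+1/n)^\gamma$, which belong to $C^{1,\alpha}(\overline{\Omega_\delta})$ by standard quasilinear regularity (the right-hand side being bounded for each $n$), and then passing to the monotone limit $U_n\uparrow U$ to transfer the lower bound; the same $\epsilon$ works for all $n$ since $U_n\ge U_1>0$ on the compact inner boundary by monotonicity and the strong maximum principle. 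With the two-sided linear distance estimate in hand, continuous extension of $U$ up to $\partial\Omega$ is immediate, completing the proof.
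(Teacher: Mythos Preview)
The paper does not prove this lemma; it is quoted verbatim as Lemma~10 of the author's earlier work \cite{2025arXiv250807360L}, so there is no in-text argument to compare against. Your sketch is essentially a correct self-contained proof: existence and uniqueness from the cited references, radiality from uniqueness, the lower bound via comparison with $\epsilon(|x|-r_1)$ (respectively $\epsilon(r_2-|x|)$) on thin shells, and the upper bound by integrating the radial ODE and using $\gamma<1$ to make $\phi^{-\gamma}$ integrable near the endpoints are all sound.

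Two minor corrections. First, your ``technical obstacle'' is misstated: $C^{1,\alpha}_{\rm loc}(\Omega_\delta)$ asks only for regularity on compact subsets of the \emph{open} shell $\Omega_\delta=\{r_1<|x|<r_1+\delta\}$, and such sets are automatically compact in $\Omega$ as well, so $U\in C^{1,\alpha}_{\rm loc}(\Omega_\delta)$ follows directly from interior regularity. The genuine issue your approximation $U_n\uparrow U$ resolves is rather the interpretation of the boundary inequality $w_\epsilon\le U$ on $\partial B_{r_1}$ in Lemma~\ref{lem:wcp} before $U\in C(\overline{\Omega})$ has been established; your workaround for that is valid. Second, near $\partial B_{r_2}$ one computes $-\Delta_p\bigl(\epsilon(r_2-|x|)\bigr)=\epsilon^{p-1}(N-1)/|x|>0$, so the argument is not literally ``symmetric'' to the inner shell; one still obtains a subsolution for $\epsilon$ small because $\epsilon^{p-1+\gamma}$ can be made as small as needed, but the sign check is different and should be said.
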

	
	With the above lemmas, we are ready to prove Theorem \ref{th:boundary_bound} via comparison arguments.
	\begin{proof}[Proof of Theorem \ref{th:boundary_bound}]
		Since $\frac{1}{u^\gamma} + f(u)$ is bounded in each compact set $K \subset\subset\mathbb{R}^N_+$, we have that $u\in C^{1,\alpha}_{\rm loc}(\mathbb{R}^N_+)$ by standard elliptic regularity (see \cite{MR709038,MR727034}).
		
		Setting
		\begin{equation}\label{L}
			x_0 = -\overline\lambda e_N,\quad \Omega = B_{3\overline\lambda}(x_0)\setminus \overline{B_{\overline\lambda}(x_0)} \quad\text{ and } \quad L=\sup_{\Sigma_{\overline\lambda}} u.
		\end{equation}
		Let $\lambda_1>0$ be the first eigenvalue and $\phi_1 \in C^1(\overline{\Omega})$ a corresponding positive normalized eigenfunction of the $p$-Laplacian in $\Omega$ with the Dirichlet boundary condition, namely,
		\[
		\begin{cases}
			-\Delta_p \phi_1 = \lambda_1 \phi_1^{p-1} & \text{ in } \Omega,\\
			\phi_1 > 0 & \text{ in } \Omega,\\
			\phi_1 = 0 & \text{ on } \partial \Omega,
		\end{cases}
		\]
		and $\|\phi_1\|_{L^\infty(\Omega)}=1$.	
		We consider three cases.
		
		\textit{Case 1: $\gamma>1$.}		
		Let
		\[
		w_s = s \phi_1^\frac{p}{\gamma+p-1},
		\]
		where $s>0$. A direct calculation yields
		\[
		-\Delta_p w_s = \frac{a_s(x)}{w_s^\gamma} \quad\text{ in } \Omega,
		\]
		where
		\[
		a_s(x) = s^{\gamma+p-1} \left(\frac{p}{\gamma+p-1}\right)^{p-1} \left[\lambda_1\phi_1(x)^p + \frac{(\gamma-1)(p-1)}{\gamma+p-1}|\nabla\phi_1(x)|^p\right].
		\]
		
		Since $u \le L$ in $\Sigma_{\overline\lambda}$, we can find $M>0$ sufficiently large such that
		\begin{equation}\label{M}
			\frac{1}{u^\gamma} + f(u) < \frac{M}{u^\gamma} \quad\text{ for } x\in \Sigma_{\overline\lambda}.
		\end{equation}
		On the other hand, by the positivity of $\phi_1$ and the Hopf boundary lemma for $\phi_1$, we know that
		\[
		\inf_{x\in \Omega} \left[\lambda_1\phi_1(x)^p + \frac{(\gamma-1)(p-1)}{\gamma+p-1}|\nabla\phi_1(x)|^p\right] > 0.
		\]
		Hence, we can choose some large $s_1>0$ such that
		\[
		a_{s_1}(x) > M \text{ for } x\in \Omega \quad\text{ and }\quad w_{s_1}(x) > L \text{ for } x\in \partial B_{2\overline\lambda}(x_0).
		\]
		
		Let any
		\begin{equation}\label{y}
			y=(y',y_N)\in\Sigma_{\overline\lambda}
		\end{equation}
		and set $\hat{y}=(y',-\overline\lambda)$, we have
		\[
		\begin{cases}
			-\Delta_p u \le \frac{M}{u^\gamma} &\text{ in }  B_{2\overline\lambda}(\hat{y})\cap\Sigma_{\overline\lambda},\\
			-\Delta_p w_{s_1,\hat{y}} \ge \frac{M}{w_{s_1,\hat{y}}^\gamma} &\text{ in }  B_{2\overline\lambda}(\hat{y})\cap\Sigma_{\overline\lambda},\\
			w_{s_1,\hat{y}}, u > 0 &\text{ in }  B_{2\overline\lambda}(\hat{y})\cap\Sigma_{\overline\lambda},\\
			w_{s_1,\hat{y}} \ge u &\text{ on } \partial (B_{2\overline\lambda}(\hat{y})\cap\Sigma_{\overline\lambda}),
		\end{cases}
		\]
		where $w_{s_1,\hat{y}}(x) = w_{s_1}(x-\hat{y})$.
		Lemma \ref{lem:wcp} (with $\vartheta=0$) can be applied to yield
		\[
		u \le w_{s_1,\hat{y}} \quad \text{ in } B_{2\overline\lambda}(\hat{y})\cap\Sigma_{\overline\lambda}.
		\]
		
		Since $y$ is arbitrary, we have
		\begin{equation}\label{upper_bound>1}
			u(x) \le C x_N^\frac{p}{\gamma+p-1} \text{ in }  \Sigma_{\overline\lambda} \quad\text{ for some } C>0.
		\end{equation}
		This also implies
		$u\in C(\overline{\mathbb{R}^N_+})$.
		
		Next, we prove the lower bound. 
		Exploiting \eqref{upper_bound>1}, we can find $\lambda_0<\frac{\overline\lambda}{2}$ such that
		\begin{equation}\label{lambda_0}
			\frac{1}{u^\gamma} + f(u) > \frac{1}{2u^\gamma} \quad\text{ for } x\in \Sigma_{2\lambda_0}.
		\end{equation}
		Setting
		\[
		\tilde x_0 = (0,\dots,0,\lambda_0).
		\]
		Let $\tilde\lambda_1>0$ and $\tilde\phi_1 \in C^1(\overline{B_{\lambda_0}(\tilde x_0)})$ be the first eigenvalue and a corresponding positive eigenfunction of the $p$-Laplacian in $B_{\lambda_0}(\tilde x_0)$ with the Dirichlet boundary condition, namely,
		\[
		\begin{cases}
			-\Delta_p \tilde\phi_1 = \tilde\lambda_1 \tilde\phi_1^{p-1} & \text{ in } B_{\lambda_0}(\tilde x_0),\\
			\tilde\phi_1 > 0 & \text{ in } B_{\lambda_0}(\tilde x_0),\\
			\tilde\phi_1 = 0 & \text{ on } \partial B_{\lambda_0}(\tilde x_0).
		\end{cases}
		\]
		
		Setting
		\[
		\tilde w_s = s \tilde\phi_1^\frac{p}{\gamma+p-1},
		\]
		where $s>0$. Then
		\[
		-\Delta_p \tilde w_s + \vartheta |\nabla \tilde w_s|^p + \vartheta = \frac{\tilde b_s(x)}{\tilde w_s^\gamma} \quad \text{ in } B_{\lambda_0}(\tilde x_0),
		\]
		where
		\begin{align*}
			\tilde b_s(x)
			&= s^{\gamma + p - 1} \left( \frac{p}{\gamma + p - 1} \right)^{p - 1} \left[ \tilde\lambda_1 \tilde\phi_1^p + \left( \frac{\vartheta sp}{\gamma + p - 1} \tilde\phi_1^p + \frac{(\gamma - 1)(p - 1)}{\gamma + p - 1} \right) |\nabla \tilde\phi_1|^p \right]\\
			&\quad + \vartheta s^\gamma \tilde\phi_1^\frac{\gamma p}{\gamma+p-1}.
		\end{align*}
		
		We also choose some small $s_2>0$ such that
		\[
		\tilde w_{s_2}(x) \le \left( \frac{1}{2\vartheta} \right)^{1/\gamma},\quad \tilde b_{s_2}(x) < \frac{1}{2} \quad\text{ for } x\in B_{\lambda_0}(\tilde x_0).
		\]
		
		Let any
		\begin{equation}\label{y2}
			y=(y',y_N)\in\Sigma_{\lambda_0}
		\end{equation}
		and set $\hat{y}=(y',\lambda_0)$.
		Since $0<q\le p$, it follows that $|\nabla u|^q < |\nabla u|^p + 1$. Therefore, we have
		\[
		\begin{cases}
			-\Delta_p \tilde w_{s_2,\hat{y}} + \vartheta |\nabla \tilde w_{s_2,\hat{y}}|^p + \vartheta \le \frac{1}{2 \tilde w_{s_2,\hat{y}}^\gamma} &\text{ in }  B_{\lambda_0}(\hat{y}),\\
			-\Delta_p u + \vartheta |\nabla u|^p + \vartheta \ge \frac{1}{2 u^\gamma} &\text{ in }  B_{\lambda_0}(\hat{y}),\\
			\tilde w_{s_2,\hat{y}}, u > 0 &\text{ in }  B_{\lambda_0}(\hat{y}),\\
			\tilde w_{s_2,\hat{y}} \le u &\text{ on } \partial B_{\lambda_0}(\hat{y}),
		\end{cases}
		\]	
		where $\tilde w_{s_2,\hat{y}}(x) = \tilde w_{s_2}(x-\hat{y})$.
		Lemma \ref{lem:wcp} can be applied now to yield
		\[
		\tilde w_{s_2,\hat{y}}\le u \quad \text{ in }  B_{\lambda_0}(\hat{y}).
		\]
		This implies
		\[
		u(x) \ge c x_N^\frac{p}{\gamma+p-1} \text{ in }  \Sigma_{\lambda_0} \quad\text{ for some } c>0.
		\]
		
		\textit{Case 2: $\gamma=1$.} In this case, we set
		\[
		w_s = s \phi_1 (1- \ln \phi_1)^\frac{1}{p},
		\]
		where $s>0$. By direct calculations, we obtain
		\[
		-\Delta_p w_s = \frac{a_s(x)}{w_s} \quad\text{ in } \Omega,
		\]
		where
		\begin{align*}
			&a_s(x)\\
			&= s^p \left[z\left(z - \frac{1}{p} z^{1-p}\right)^{p-1} \lambda_1 \phi_1^p + \frac{p-1}{p} \left(1 - \frac{1}{p} z^{-p}\right)^{p-2} \left(1 + \frac{p-1}{p} z^{-p}\right) |\nabla\phi_1|^p\right]
		\end{align*}
		with $z:=(1- \ln \phi_1)^\frac{1}{p}$. The range of $z$ is $[1,+\infty)$. Moreover, by L'Hôpital's rule, we have $\phi_1 z^k \to 0$ as ${\rm dist}(x,\partial\Omega)\to0$ for any $k>0$. Hence, both terms in the square bracket are bounded above in $\Omega$. Their sum is also bounded below by a positive constant.
		
		
		Let $M>0$ as in \eqref{M} and choose some large $s_1>0$ such that
		\[
		a_{s_1}(x) > M \text{ for } x\in \Omega \quad\text{ and }\quad w_{s_1}(x) > L \text{ for } x\in \partial B_{2\overline\lambda}(x_0).
		\]
		
		Let any $y=(y',y_N)\in\Sigma_{\overline\lambda}$ and set $\hat{y}=(y',-\overline\lambda)$, we have
		\[
		\begin{cases}
			-\Delta_p u \le \frac{M}{u} &\text{ in }  B_{2\overline\lambda}(\hat{y})\cap\Sigma_{\overline\lambda},\\
			-\Delta_p w_{s_1,\hat{y}} \ge \frac{M}{w_{s_1,\hat{y}}} &\text{ in }  B_{2\overline\lambda}(\hat{y})\cap\Sigma_{\overline\lambda},\\
			w_{s_1,\hat{y}}, u > 0 &\text{ in }  B_{2\overline\lambda}(\hat{y})\cap\Sigma_{\overline\lambda},\\
			w_{s_1,\hat{y}} \ge u &\text{ on } \partial (B_{2\overline\lambda}(\hat{y})\cap\Sigma_{\overline\lambda}),
		\end{cases}
		\]
		where $w_{s_1,\hat{y}}(x) = w_{s_1}(x-\hat{y})$.
		Lemma \ref{lem:wcp} (with $\vartheta=0$) can be applied to yield
		\[
		u \le w_{s_1,\hat{y}} \quad \text{ in } B_{2\overline\lambda}(\hat{y})\cap\Sigma_{\overline\lambda}.
		\]
		
		Since $y$ is arbitrary, we have
		\[
		u(x) \le C x_N \left(1-\ln x_N\right)^\frac{1}{p} \text{ in }  \Sigma_{\overline\lambda} \quad\text{ for some } C>0.
		\]
		This also implies
		$u\in C(\overline{\mathbb{R}^N_+})$.
		
		To derive the lower bound, we let $\lambda_0<\frac{\overline\lambda}{2}$ such that \eqref{lambda_0} holds and set	
		\[
		\tilde w_s = s \tilde \phi_1 (1 - \ln \tilde \phi_1)^\frac{1}{p},
		\]
		where $s>0$ and $\tilde\phi_1$ is as in Case 1. Then
		\[
		-\Delta_p \tilde w_s + \vartheta |\nabla \tilde w_s|^p + \vartheta = \frac{\tilde b_s(x)}{\tilde w_s} \quad\text{ in } B_{\lambda_0}(\tilde x_0),
		\]
		where
		\begin{align*}
			&\tilde b_s(x)\\
			&= s^p \left[\tilde z\left(\tilde z - \frac{1}{p} \tilde z^{1-p}\right)^{p-1} \tilde \lambda_1 \tilde \phi_1^p + \frac{p-1}{p} \left(1 - \frac{1}{p} \tilde z^{-p}\right)^{p-2} \left(1 + \frac{p-1}{p} \tilde z^{-p}\right) |\nabla\tilde \phi_1|^p\right]\\
			&\quad+ \vartheta s^{q+1} \tilde \phi_1 \tilde z \left(\tilde z - \frac{1}{p} \tilde z^{1-p}\right)^q |\nabla \tilde \phi_1|^q.
		\end{align*}
		
		We choose some small $s_0>0$ such that
		\[
		\tilde w_{s_0}(x) \le \frac{1}{2\vartheta},\quad \tilde b_{s_0}(x) < \frac{1}{2} \quad\text{ for } x\in B_{\lambda_0}(\tilde x_0).
		\]
		
		Let any $y=(y',y_N)\in\Sigma_{\lambda_0}$ and set $\hat{y}=(y',\lambda_0)$. Then we have
		\[
		\begin{cases}
			-\Delta_p \tilde w_{s_0,\hat{y}} + \vartheta |\nabla \tilde w_{s_0,\hat{y}}|^p + \vartheta \le \frac{1}{2 \tilde w_{s_0,\hat{y}}} &\text{ in }  B_{\lambda_0}(\hat{y}),\\
			-\Delta_p u + \vartheta |\nabla u|^p + \vartheta \ge \frac{1}{2 u} &\text{ in }  B_{\lambda_0}(\hat{y}),\\
			\tilde w_{s_0,\hat{y}}, u > 0 &\text{ in }  B_{\lambda_0}(\hat{y}),\\
			\tilde w_{s_0,\hat{y}} \le u &\text{ on } \partial B_{\lambda_0}(\hat{y}),
		\end{cases}
		\]	
		where $\tilde w_{s_0,\hat{y}}(x) = \tilde w_{s_0}(x-\hat{y})$.
		Lemma \ref{lem:wcp} can be applied now to yield
		\[
		\tilde w_{s_0,\hat{y}}\le u \quad \text{ in }  B_{\lambda_0}(\hat{y}).
		\]
		This implies
		\[
		u(x) \ge c x_N \left(1-\ln x_N\right)^\frac{1}{p} \text{ in }  \Sigma_{\lambda_0} \quad\text{ for some } c>0.
		\]
		
		\textit{Case 3: $0<\gamma<1$.}	
		Let $M>0$ be as in \eqref{M}. Let $U$ be defined as in Lemma \ref{lem:U} with $r_1=1$ and $r_2=3$. Then let $s>1$ be sufficiently large such that $sU>L$ on $\partial B_{2\overline\lambda}$.
		
		Let any $y=(y',y_N)\in\Sigma_{\overline\lambda}$ and set $\hat{y}=(y',-\overline\lambda)$, we have
		\[
		\begin{cases}
			-\Delta_p u \le \frac{M}{u^\gamma} &\text{ in }  B_{2\overline\lambda}(\hat{y})\cap\Sigma_{\overline\lambda},\\
			-\Delta_p U_{\hat{y}} \ge \frac{M s^{\gamma+p-1}}{U_{\hat{y}}^\gamma} \ge \frac{M}{U_{\hat{y}}^\gamma} &\text{ in }  B_{2\overline\lambda}(\hat{y})\cap\Sigma_{\overline\lambda},\\
			U_{\hat{y}}, u > 0 &\text{ in }  B_{2\overline\lambda}(\hat{y})\cap\Sigma_{\overline\lambda},\\
			U_{\hat{y}} \ge u &\text{ on } \partial (B_{2\overline\lambda}(\hat{y})\cap\Sigma_{\overline\lambda}),
		\end{cases}
		\]
		where $U_{\hat{y}}(x) = sU(x-\hat{y})$.
		Lemma \ref{lem:wcp} (with $\vartheta=0$) can be applied to yield
		\[
		u \le U_{\hat{y}} \quad \text{ in } B_{2\overline\lambda}(\hat{y})\cap\Sigma_{\overline\lambda}.
		\]
		
		Since $y$ is arbitrary, we can use Lemma \ref{lem:wcp} to deduce
		\[
		u(x) \le C x_N \text{ in }  \Sigma_{\overline\lambda} \quad\text{ for some } C>0.
		\]
		This also implies $u\in C(\overline{\mathbb{R}^N_+})$.
		
		To derive the lower bound, we let $\lambda_0<\frac{\overline\lambda}{2}$ such that \eqref{lambda_0} holds and set
		\[
		v_s = s \tilde\phi_1,
		\]
		where $s>0$ and $\tilde\phi_1$ is as in Case 1. We have
		\[
		-\Delta_p v_s + \vartheta |\nabla v_s|^p + \vartheta = \frac{b_s(x)}{v_s^\gamma} \quad\text{ in } B_{\lambda_0}(\tilde x_0),
		\]
		where
		\[
		b_s(x) = s^{p + \gamma - 1} \tilde\lambda_1 \tilde\phi_1^{p + \gamma - 1}
		+ \vartheta s^{p + \gamma} \tilde\phi_1^\gamma |\nabla \tilde\phi_1|^p
		+ \vartheta s^\gamma \tilde\phi_1^\gamma.
		\]
		
		We also choose some small $s_0>0$ such that
		\[
		v_{s_0}(x) \le \left( \frac{1}{2\vartheta} \right)^{1/\gamma},\quad b_{s_0}(x) < \frac{1}{2} \quad\text{ for } x\in B_{\lambda_0}(\tilde x_0).
		\]
		
		Let any $y=(y',y_N)\in\Sigma_{\lambda_0}$ and set $\hat{y}=(y',\lambda_0)$. Then we have
		\[
		\begin{cases}
			-\Delta_p v_{s_0,\hat{y}} + \vartheta |\nabla v_{s_0,\hat{y}}|^p + \vartheta \le \frac{1}{2 v_{s_0,\hat{y}}^\gamma} &\text{ in }  B_{\lambda_0}(\hat{y}),\\
			-\Delta_p u + \vartheta |\nabla u|^p + \vartheta \ge \frac{1}{2 u^\gamma} &\text{ in }  B_{\lambda_0}(\hat{y}),\\
			v_{s_0,\hat{y}}, u > 0 &\text{ in }  B_{\lambda_0}(\hat{y}),\\
			v_{s_0,\hat{y}} \le u &\text{ on } \partial B_{\lambda_0}(\hat{y}),
		\end{cases}
		\]	
		where $v_{s_0,\hat{y}}(x) = v_{s_0}(x-\hat{y})$.
		Lemma \ref{lem:wcp} can be applied now to yield
		\[
		v_{s_0,\hat{y}}\le u \quad \text{ in }  B_{\lambda_0}(\hat{y}).
		\]
		This implies
		\[
		u(x) \ge c x_N \text{ in }  \Sigma_{\lambda_0} \quad\text{ for some } c>0.
		\]
		
		This completes the proof.
	\end{proof}
	
	\subsection{Estimates for directional derivatives of solutions near the boundary}
	Once the sharp bounds near the boundary for solutions are available, we can derive Theorem \ref{th:boundary_behavior} via a scaling argument.
	\begin{proof}[Proof of Theorem \ref{th:boundary_behavior}]
		By Theorem \ref{th:boundary_bound}, $u\in C^{1,\alpha}_{\rm loc}(\mathbb{R}^N_+)\cap C(\overline{\mathbb{R}^N_+})$ for some $0<\alpha<1$.
		We consider three cases.
		
		\textit{Case 1: $\gamma>1$.} In this case, there exist $\lambda_0,c_1,c_2>0$ such that
		\begin{equation}\label{u_bound>1}
			c_1 x_N^\frac{p}{\gamma+p-1} \le u(x) \le c_2 x_N^\frac{p}{\gamma+p-1} \quad\text{ in } \Sigma_{\lambda_0}.
		\end{equation}
		
		Let any sequences $x_n = (x_n', x_{n,N}) \in \mathbb{R}^N_+$ and $\eta_n \in \mathbb{S}^{N-1}$ such that
		\[
		\varepsilon_n:=x_{n,N}\to 0 \quad\text{ and }\quad \eta_n\to \eta_0 \quad\text{ as } n\to\infty.
		\]
		We define
		\[
		w_n(x) := \varepsilon_n^{-\frac{p}{\gamma+p-1}} u(\varepsilon_n x'+x_n', \varepsilon_n x_N) \quad\text{ for } x=(x',x_N) \in \mathbb{R}^N_+.
		\]
		
		Let $A>a>0$. For $n$ sufficiently large such that $\varepsilon_n<\frac{\lambda_0}{A}$, we deduce from \eqref{u_bound>1}
		\begin{equation}\label{wn_bound>1}
			c_1 a^\frac{p}{\gamma+p-1} \le c_1 x_N^\frac{p}{\gamma+p-1} \le w_n(x) \le c_2 x_N^\frac{p}{\gamma+p-1} \le c_2 A^\frac{p}{\gamma+p-1} \quad\text{ in } \Sigma_A\setminus \Sigma_a.
		\end{equation}
		Furthermore, $w_n$ solves
		\begin{equation}\label{wn_eq>1}
			-\Delta_p w_n + \varepsilon_n^{\frac{\gamma (p - q) + q}{\gamma + p - 1}} \vartheta |\nabla w_n|^q = \frac{1}{w_n^\gamma} + \varepsilon_n^{\frac{p \gamma}{\gamma + p - 1}} f\left( \varepsilon_n^{\frac{p}{\gamma + p - 1}} w_n \right) \quad \text{ in } \mathbb{R}^N_+.
		\end{equation}
		
		From \eqref{wn_bound>1} and the standard regularity \cite{MR1814364}, we know that $(w_n)$ is uniformly bounded in $C^{1,\alpha}(\overline{\Sigma_A\setminus \Sigma_a})$. Hence, up to a subsequence, we have
		\[
		w_n \to w_{a,A} \quad\text{ in } C^{1,\alpha'}_{\rm loc}(\overline{\Sigma_A\setminus \Sigma_a}),
		\]
		where $0<\alpha'<\alpha<1$.
		Moreover, passing \eqref{wn_eq>1} to the limit, we get
		\[
		-\Delta w_{a,A} = \frac{1}{w_{a,A}^\gamma} \quad\text{ in } \Sigma_A\setminus \Sigma_a.
		\]
		Now we take $a = \frac{1}{j}$ and $A = j$, for large $j \in \mathbb{N}$ and determine $w_{\frac{1}{j},j}$ as above. For $j\to\infty$, using a standard diagonal process, we can construct a limiting profile $w_\infty \in C^{1,\alpha'}_{\rm loc}(\mathbb{R}^N_+)$ so that
		\begin{equation}\label{w_infty}
			-\Delta w_\infty = \frac{1}{w_\infty^\gamma} \quad\text{ in } \mathbb{R}^N_+
		\end{equation}
		and $w_{\frac{1}{j},j} = w_\infty$ in $\Sigma_j\setminus \Sigma_{\frac{1}{j}}$. Moreover, from \eqref{wn_bound>1} we know that
		\begin{equation}\label{w_infty_bounds}
			c_1 x_N^\frac{p}{\gamma+p-1} \le w_\infty(x) \le c_2 x_N^\frac{p}{\gamma+p-1} \quad\text{ in } \mathbb{R}^N_+.
		\end{equation}
		Hence by defining $w_\infty=0$ on $\partial\mathbb{R}^N_+$, we have $w_\infty \in C^{1,\alpha'}_{\rm loc}(\mathbb{R}^N_+)\cap C(\overline{\mathbb{R}^N_+})$ and $w_\infty$ is a solution to \eqref{main} with $\vartheta=0$ and $f\equiv0$.
		By \cite[Theorem 1.2]{MR4044739}, we have
		\[
		w_\infty(x) \equiv \left[\frac{(\gamma+p-1)^p}{p^{p-1}(p-1)(\gamma-1)}\right]^\frac{1}{\gamma+p-1}	x_N^\frac{p}{\gamma+p-1}.
		\]
		Therefore,
		\[
		x_{n,N}^\frac{\gamma-1}{\gamma+p+1} \frac{\partial u(x_n)}{\partial \eta_n} = \frac{\partial w_n(e_N)}{\partial \eta_n} \to \frac{\partial w_\infty(e_N)}{\partial \eta_0}=w_\infty'(1)\langle\eta_0,e_N\rangle.
		\]
		This proves \eqref{limit>1}. Then \eqref{derivative_bound>1} follows by a contradiction argument.
		
		Lastly, from \eqref{u_bound>1} and \eqref{limit>1}, we have
		\[
		\left|\nabla\left(u^\frac{\gamma+p-1}{p}\right)\right| = \frac{\gamma+p-1}{p} u^\frac{\gamma-1}{p} |\nabla u| \le C \frac{\gamma+p-1}{p} c_2^\frac{\gamma-1}{p} \quad\text{ in } \Sigma_{\lambda_0}.
		\]
		Hence $u^\frac{\gamma+p-1}{p}$ is Lipschitz continuous in $\overline\Sigma_{\lambda_0}$.
		This indicates $u \in C^{\frac{p}{\gamma+p-1}}_{\rm loc}(\overline{\mathbb{R}^N_+})$.
		
		\textit{Case 2: $\gamma=1$.} By Theorem \ref{th:boundary_bound}, there exist $0<\lambda_0<1$, $c_1,c_2>0$ such that
		\begin{equation}\label{u_bound=1}
			c_1 x_N \left(1-\ln x_N\right)^\frac{1}{p} \le u(x) \le c_2 x_N \left(1-\ln x_N\right)^\frac{1}{p} \quad\text{ in } \Sigma_{\lambda_0}.
		\end{equation}
		
		We prove the lower bound of \eqref{derivative_bound=1}. Suppose by contradiction that there exist $\beta>0$, a sequence of points $x_n\in\mathbb{R}^N_+$ and a sequence of vectors $\eta_n\in \mathbb{S}^{N-1}$ with $\langle\eta_n,e_N\rangle \ge \beta$ such that
		\begin{equation}\label{contra_assump2=1}
			\varepsilon_n:=x_{n,N}\to 0 \quad\text{ and }\quad \left(1-\ln \varepsilon_n\right)^{-\frac{1}{p}} \frac{\partial u(x_n)}{\partial \eta_n} \to 0 \quad\text{ as } n\to\infty.
		\end{equation}
		Up to a subsequence, we may assume
		\[
		\eta_n\to \eta_0 \quad\text{ as } n\to\infty
		\]
		with $\langle\eta_0, e_N\rangle \ge \beta$.
		We define
		\[
		w_n(x) := \varepsilon_n^{-1} \left(1-\ln \varepsilon_n\right)^{-\frac{1}{p}} u(\varepsilon_n x'+x_n', \varepsilon_n x_N) \quad\text{ for } x=(x',x_N) \in \mathbb{R}^N_+.
		\]
		
		For $A>a>0$ and $n$ sufficiently large, we deduce from \eqref{u_bound=1}
		\[
		c_1 x_N \left(\frac{1-\ln(\varepsilon_n x_N)}{1-\ln\varepsilon_n}\right)^\frac{1}{p} \le w_n(x) \le c_2 x_N \left(\frac{1-\ln(\varepsilon_n x_N)}{1-\ln\varepsilon_n}\right)^\frac{1}{p} \quad\text{ in } \Sigma_A\setminus \Sigma_a.
		\]
		Notice that
		\[
		\frac{1-\ln(A \varepsilon_n)}{1-\ln\varepsilon_n} \le \frac{1-\ln(\varepsilon_n x_N)}{1-\ln\varepsilon_n} \le \frac{1-\ln(a \varepsilon_n)}{1-\ln\varepsilon_n} \quad\text{ in } \Sigma_A\setminus \Sigma_a.
		\]
		Hence, for large $n$,
		\begin{equation}\label{wn_bound=1}
			\frac{c_1}{2} a \le \frac{c_1}{2} x_N \le w_n(x) \le 2 c_2 x_N \le 2 c_2 A \quad\text{ in } \Sigma_A\setminus \Sigma_a.
		\end{equation}
		
		Moreover,
		\begin{equation}\label{wn_eq=1}
			\begin{aligned}
				&-\Delta_p w_n + \varepsilon_n \left(1-\ln \varepsilon_n\right)^{\frac{q-p+1}{p}} \vartheta |\nabla w_n|^q\\
				&= \frac{\left(1-\ln \varepsilon_n\right)^{-1}}{w_n} + \varepsilon_n \left(1-\ln \varepsilon_n\right)^{-\frac{p-1}{p}} f\left(\varepsilon_n \left(1-\ln \varepsilon_n\right)^{\frac{1}{p}} w_n\right) \quad \text{ in } \mathbb{R}^N_+.
			\end{aligned}
		\end{equation}
		
		As in Case 1, $(w_n)$ is uniformly bounded in $C^{1,\alpha}(\overline{\Sigma_A\setminus \Sigma_a})$. Via the compactness embedding and a diagonal process, we have $w_n \to w_\infty$ in $C^{1,\alpha'}_{\rm loc}(\mathbb{R}^N_+)$ up to a subsequence for some $w_\infty \in C^{1,\alpha'}_{\rm loc}(\mathbb{R}^N_+)$.
		Moreover, \eqref{wn_bound=1} implies
		\[
		c_1 x_N \le w_\infty(x) \le c_2 x_N \quad\text{ in } \mathbb{R}^N_+.
		\]
		Hence by defining $w_\infty=0$ on $\partial\mathbb{R}^N_+$, we have $w_\infty \in C^{1,\alpha'}_{\rm loc}(\mathbb{R}^N_+)\cap C(\overline{\mathbb{R}^N_+})$.
		Passing $n\to\infty$ in \eqref{wn_eq=1}, we deduce
		\[
		\begin{cases}
			-\Delta w_\infty = \frac{1}{w_\infty^\gamma} &\text{ in } \mathbb{R}^N_+,\\
			u > 0 &\text{ in } \mathbb{R}^N_+,\\
			u = 0 &\text{ on } \partial\mathbb{R}^N_+.
		\end{cases}
		\]
		By \cite[Theorem 3.1]{MR2337497}, we have
		\[
		w_\infty(x) \equiv a x_N,
		\]
		for some $a>0$.
		Therefore,
		\[
		\left(1-\ln \varepsilon_n\right)^{-\frac{1}{p}} \frac{\partial u(x_n)}{\partial \eta_n} = \frac{\partial w_n(e_N)}{\partial \eta_n} \to \frac{\partial w_\infty(e_N)}{\partial \eta_0}=\langle\eta_0,e_N\rangle \ge \beta>0.
		\]
		However, this contradicts \eqref{contra_assump2=1}. Hence, the lower bound of \eqref{derivative_bound=1} is proved. The upper bound can be obtained in the same way.
		
		Lastly, from \eqref{u_bound=1} and \eqref{derivative_bound=1}, we deduce for any $s\in(0,1)$,
		\[
		\left|\nabla\left(u^\frac{1}{s}\right)\right| = s u^{\frac{1}{s}-1} |\nabla u| \le C' x_N^{\frac{1}{s}-1} \left(1-\ln x_N\right)^\frac{1}{ps} < C'' \quad\text{ in } \Sigma_{\lambda_0},
		\]
		thanks to $\lim_{t\to0^+} t^{\frac{1}{s}-1} \left(L-\ln t\right)^\frac{1}{ps} = 0$.
		Hence $u^s$ is Lipschitz continuous in $\overline\Sigma_{\lambda_0}$.
		This indicates $u \in C^s_{\rm loc}(\overline{\mathbb{R}^N_+})$.
		
		\textit{Case 3: $0<\gamma<1$.} In this case, there exist $\lambda_0,c_1,c_2>0$ such that
		\begin{equation}\label{u_bound<1}
			c_1 x_N \le u(x) \le c_2 x_N \quad\text{ in } \Sigma_{\lambda_0}.
		\end{equation}
		
		We prove the lower bound of \eqref{derivative_bound<1}. Suppose by contradiction that there exist $\beta>0$, a sequence of points $x_n\in\mathbb{R}^N_+$ and a sequence of normal vectors $\eta_n\in \mathbb{S}^{N-1}$ with $\langle\eta_n,e_N\rangle \ge \beta$ such that
		\begin{equation}\label{contra_assump2<1}
			\varepsilon_n:=x_{n,N}\to 0 \quad\text{ and }\quad \frac{\partial u(x_n)}{\partial \eta_n} \to 0 \quad\text{ as } n\to\infty.
		\end{equation}
		Up to a subsequence, we may assume
		\[
		\eta_n\to \eta_0 \quad\text{ as } n\to\infty
		\]
		with $\langle\eta_0, e_N\rangle \ge \beta$.
		We define
		\[
		w_n(x) := \varepsilon_n^{-1} u(\varepsilon_n x'+x_n', \varepsilon_n x_N) \quad\text{ for } x=(x',x_N) \in \mathbb{R}^N_+.
		\]
		
		For $A>a>0$ and $n$ sufficiently large, we deduce from \eqref{u_bound<1}
		\begin{equation}\label{wn_bound<1}
			c_1 a \le c_1 x_N \le w_n(x) \le c_2 x_N \le c_2 A \quad\text{ in } \Sigma_A\setminus \Sigma_a.
		\end{equation}
		
		Moreover,
		\begin{equation}\label{wn_eq<1}
			-\Delta_p w_n + \varepsilon_n \vartheta |\nabla w_n|^q = \frac{\varepsilon_n^{1-\gamma}}{w_n^\gamma} + \varepsilon_n f\left( \varepsilon_n w_n \right) \quad \text{ in } \mathbb{R}^N_+.
		\end{equation}
		
		As in Case 1, 1$w_n \to w_\infty$ in $C^{1,\alpha'}_{\rm loc}(\mathbb{R}^N_+)$ up to a subsequence for some $w_\infty \in C^{1,\alpha'}_{\rm loc}(\mathbb{R}^N_+)$.
		Moreover, \eqref{wn_bound<1} implies
		\[
		c_1 x_N \le w_\infty(x) \le c_2 x_N \quad\text{ in } \mathbb{R}^N_+.
		\]
		Hence by defining $w_\infty=0$ on $\partial\mathbb{R}^N_+$, we have $w_\infty \in C^{1,\alpha'}_{\rm loc}(\mathbb{R}^N_+)\cap C(\overline{\mathbb{R}^N_+})$.
		Passing $n\to\infty$ in \eqref{wn_eq<1}, we deduce
		\[
		\begin{cases}
			-\Delta w_\infty = \frac{1}{w_\infty^\gamma} &\text{ in } \mathbb{R}^N_+,\\
			u > 0 &\text{ in } \mathbb{R}^N_+,\\
			u = 0 &\text{ on } \partial\mathbb{R}^N_+.
		\end{cases}
		\]
		By \cite[Theorem 3.1]{MR2337497}, we have
		\[
		w_\infty(x) \equiv a x_N,
		\]
		for some $a>0$.
		Therefore,
		\[
		\frac{\partial u(x_n)}{\partial \eta_n} = \frac{\partial w_n(e_N)}{\partial \eta_n} \to \frac{\partial w_\infty(e_N)}{\partial \eta_0}=\langle\eta_0,e_N\rangle \ge \beta>0.
		\]
		However, this contradicts \eqref{contra_assump2<1}. Hence, the lower bound of \eqref{derivative_bound=1} is proved. The upper bound can be obtained in the same way.
		
		On the other hand, by \eqref{u_bound<1},
		\[
		-\Delta_p u = \frac{1}{u^\gamma} + f(u) - \vartheta |\nabla u|^q \le \frac{M}{u^\gamma} \le \frac{M}{x_N^\gamma} \quad\text{ in } \Sigma_{\lambda_0}
		\]
		for some large $M>0$.	
		Hence we can apply \cite[Theorem B.1]{MR2341518} to get $u\in C^{1,\alpha}_{\rm loc}(\overline{\mathbb{R}^N_+})$ for some $0<\alpha<1$.
		
		This completes the proof.
	\end{proof}
	
	\section{Monotonicity of solutions and a nonexistence result}
	Theorem \ref{th:boundary_bound} is only concerned about the bounds for solutions near $\partial\mathbb{R}^N_+$. The following lemma will provide a global bound from below for solutions.
	\begin{lemma}\label{lem:global_lower_bound}
		Let $g:(0,+\infty)\to\mathbb{R}$ be a locally Lipschitz continuous function such that $g(t) > \frac{c_0}{t^\gamma}$ for all $0<t<t_0$, where $\gamma,c_0,t_0>0$. Let $u \in C^{1,\alpha}_{\rm loc}(\mathbb{R}^N_+) \cap C(\overline{\mathbb{R}^N_+})$ be a solution to the problem
		\[
		\begin{cases}
			-\Delta_p u + \vartheta |\nabla u|^q = g(u) &\text{ in } \mathbb{R}^N_+,\\
			u>0 &\text{ in } \mathbb{R}^N_+,\\
			u=0 &\text{ on } \partial\mathbb{R}^N_+.
		\end{cases}
		\]
		Then there is a constant $C>0$ such that
		\begin{equation}\label{t1_est}
			u(x) \ge \min\left\{C x_N^\frac{p}{\gamma+p-1}, t_1\right\} \quad\text{ in } \mathbb{R}^N_+,
		\end{equation}
		where $t_1 := \min\left\{t_0, \left( \frac{c_0}{\vartheta} \right)^{1/\gamma}\right\}$.
	\end{lemma}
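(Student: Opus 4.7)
The plan is to apply the comparison principle (Lemma \ref{lem:wcp}) with an eigenfunction-based sub-barrier placed inside a ball tangent to $\partial \mathbb{R}^N_+$, compared against $u$ on the sub-level set $\Omega_0 := \{x \in \mathbb{R}^N_+ : u(x) < t_1\}$. The starting observation is that $u$ is a supersolution in the sense required by Lemma \ref{lem:wcp} on $\Omega_0$: since $u < t_1 \le t_0$ there, the hypothesis on $g$ yields $g(u) > c_0/u^\gamma$, and using $|\nabla u|^q \le |\nabla u|^p + 1$ (from $0 < q \le p$) together with the PDE,
\[
-\Delta_p u + \vartheta |\nabla u|^p + \vartheta \ge -\Delta_p u + \vartheta |\nabla u|^q = g(u) > \frac{c_0}{u^\gamma} \quad\text{in } \Omega_0,
\]
while $u \le t_1 \le (c_0/\vartheta)^{1/\gamma}$ on $\Omega_0$ provides the boundedness condition of Lemma \ref{lem:wcp} with $c = c_0$.

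For each $y \in \Omega_0$, I would work in the ball $B_{y_N}(y)$, which is tangent to $\partial \mathbb{R}^N_+$ at $(y', 0)$. Let $\tilde \phi_1^{y_N}$ denote the first positive Dirichlet eigenfunction of $-\Delta_p$ on $B_{y_N}(0)$ normalized by $\|\tilde \phi_1^{y_N}\|_\infty = 1$, and define
\[
\tilde w_{s, y}(x) := s\, \tilde\phi_1^{y_N}(x - y)^{p/(\gamma+p-1)}.
\]
A computation parallel to Case 1 of the proof of Theorem \ref{th:boundary_bound} gives
\[
-\Delta_p \tilde w_{s,y} + \vartheta |\nabla \tilde w_{s,y}|^p + \vartheta = \frac{\tilde b_{s,y}}{\tilde w_{s,y}^\gamma} \quad\text{in } B_{y_N}(y),
\]
and the scaling relations $\tilde\phi_1^{y_N}(\xi) = \tilde\phi_1^1(\xi/y_N)$, $\tilde\lambda_1^{y_N} = \tilde\lambda_1^1/y_N^p$ yield a uniform bound
\[
\sup_{B_{y_N}(y)} \tilde b_{s,y} \le \frac{c_1 s^{\gamma+p-1} + c_2 \vartheta s^{\gamma+p}}{y_N^p} + c_3 \vartheta s^\gamma,
\]
with $c_i > 0$ depending only on $\gamma$ and $p$.

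Choosing $s = s(y_N) := \kappa \min\{y_N^{p/(\gamma+p-1)},\, 1\}$ with $\kappa$ sufficiently small (depending on $\gamma, p, \vartheta, c_0$) ensures both $\sup \tilde b_{s,y} \le c_0$ and $s \le t_1$. The required boundary inequality $\tilde w_{s,y} \le u$ on $\partial(B_{y_N}(y) \cap \Omega_0)$ splits into three pieces: on $\partial B_{y_N}(y) \cap \mathbb{R}^N_+$ the barrier vanishes while $u > 0$; at the tangent point $(y',0)$ both vanish; on $\{u = t_1\} \cap \overline{B_{y_N}(y)}$ one has $\tilde w_{s,y} \le s \le t_1 = u$. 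Lemma \ref{lem:wcp} applied on each connected component of $B_{y_N}(y) \cap \Omega_0$ thus yields $u \ge \tilde w_{s,y}$, and evaluation at the center $y$ gives $u(y) \ge s(y_N)$. Since $u \ge t_1$ trivially outside $\Omega_0$, absorbing $\kappa$ into a new constant $C > 0$ produces $u(x) \ge \min\{C\, x_N^{p/(\gamma+p-1)},\, t_1\}$ throughout $\mathbb{R}^N_+$.

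The main obstacle is deriving the scaling-invariant estimate for $\tilde b_{s,y}$ and calibrating the three homogeneity degrees in $s$ (namely $s^\gamma$, $s^{\gamma+p-1}$, $s^{\gamma+p}$) so that the constraints $\sup \tilde b_{s,y} \le c_0$ and $s \le t_1$ are simultaneously met and deliver the correct two-piece lower bound. A secondary point of care is the case $0 < \gamma < 1$: the coefficient $(\gamma-1)(p-1)/(\gamma+p-1)$ appearing in $\tilde b_{s,y}$ becomes negative, which actually assists the comparison by making certain contributions non-positive, but demands attention when tracking signs in the sup estimate.
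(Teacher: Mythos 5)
Your setup is sound up to the last step: the eigenfunction-power sub-barrier, the supersolution property of $u$ on $\{u<t_1\}$ via $g(u)>c_0/u^\gamma$ and $|\nabla u|^q\le|\nabla u|^p+1$, and the scaling estimate for $\tilde b_{s,y}$ are all fine (two small remarks: the side condition in Lemma \ref{lem:wcp} is imposed on the \emph{subsolution}, i.e.\ on your barrier, which indeed satisfies $\tilde w_{s,y}\le s\le(c_0/\vartheta)^{1/\gamma}$, not on $u$; and using the tangent ball $B_{y_N}(y)$, whose closure meets $\partial\mathbb{R}^N_+$, is legitimate only because $u\in C(\overline{\mathbb{R}^N_+})$ --- the paper instead keeps the comparison ball at distance $\varepsilon$ from the boundary and lets $\varepsilon\to0$). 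The genuine gap is the final ``absorption'' of $\kappa$: with $s(y_N)=\kappa\min\{y_N^{p/(\gamma+p-1)},1\}$ and $\kappa$ forced small, what your argument delivers is $u\ge\min\{\kappa\,x_N^{p/(\gamma+p-1)},\kappa\}$ in $\mathbb{R}^N_+$. This does not imply $u\ge\min\{C x_N^{p/(\gamma+p-1)},t_1\}$ for any $C>0$: for $x_N$ large the lemma demands $u\ge t_1$, while your barrier never exceeds the small constant $\kappa<t_1$. So you prove a strictly weaker statement; the second entry of the min, which is the whole point of \eqref{t1_est}, is not reached.

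The missing idea is the calibration of the barrier's amplitude against the ball's radius, which is how the paper gets the cap to be exactly $t_1$. There, $s$ is fixed once so that the coefficient $a\le c_0$ on $B_1$, one sets $w_{x_0,R}(x)=R^{p/(\gamma+p-1)}w((x-x_0)/R)$, and $R_0$ is defined precisely by $R_0^{p/(\gamma+p-1)}w(0)=t_1$; the comparison is then run on $\{u<w_{x_0,R}\}$ (so $u<w_{x_0,R}\le t_1\le t_0$ gives $g(u)>c_0/u^\gamma$ there, with no need for your set $\{u<t_1\}$), and the scaling is admissible because the eigenvalue/gradient contributions decay like $R^{-p}$ while the $\vartheta$-terms stay within budget thanks to $w_{x_0,R}\le t_1\le(c_0/\vartheta)^{1/\gamma}$. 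Taking $R=R_0$ at points with $x_{0,N}\ge R_0$ yields $u(x_0)\ge t_1$ on the nose. In your framework the analogous repair is to let $s$ increase with $y_N$ up to $t_1$ (the terms of order $y_N^{-p}$ in your bound for $\tilde b_{s,y}$ leave room for this for $y_N$ large), keeping $s=\kappa\,y_N^{p/(\gamma+p-1)}$ only for small $y_N$; freezing $s$ at a single small $\kappa$ cannot work.
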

	
	\begin{proof}
		Let $\lambda_1>0$ be the first eigenvalue and $\phi_1 \in C^1(\overline{B_1})$ a corresponding positive eigenfunction of the $p$-Laplacian in $B_1$, namely,
		\[
		\begin{cases}
			-\Delta_p \phi_1 = \lambda_1 \phi_1^{p-1} & \text{ in } B_1,\\
			\phi_1 > 0 & \text{ in } B_1,\\
			\phi_1 = 0 & \text{ on } \partial B_1.
		\end{cases}
		\]
		
		Setting
		\[
		w = s \phi_1^\frac{p}{\gamma+p-1},
		\]
		where $s>0$ will be chosen later. Direct calculation yields that in the weak sense
		\[
		-\Delta_p w + \vartheta |\nabla w|^p + \vartheta = \frac{ a(x)}{w^\gamma} \quad \text{ in } B_1,
		\]
		where
		\begin{align*}
			a(x)
			&= s^{\gamma + p - 1} \left( \frac{p}{\gamma + p - 1} \right)^{p - 1} \left[ \lambda_1 \phi_1^p + \left( \frac{\vartheta sp}{\gamma + p - 1} \phi_1^p + \frac{(\gamma - 1)(p - 1)}{\gamma + p - 1} \right) |\nabla \phi_1|^p \right]\\
			&\quad + \vartheta s^\gamma \phi_1^\frac{\gamma p}{\gamma+p-1}.
		\end{align*}
		
		Now we fix $s>0$ such that $a(x) \le c_0$ in $B_1$. Then let $R_0>0$ be such that
		\[
		R_0^\frac{p}{\gamma+p-1} w(0) = t_1 := \max\left\{t_0, \left( \frac{c_0}{\vartheta} \right)^{1/\gamma}\right\}.
		\]
		
		For any $0<R\le R_0$ and $x_0 = (x_0', x_{0,N})\in\mathbb{R}^N_+$ with $x_{0,N} \ge R + \varepsilon$, where $\varepsilon$ is sufficiently small, we set
		\[
		w_{x_0,R}(x) := R^\frac{p}{\gamma+p-1}w\left(\frac{x-x_0}{R}\right) \quad\text{ in } B_R(x_0).
		\]
		Then $w_{x_0,R} \le t_1$ in $B_R(x_0)$. We claim that
		\begin{equation}\label{u<w}
			u \ge w_{x_0,R} \quad\text{ in } B_R(x_0).
		\end{equation}
		Indeed, if \eqref{u<w} does not hold, then the set $\Omega := \{x\in B_R(x_0) \mid u < w_{x_0,R}\}$ is not empty. Moreover,	
		\[
		\begin{cases}
			-\Delta_p w_{x_0,R} + \vartheta |\nabla w_{x_0,R}|^p + \vartheta \le \frac{c_0}{w_{x_0,R}^\gamma} &\text{ in }  \Omega,\\
			-\Delta_p u + \vartheta |\nabla u|^p + \vartheta \ge \frac{c_0}{u^\gamma} &\text{ in }  \Omega,\\
			w_{x_0,R}, u > 0 &\text{ in }  \Omega,\\
			w_{x_0,R} = u &\text{ on } \partial \Omega.
		\end{cases}
		\]
		Lemma \ref{lem:wcp} can be applied to yield $u \ge w_{x_0,R}$ in $\Omega$, which is a contradiction.
		Hence $u \ge w_{x_0,R}$ in $B_R(x_0)$ with $x_{0,N} \ge R + \varepsilon$. Since $\varepsilon>0$ is arbitrary, we deduce
		\[
		u \ge w_{x_0,R} \text{ in } B_R(x_0) \text{ for all } 0<R\le R_0 \text{ and } x_0 \in \mathbb{R}^N_+ \text{ with } x_{0,N} \ge R.
		\]
		
		In particular, if $x_{0,N} = R < R_0$, then
		\[
		u(x_0) \ge w_{x_0,R}(x_0) = w(0)R^\frac{p}{\gamma+p-1} = w(0) x_{0,N}^\frac{p}{\gamma+p-1}.
		\]
		If $x_{0,N} \ge R = R_0$, then
		\[
		u(x_0) \ge w_{x_0,R}(x_0) = w(0)R_0^\frac{p}{\gamma+p-1} = t_1.
		\]
		Hence \eqref{t1_est} follows from the fact that $x_0$ is chosen arbitrarily in $\mathbb{R}^N_+$.
	\end{proof}
	
	We are ready to present proofs of Theorems \ref{th:monotonicity} and \ref{th:nonexitence}.
	\begin{proof}[Proof of Theorem \ref{th:monotonicity}]
		Due to Theorem \ref{th:boundary_behavior}, the set
		\[
		\Lambda:=\left\{\lambda>0 \mid u\leq u_\mu \text{ in } \Sigma_\mu \text{ for all } 0<\mu\leq \lambda\right\}
		\]
		is nonempty. Thus, we can define
		\[
		\overline{\lambda}=\sup\Lambda.
		\]
		To obtain the monotonicity of $u$, it suffices to show that $\overline{\lambda}=+\infty$. By contradiction arguments, we assume $\overline{\lambda}<+\infty$. Then $u\leq u_{\overline{\lambda}}$ in $\Sigma_{\overline{\lambda}}$. We can reach a contradiction by showing that for some small $\varepsilon>0$ we have
		\[
		u\leq u_\lambda \text{ in } \Sigma_\lambda \quad\text{ for all } \overline{\lambda} < \lambda<\overline{\lambda} + \varepsilon.
		\]
		
		Due to Theorem \ref{th:boundary_bound} and Lemma \ref{lem:global_lower_bound}, there exists $\tilde{\lambda}>0$ small such that
		\[
		\sup_{\Sigma_{\tilde{\lambda}}} u < \inf_{\mathbb{R}^N_+\setminus\Sigma_{\overline{\lambda}}} u.
		\]
		Therefore, we only need to show that
		\begin{equation}\label{lambda_lambda}
			u\leq u_\lambda \text{ in } \Sigma_\lambda\setminus\Sigma_{\tilde{\lambda}} \quad\text{ for all } \overline{\lambda} < \lambda<\overline{\lambda} + \varepsilon
		\end{equation}
		for some $\varepsilon\in(0,1)$.
		By Lemma \ref{lem:global_lower_bound} again, we know that
		\begin{equation}\label{key1}
			0 < \inf_{\mathbb{R}^N_+\setminus\Sigma_{\tilde{\lambda}}} u \le u, u_\lambda \le \sup_{\Sigma_{2\overline{\lambda}+2}} u \quad\text{ in } \Sigma_\lambda\setminus\Sigma_{\tilde{\lambda}}
		\end{equation}
		Hence $g(u)$ and $g(u_\lambda)$ are bounded in $\Sigma_\lambda\setminus\Sigma_{\tilde{\lambda}}$, where
		\[
		g(t) = \frac{1}{t^\gamma} + f(t).
		\]
		Therefore, by standard gradient elliptic estimates (see \cite{MR709038,MR727034}), we have
		\begin{equation}\label{key2}
			|\nabla u|, |\nabla u_\lambda| \in L^\infty(\Sigma_\lambda\setminus\Sigma_{\tilde{\lambda}})
		\end{equation}
		for every $\overline{\lambda} < \lambda<\overline{\lambda} + \varepsilon$. With \eqref{key1} and \eqref{key2} in force, we can repeat the techniques in \cite{MR3303939,MR3752525,MR3118616,MR4439897}, which are based on various comparison principles and compactness arguments for problems with a regular nonlinearity, to prove \eqref{lambda_lambda}. More precisely, if $g$ is positive and $1<p<2$, we use the arguments in \cite{MR3303939}. If $g$ is positive and $p\ge2$, we follow the ones in \cite{MR3752525,MR4750390}.	
		The details for these cases, therefore, will be omitted.
		
		When $g$ is sign-changing and \( \frac{2N+2}{N+2}<p\le2 \), we may apply the techniques in \cite{MR4439897}. However, since \cite{MR4439897} only deals with the $p$-Laplacian operator without a gradient term, we need to modify the corresponding arguments. The details will be given in the appendix for completeness.
	\end{proof}
	
	\begin{proof}[Proof of Theorem \ref{th:nonexitence}]
		Assume by contradiction that problem \eqref{pure} admits a solution $u \in W^{1,p}_{\rm loc}(\mathbb{R}^N_+)$ with $u\in L^\infty(\Sigma_{\overline\lambda})$ for some $\overline\lambda>0$. By Theorem \ref{th:boundary_behavior}, we have $u \in C^{1,\alpha}_{\rm loc}(\mathbb{R}^N_+) \cap C^s_{\rm loc}(\overline{\mathbb{R}^N_+})$ for some $\alpha\in(0,1)$ and all $s\in(0,1)$.
		By \cite[Lemmas 17 and 24]{2024arXiv240919557L}, there exist $c_1,c_2>0$ such that
		\begin{equation}\label{global_lbound}
			u(x) \ge c_1 x_N \quad\text{ in } \mathbb{R}^N_+
		\end{equation}
		and
		\begin{equation}\label{global_ubound}
			u(x) \le c_2 x_N \quad\text{ in } \mathbb{R}^N_+\setminus\Sigma_{\overline\lambda}.
		\end{equation}
		
		For each $n=1,2,3,\dots$, we set
		\[
		w_n(x) = \frac{1}{n} u(nx) \quad\text{ for } x\in\mathbb{R}^N_+,
		\]
		then $w_n$ solves
		\[
		\begin{cases}
			-\Delta_p w_n = \frac{1}{w_n} &\text{ in } \mathbb{R}^N_+,\\
			w_n>0 &\text{ in } \mathbb{R}^N_+,\\
			w_n=0 &\text{ on } \partial\mathbb{R}^N_+.
		\end{cases}
		\]
		
		By \eqref{global_lbound} and \eqref{global_ubound}, we have
		\[
		c_1 x_N \le w_n(x) \le c_2 x_N \quad\text{ in } \mathbb{R}^N_+
		\]
		for sufficiently large $n$.
		This indicates that $w_n$ is bounded from both sides by positive constants in any compact set $K \subset \mathbb{R}^N_+$. Hence up to a subsequence, $w_n \to w_\infty$ in $C^{1,\alpha'}_{\rm loc}(\mathbb{R}^N_+)$. Moreover, \eqref{global_lbound} and \eqref{global_ubound} yield
		\begin{equation}\label{w_infty_bounds=1}
			c_1 x_N \le w_\infty \le c_2 x_N \quad\text{ in } \mathbb{R}^N_+,
		\end{equation}
		which implies $w_\infty \in C(\overline{\mathbb{R}^N_+})$,
		and $w_\infty$ solves
		\[
		\begin{cases}
			-\Delta_p w_\infty = \frac{1}{w_\infty} &\text{ in } \mathbb{R}^N_+,\\
			w_\infty>0 &\text{ in } \mathbb{R}^N_+,\\
			w_\infty=0 &\text{ on } \partial\mathbb{R}^N_+.
		\end{cases}
		\]
		Then, by Theorem \ref{th:boundary_bound}, there exist $0<\lambda_0<1$, $c_1,c_2>0$ such that
		\[
		w_\infty(x) \ge C x_N \left(1-\ln x_N\right)^\frac{1}{p} \quad\text{ in } \Sigma_{\lambda_0}
		\]
		for some $C,\lambda_0>0$. However, this contradicts the upper bound in \eqref{w_infty_bounds=1}.
		
		Therefore, such a solution cannot exist.
	\end{proof}
	
	\section{On possibly unbounded solutions in strips}
	Throughout this section, we study solutions $u$ to \eqref{main} which may be unbounded in strips. More precisely, we only assume the following regularity $u \in W^{1,p}_{\rm loc}(\mathbb{R}^N_+)\cap L^\infty_{\rm loc}(\overline{\mathbb{R}^N_+})$.
	
	\subsection{Boundary behavior of solutions}
	We start with a local version of Theorem \ref{th:boundary_bound}.
	
	\begin{theorem}\label{th:boundary_bound'}
		Let $u \in W^{1,p}_{\rm loc}(\mathbb{R}^N_+)\cap L^\infty_{\rm loc}(\overline{\mathbb{R}^N_+})$ be a solution to problem \eqref{main}. Then $u\in C^{1,\alpha}_{\rm loc}(\mathbb{R}^N_+)\cap C(\overline{\mathbb{R}^N_+})$ for some $0<\alpha<1$. Moreover, for any $R>0$, the following assertions hold:
		\begin{enumerate}
			\item[(i)] When $\gamma>1$, there exist $\lambda_0,c_1,c_2>0$ such that
			\[
			c_1 x_N^\frac{p}{\gamma+p-1} \le u(x) \le c_2 x_N^\frac{p}{\gamma+p-1} \quad\text{ in } \Sigma_{\lambda_0} \cap \mathcal{C}_R.
			\]
			\item[(ii)] When $\gamma=1$, there exist $0<\lambda_0<1$, $c_1,c_2>0$ such that
			\[
			c_1 x_N \left(1-\ln x_N\right)^\frac{1}{p} \le u(x) \le c_2 x_N \left(1-\ln x_N\right)^\frac{1}{p} \quad\text{ in } \Sigma_{\lambda_0} \cap \mathcal{C}_R.
			\]
			\item[(iii)] When $0<\gamma<1$, there exist $\lambda_0,c_1,c_2>0$ such that
			\[
			c_1 x_N \le u(x) \le c_2 x_N \quad\text{ in } \Sigma_{\lambda_0} \cap \mathcal{C}_R.
			\]
		\end{enumerate}
	\end{theorem}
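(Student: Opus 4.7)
The plan is to localize the proof of Theorem \ref{th:boundary_bound}. The only place where the global assumption $u \in L^\infty(\Sigma_{\overline\lambda})$ was used there is in the choice of the constant $L = \sup_{\Sigma_{\overline\lambda}} u$, which ensured that the shifted barrier $w_{s_1,\hat y}$ dominated $u$ on $\partial B_{2\overline\lambda}(\hat y) \cap \Sigma_{\overline\lambda}$ for every $y \in \Sigma_{\overline\lambda}$. Under the weaker hypothesis $u \in L^\infty_{\rm loc}(\overline{\mathbb{R}^N_+})$, we cannot control $u$ uniformly in all of $\Sigma_{\overline\lambda}$, but for each fixed $R>0$ we still have a local bound
\[
L_R := \sup_{\overline{\Sigma_{\overline\lambda}} \cap \overline{\mathcal{C}_{R+2\overline\lambda}}} u < +\infty.
\]
Fixing $\overline\lambda = 1$ (say), this is the quantity that will replace $L$ in all three cases.

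For the upper bound, I would repeat the construction of the barriers $w_s$ of Theorem \ref{th:boundary_bound} verbatim in each of the three regimes $\gamma>1$, $\gamma=1$, $0<\gamma<1$, but now pick $s_1 = s_1(R)$ large enough that $w_{s_1} > L_R$ on $\partial B_{2\overline\lambda}(x_0)$ and $a_{s_1} > M_R$ on $\Omega$ (with $M_R$ chosen so that $\frac{1}{t^\gamma}+f(t) < \frac{M_R}{t^\gamma}$ on $(0,L_R]$). Then for every $y=(y',y_N) \in \Sigma_{\overline\lambda} \cap \mathcal{C}_R$, the translated ball $B_{2\overline\lambda}(\hat y)$ with $\hat y = (y',-\overline\lambda)$ lies inside $\mathcal{C}_{R+2\overline\lambda}$, so $u \le L_R \le w_{s_1,\hat y}$ holds on $\partial(B_{2\overline\lambda}(\hat y) \cap \Sigma_{\overline\lambda})$ and Lemma \ref{lem:wcp} yields $u \le w_{s_1,\hat y}$ on that set. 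Letting $y'$ vary in $B_R^{N-1}$ gives the desired upper bound on $\Sigma_{\overline\lambda} \cap \mathcal{C}_R$, together with continuity up to the boundary on $\overline{\mathcal{C}_R}$.

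For the lower bound, the local upper bound just obtained makes $u$ uniformly small on $\Sigma_{\lambda_0} \cap \mathcal{C}_{R+1}$ for small $\lambda_0 = \lambda_0(R)$, so that the key inequality $\frac{1}{u^\gamma}+f(u) > \frac{1}{2u^\gamma}$ holds on that set. The lower barriers $\tilde w_{s}$ (resp.\ $v_s$ in the case $0<\gamma<1$) are then built on balls $B_{\lambda_0}(\tilde x_0)$ exactly as in Theorem \ref{th:boundary_bound}, with $s$ small enough to satisfy $\tilde w_s \le (1/(2\vartheta))^{1/\gamma}$ and $\tilde b_s < 1/2$. For each $y \in \Sigma_{\lambda_0} \cap \mathcal{C}_R$ the translated ball $B_{\lambda_0}(\hat y)$ with $\hat y=(y',\lambda_0)$ lies entirely inside $\mathcal{C}_{R+1}$, which is the region on which the upper bound (hence the above inequality) is valid, so Lemma \ref{lem:wcp} gives $\tilde w_{s_0,\hat y} \le u$ on $B_{\lambda_0}(\hat y)$ and hence the desired lower bound on $\Sigma_{\lambda_0} \cap \mathcal{C}_R$.

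There is no substantial new obstacle; the only care needed is that every constant produced ($\lambda_0$, $c_1$, $c_2$, the barrier parameters $s_1, s_2, s_0$) is allowed to depend on $R$, which is compatible with the statement. Global continuity $u \in C(\overline{\mathbb{R}^N_+})$ follows from the pointwise upper bound together with positivity, exactly as in the original proof. The interior regularity $u \in C^{1,\alpha}_{\rm loc}(\mathbb{R}^N_+)$ in all three cases, and the improved regularity $u \in C^{1,\alpha}_{\rm loc}(\overline{\mathbb{R}^N_+})$ in case (iii), are obtained by the same citations (standard elliptic theory \cite{MR709038,MR727034} and \cite[Theorem B.1]{MR2341518}) applied on arbitrary relatively compact pieces of $\overline{\mathbb{R}^N_+}$, since $u$ is now only locally bounded.
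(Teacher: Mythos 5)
Your proposal is correct and follows essentially the same route as the paper: replace $L=\sup_{\Sigma_{\overline\lambda}}u$ by the local bound over $\Sigma_{\overline\lambda}\cap\mathcal{C}_{R+2\overline\lambda}$, run the same barrier constructions with $R$-dependent constants, and apply Lemma \ref{lem:wcp} on the translated sets $B_{2\overline\lambda}(\hat y)\cap\Sigma_{\overline\lambda}$ (resp.\ $B_{\lambda_0}(\hat y)$) for $y\in\Sigma_{\overline\lambda}\cap\mathcal{C}_R$. The only cosmetic point is that for the lower bound the inequality $\frac{1}{u^\gamma}+f(u)>\frac{1}{2u^\gamma}$ must hold on $\Sigma_{2\lambda_0}\cap\mathcal{C}_{R+\lambda_0}$, the region actually containing the balls $B_{\lambda_0}(\hat y)$ (not just $\Sigma_{\lambda_0}\cap\mathcal{C}_{R+1}$), which your localized upper bound on the larger cylinder indeed provides.
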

	
	\begin{proof}
		The proof is similar to that of Theorem \ref{th:boundary_bound}. The main difference is that we argue in compacts sets of $\overline{\mathbb{R}^N_+}$ instead of strips. More precisely, let any $\overline\lambda>0$ and let $x_0$, $\Omega$ be as in of \eqref{L}. Since $u \in L^\infty_{\rm loc}(\overline{\mathbb{R}^N_+})$, we can define
		\[
		L=\esssup_{\Sigma_{\overline\lambda} \cap \mathcal{C}_{R+2\overline\lambda}} u.
		\]
		Then we repeat arguments in the proof of Theorem \ref{th:boundary_bound} such that the comparison principle is applied in a subset of $\Sigma_{\overline\lambda} \cap \mathcal{C}_{R+2\overline\lambda}$ instead of $\Sigma_{\overline\lambda}$. For instance, instead of \eqref{M}, we can choose $M$ such that
		\[
		\frac{1}{u^\gamma} + f(u) < \frac{M}{u^\gamma} \quad\text{ for } x\in \Sigma_{\overline\lambda} \cap \mathcal{C}_{R+2\overline\lambda}.
		\]
		Similarly, instead of \eqref{y}, we take
		\[
		y=(y',y_N)\in\Sigma_{\overline\lambda} \cap \mathcal{C}_R,
		\]
		so that $B_{2\overline\lambda}(\hat{y})\cap\Sigma_{\overline\lambda} \subset \Sigma_{\overline\lambda} \cap \mathcal{C}_{R+2\overline\lambda}$.
		Then using the comparison principle, we can derive 
		\[
		u(x) \le C x_N^\frac{p}{\gamma+p-1} \text{ in }  \Sigma_{\overline\lambda} \cap \mathcal{C}_R \quad\text{ for some } C>0
		\]
		instead of \eqref{upper_bound>1}. Next, \eqref{lambda_0} is replaced by
		\[
		\frac{1}{u^\gamma} + f(u) > \frac{1}{2u^\gamma} \quad\text{ for } x\in \Sigma_{2\lambda_0} \cap \mathcal{C}_{R+\lambda_0}
		\]
		and \eqref{y2} is replaced by
		\[
		y=(y',y_N)\in\Sigma_{\lambda_0} \cap \mathcal{C}_R.
		\]
		Then employing the comparison principle on $B_{\lambda_0}(\hat{y})$, we can derive
		\[
		u(x) \ge c x_N^\frac{p}{\gamma+p-1} \text{ in }  \Sigma_{\lambda_0} \cap \mathcal{C}_R \quad\text{ for some } c>0.
		\]
		This proves (i). The other cases are similar.
	\end{proof}
	
	\begin{proof}[Proof of Theorem \ref{th:boundary_behavior'}]
		The proof is similar to that of Theorem \ref{th:boundary_behavior}. We clarify the differences in the case $\gamma>1$.
		
		First, we don't have \eqref{u_bound>1}. But Theorem \ref{th:boundary_bound'} ensures the following local estimate
		\[
		c_1 x_N^\frac{p}{\gamma+p-1} \le u(x) \le c_2 x_N^\frac{p}{\gamma+p-1} \quad\text{ in } \Sigma_{\lambda_0} \cap \mathcal{C}_2.
		\]
		Then, instead of \eqref{wn_bound>1}, we only have for large $n$,
		\[
		c_1 a^\frac{p}{\gamma+p-1} \le c_1 x_N^\frac{p}{\gamma+p-1} \le w_n(x) \le c_2 x_N^\frac{p}{\gamma+p-1} \le c_2 A^\frac{p}{\gamma+p-1} \quad\text{ in } \left(\Sigma_A\setminus \Sigma_a\right) \cap \mathcal{C}_{1/\varepsilon_n}.
		\]
		Nevertheless, this is enough to pass to the limit and \eqref{w_infty}, \eqref{w_infty_bounds} still hold. Then we still reach a contradiction as in the proof of Theorem \ref{th:boundary_behavior}.
		
		This proves the theorem for $\gamma>1$. The other cases can be obtained in the same way.
	\end{proof}
	
	\subsection{Monotonicity of solutions in dimension two}
	Besides Theorem \ref{th:boundary_behavior'}, other necessary tools for proving Theorem \ref{th:monotonicity'} are presented in \cite{MR4876247} (see also \cite{MR2654242,MR4290571,MR4635360,MR4771444,MR4439897}). We recall some notations used in the method of moving planes with geometric techniques in dimension two.
	
	A point in $\mathbb{R}^2$ will be denoted by $(x,y)$.
	For given $(x_0, s, \theta) \in \mathbb{R}\times \mathbb{R}_+ \times \left(-\frac{\pi}{2}, \frac{\pi}{2}\right)$, let $L_{\theta}=(\cos\theta,\sin\theta)$ and let $V_{\theta}$ be the unit vector which is orthogonal to $L_{\theta}$ and satisfies $(V_{\theta}, e_2)\ge 0$.
	\begin{figure}[htp]
		\begin{tikzpicture}[scale=1,>=stealth, font=\footnotesize, line join=round, line cap=round]
			\def\xmin{-5} \def\xmax{5}
			\def\ymin{0} \def\ymax{5}
			\tkzDefPoints{-4/0/L,0/0/A,0/2/S,4/4/B,4/2/C}
			\tkzMarkAngles[size=1cm,arc=l](C,S,B)
			\tkzLabelAngles[pos=1.2](C,S,B){$\theta$}
			\tkzFillPolygon[pattern=north east lines,opacity=0.2](S,L,A);
			\draw[->] (\xmin,0)--(\xmax,0) node [below]{$x$};
			\draw[->] (0,\ymin)--(0,\ymax) node [left]{$y$};
			\fill (0,0) circle (1pt) node[below right]{$x_{0}$};
			\fill (0,2) circle (1pt) node[below right]{$s$};
			\draw[->] (0,2)--(-1,4) node [below left]{$V_{\theta}$};
			\draw (-4,0)--(4,4);
			\draw (-4,2)--(4,2);
			\node at (4,4.6) [below]{$L_{x_{0},s,\theta}$};
			\node at (-1,0.9) [below]{$\mathcal{T}_{x_0,s,\theta}$};
		\end{tikzpicture}
		\caption{The triangle $\mathcal{T}_{x_{0}, s, \theta}$.}
		\label{fig:triangle}
	\end{figure}
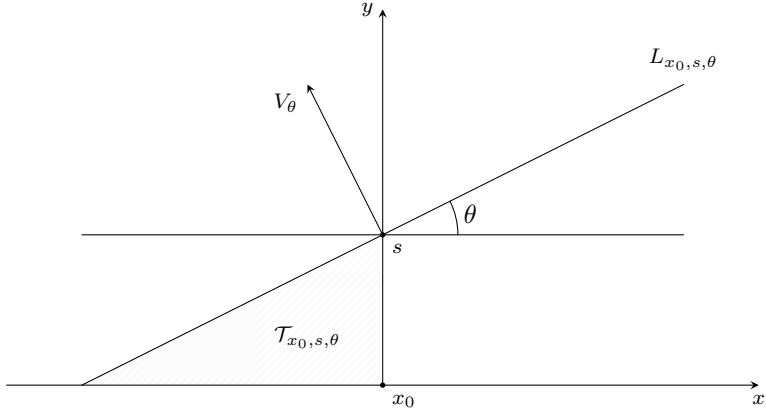
	Besides, we denote by $L_{x_0,s,\theta}$ the line which is parallel to $L_{\theta}$ and passes through $(x_0,s)$. We also denote by $\mathcal{T}_{x_0,s,\theta}$ the triangle delimited by the three lines $L_{x_0,s,\theta}$, $\{x_0\}\times \mathbb{R}$ and $\mathbb{R}\times \{0\}$ (see Figure \ref{fig:triangle}).
	
	Furthermore, for any $x\in\mathcal{T}_{x_0,s,\theta}$, we define
	\[
	u_{x_0,s,\theta}(x)=u(R_{x_0,s,\theta}(x)),
	\]
	where $R_{x_0,s,\theta}(x)$ is the symmetric point of $x$ with respect to $L_{x_0,s,\theta}$. Then $u_{x_0, s, \theta}$ solves
	\[
	-\Delta_p u_{x_0, s, \theta} + \vartheta |\nabla u_{x_0, s, \theta}|^q = \frac{1}{u_{x_0, s, \theta}^\gamma} + f(u_{x_0, s, \theta}) \quad\text{ in } \mathcal{T}_{x_0,s,\theta}
	\]
	in the weak sense.	
	For convenience, we shall denote
	\begin{equation}\label{u_s}
		u_s=u_{x_0, s, 0}.
	\end{equation}
	We also set
	\[
	g(t) := t^\gamma + f(t)
	\]
	and use the following notion
	\[
	Z_g := \{t\in(0,+\infty) \mid g(t)=0\},
	\]
	\[
	Z_{g(u)} := \{z\in\mathbb{R}^2_+ \mid g(u(z))=0\}.
	\]
	
	The weak comparison principle below is stated for dimension $N\ge2$.
	In what follows, we write a point $x\in\mathbb{R}^N$ as $x=(x',x_N)\in\mathbb{R}^{N-1}\times\mathbb{R}$. For any set $\Omega\subset\mathbb{R}^N$ we denote $\Omega'$ the projection of $\Omega$ on $\mathbb{R}^{N-1}$ in the $x_N$-direction, i.e.,
	\[
	\Omega':=\{x'\in\mathbb{R}^{N-1} \mid (x',y)\in\Omega\text{ for some }y\in\mathbb{R}\}.
	\]
	The open ball of center $x_0$ with radius $r>0$ is always denoted as $B_r(x_0)$.
	
	We recall some tools to be used in the case $p<2$. First of all, we have the following weak comparison principle.
	\begin{theorem}[Theorem 10 in \cite{MR4876247}]\label{th:wcp_p<2}
		Assume $1<p<2$, $q\ge1$ and $g$ is a locally Lipschitz continuous function.
		Let $\Omega \subset \mathbb{R}^{N}$ be a bounded domain, $L,M>0$ and $u, v \in C^1(\overline{\Omega})$ be such that
		\[
		\Omega\subset\{x\in\mathbb{R}^N \mid |x_N|\le L\},
		\]
		\[
		\|u\|_{L^{\infty}(\Omega)} + \|v\|_{L^{\infty}(\Omega)} + \|\nabla u\|_{L^{\infty}(\Omega)} + \|\nabla v\|_{L^{\infty}(\Omega)} \le M
		\]
		and
		\[
		\begin{cases}
			-\Delta_{p} u+\vartheta|\nabla u|^{q} \le g(u) &\text { in } \Omega, \\
			-\Delta_{p} v+\vartheta|\nabla v|^{q} \ge g(v) &\text { in } \Omega, \\
			u>0, v>0 &\text { in } \Omega, \\
			u \le v &\text { on } \partial \Omega,
		\end{cases}
		\]
		Assume further that
		\[
		\Omega = \bigcup_{x'\in\Omega'} \{x'\}\times (A_{x'} \cup B_{x'}),
		\]
		where measurable sets $A_{x'},B_{x'} \subset (-L,L)$ satisfy
		\[
		|A_{x'}|\le\delta \quad \text{ and } \quad B_{x'} \subset \{x_N \in (-L,L) \mid |\nabla u(x',x_N)| + |\nabla v(x',x_N)|\le \delta\}.
		\]
		Then there exists a constant
		\[
		\delta_0 = \delta_0 (N, p, q, g, M, L)
		\]
		such that if we assume $\delta \le \delta_0$, then it holds
		\[
		u \le v \text { in } \Omega.
		\]
	\end{theorem}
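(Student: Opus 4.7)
The plan is to use the standard energy method for the $p$-Laplacian together with the algebraic inequality valid in the singular case $1<p<2$,
\[
\langle |\xi|^{p-2}\xi - |\eta|^{p-2}\eta, \xi-\eta\rangle \geq c_p(|\xi|+|\eta|)^{p-2}|\xi-\eta|^2 \quad \text{for all } \xi,\eta\in\mathbb{R}^N,\ \xi\neq\eta.
\]
Since $u\le v$ on $\partial\Omega$, the function $w:=(u-v)^+$ lies in $W^{1,p}_0(\Omega)$ and is an admissible test function. Subtracting the two differential inequalities and testing with $w$ yields, schematically,
\[
c_p\int_\Omega (|\nabla u|+|\nabla v|)^{p-2}|\nabla w|^2 \leq \int_\Omega (g(u)-g(v))w\,-\,\vartheta\int_\Omega(|\nabla u|^q-|\nabla v|^q)w.
\]

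The right-hand side is controlled by elementary bounds: the local Lipschitzness of $g$ on $[0,M]$ gives $|g(u)-g(v)|\leq L_{M}w$ on $\{w>0\}$, while the assumption $q\geq1$ together with $\|\nabla u\|_\infty+\|\nabla v\|_\infty\le M$ yields $\bigl||\nabla u|^q-|\nabla v|^q\bigr|\leq qM^{q-1}|\nabla w|$. Hence
\[
\int_\Omega (|\nabla u|+|\nabla v|)^{p-2}|\nabla w|^2 \leq C_{1}\int_\Omega w^2 + C_{2}\int_\Omega w|\nabla w|,
\]
with $C_{1},C_{2}$ depending only on $N,p,q,g,M,L$.

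The heart of the proof is the fiber decomposition $\Omega=\bigcup_{x'\in\Omega'}\{x'\}\times(A_{x'}\cup B_{x'})$, which I plan to use twice. On the $B$-region, where $|\nabla u|+|\nabla v|\leq\delta$, I combine the elementary Cauchy–Schwarz
\[
\int w|\nabla w|\leq \Bigl(\int (|\nabla u|+|\nabla v|)^{p-2}|\nabla w|^2\Bigr)^{1/2}\Bigl(\int (|\nabla u|+|\nabla v|)^{2-p} w^2\Bigr)^{1/2}
\]
with the pointwise bound $(|\nabla u|+|\nabla v|)^{2-p}\leq\delta^{2-p}$ (here $p<2$ is crucial), and then apply a one-dimensional Poincaré inequality in the $x_N$-variable, available because $\Omega\subset\{|x_N|\le L\}$ and $w$ has zero trace. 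On the $A$-region I exploit $|A_{x'}|\leq\delta$ through a fiberwise Hölder estimate $\int_{A_{x'}}|w|^2\,dx_N\leq\delta^{2}\int_{A_{x'}}|\partial_{x_N}w|^2\,dx_N$, and then use $(|\nabla u|+|\nabla v|)\le 2M$ to compare the unweighted $L^{2}$ norm of $\nabla w$ with the weighted one. In both regions each right-hand contribution acquires a positive power of $\delta$.

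The main obstacle is bookkeeping of the singular weight $(|\nabla u|+|\nabla v|)^{p-2}$: it cooperates with small gradients (region $B$) but fights against large ones, forcing the use of the small-measure hypothesis on $A$. Once the two estimates are assembled, the inequality takes the form
\[
\int_\Omega (|\nabla u|+|\nabla v|)^{p-2}|\nabla w|^2 \leq \omega(\delta)\int_\Omega (|\nabla u|+|\nabla v|)^{p-2}|\nabla w|^2,
\]
with $\omega(\delta)\to 0$ as $\delta\to 0$. Choosing $\delta_0$ so small that $\omega(\delta_0)\leq 1/2$ forces the weighted Dirichlet energy of $w$ to vanish, hence $\nabla w\equiv 0$ on $\{w>0\}$, and the boundary condition $w=0$ on $\partial\Omega$ then gives $w\equiv 0$, i.e.\ $u\leq v$ in $\Omega$.
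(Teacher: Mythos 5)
Your overall strategy is the standard one behind this result (which the paper itself only quotes from \cite{MR4876247}): test with $w=(u-v)^+$, use the inequality $\langle |\xi|^{p-2}\xi-|\eta|^{p-2}\eta,\xi-\eta\rangle\ge c_p(|\xi|+|\eta|)^{p-2}|\xi-\eta|^2$ valid for $1<p<2$, control $g(u)-g(v)$ and $|\nabla u|^q-|\nabla v|^q$ by Lipschitz bounds, and then extract a factor $\omega(\delta)\to0$ from the fiber decomposition so that the right-hand side is absorbed. However, two of your concrete steps do not hold as written. First, the ``fiberwise H\"older estimate'' $\int_{A_{x'}}w^2\,dx_N\le\delta^2\int_{A_{x'}}|\partial_{x_N}w|^2\,dx_N$ is false for a general measurable set $A_{x'}$ of measure $\le\delta$: $w$ need not vanish anywhere on $A_{x'}$ (e.g.\ $w$ may be constant there, having grown on the complementary part of the fiber), so no Poincar\'e-type inequality localized to $A_{x'}$ alone is available. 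Second, and more seriously, the zeroth-order term $C_1\int_\Omega w^2$ coming from the Lipschitz bound on $g$ is only treated on the $A$-region; on the $B$-region your tools give either the plain one-dimensional Poincar\'e bound $4L^2M^{2-p}\int\rho|\nabla w|^2$ (no power of $\delta$, hence not absorbable) or an additive bound of the type $\delta^2|\Omega|$, which is useless because the final inequality must be homogeneous in the energy of $w$ to conclude $w\equiv0$.

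Both defects are repaired by one lemma, which is the actual heart of the proof: a \emph{weighted} Poincar\'e inequality obtained from the full-fiber representation. For a.e.\ $(x',x_N)\in\Omega$ one writes, with $\rho:=(|\nabla u|+|\nabla v|)^{p-2}$,
\begin{equation*}
|w(x',x_N)|\le\int_{A_{x'}}|\partial_{x_N}w|\,dt+\int_{B_{x'}}|\partial_{x_N}w|\,dt
\le\left(|A_{x'}|\,M^{2-p}+2L\,\delta^{2-p}\right)^{\frac12}\left(\int_{A_{x'}\cup B_{x'}}\rho\,|\partial_{x_N}w|^2\,dt\right)^{\frac12},
\end{equation*}
using weighted Cauchy--Schwarz with $\rho^{-1}\le M^{2-p}$ and $|A_{x'}|\le\delta$ on the $A$-portion and $\rho^{-1}\le\delta^{2-p}$ on the $B$-portion. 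Squaring and integrating over the fiber (length $\le 2L$) and over $x'$ gives
\begin{equation*}
\int_\Omega w^2\le 4L\left(\delta M^{2-p}+2L\delta^{2-p}\right)\int_\Omega\rho\,|\nabla w|^2,
\end{equation*}
which handles $\int_\Omega w^2$ on \emph{both} regions at once and also replaces your incorrect $A$-estimate; note that the smallness of $|A_{x'}|$ and the smallness of the gradients on $B_{x'}$ must be used together along each fiber, not separately region by region. With this lemma, your treatment of $\int w|\nabla w|$ (weighted Cauchy--Schwarz on $B$, the bound $\rho\ge M^{p-2}$ to pass between weighted and unweighted energies elsewhere) closes the argument exactly as you describe, yielding $\int_\Omega\rho|\nabla w|^2\le\omega(\delta)\int_\Omega\rho|\nabla w|^2$ with $\omega(\delta)\to0$, hence $w\equiv0$ for $\delta\le\delta_0(N,p,q,g,M,L)$.
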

	
	The following lemma compensates for the lack of a strong comparison principle for quasilinear elliptic equations with sign-changing nonlinearity. Together with Theorem \ref{th:wcp_p<2}, it will play a vital role in our later argument.	
	\begin{lemma}[Lemma 11 in \cite{MR4876247}]\label{lem:scp1}
		Under the assumptions of Theorem \ref{th:monotonicity'} (i), let us assume that
		\[
		\begin{cases}
			\frac{\partial u}{\partial V_{\theta}} \ge 0 &\text{ in } \mathcal{T}_{x_0,s,\theta},\\
			u\le u_{x_0, s, \theta} &\text{ in } \mathcal{T}_{x_0,s,\theta},\\
			u< u_{x_0, s, \theta} &\text{ on } \partial\mathcal{T}_{x_0,s,\theta} \setminus L_{x_0,s,\theta}
		\end{cases}
		\]
		for some $(x_0, s, \theta) \in \mathbb{R}\times \mathbb{R}_+ \times \left(-\frac{\pi}{2}, \frac{\pi}{2}\right)$.
		Then
		\begin{equation}\label{pos_deri}
			u < u_{x_0, s, \theta} \quad\text{ in } \mathcal{T}_{x_0,s,\theta} \setminus (Z_u\cap Z_{u_{x_0, s, \theta}}),
		\end{equation}
		where
		\[
		Z_u=\{z\in\mathbb{R}^2_+ \mid |\nabla u(z)|=0\}
		\]
		and
		\[
		Z_{u_{x_0, s, \theta}}=\{z\in R_{x_0,s,\theta}(\mathbb{R}^2_+) \mid |\nabla u_{x_0, s, \theta}(z)|=0\}.
		\]
	\end{lemma}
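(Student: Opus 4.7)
The plan is to argue by contradiction, following the blueprint of the analogous result \cite[Lemma~11]{MR4876247}. Set $w := u_{x_0,s,\theta} - u$, so that $w\ge 0$ in $\mathcal{T}_{x_0,s,\theta}$ and $w>0$ on $\partial\mathcal{T}_{x_0,s,\theta}\setminus L_{x_0,s,\theta}$. Suppose, for contradiction, that there is a point $z_0\in\mathcal{T}_{x_0,s,\theta}\setminus(Z_u\cap Z_{u_{x_0,s,\theta}})$ at which $w(z_0)=0$. By the $C^{1,\alpha}_{\rm loc}$ regularity of $u$ and $u_{x_0,s,\theta}$ furnished by Theorem~\ref{th:boundary_behavior'}, the set $\mathcal{N}:=\{z\in\mathcal{T}_{x_0,s,\theta}:|\nabla u(z)|+|\nabla u_{x_0,s,\theta}(z)|>0\}$ is open and contains $z_0$.

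The first step is to linearize the equation for $w$ on $\mathcal{N}$. Using the mean-value identity for $\xi\mapsto|\xi|^{p-2}\xi$ and, since $q>1$, for $\xi\mapsto|\xi|^q$, together with the local Lipschitz continuity of $t\mapsto\frac{1}{t^\gamma}+f(t)$ on every interval bounded away from $0$ (available thanks to $u,u_{x_0,s,\theta}>0$ in $\mathcal{T}_{x_0,s,\theta}$), one obtains on every compact $K\subset\mathcal{N}$ a differential inequality of the form
\[
-\mathrm{div}\bigl(A(x)\nabla w\bigr)+\vec b(x)\cdot\nabla w+c(x)\,w\ge 0,
\]
with $A$ uniformly elliptic on $K$ and $\vec b,c\in L^\infty(K)$. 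Since $w\ge 0$ attains its minimum $0$ at the interior point $z_0$, the classical strong maximum principle forces $w\equiv 0$ on the connected component $\mathcal{C}$ of $\mathcal{N}$ containing $z_0$.

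The second step is to propagate $w\equiv 0$ from $\mathcal{C}$ across the common critical set up to $\partial\mathcal{T}_{x_0,s,\theta}$, which will violate $w>0$ on $\partial\mathcal{T}_{x_0,s,\theta}\setminus L_{x_0,s,\theta}$. On $\mathcal{C}$ one has $u\equiv u_{x_0,s,\theta}$, hence $\frac{\partial u}{\partial V_\theta}\equiv\frac{\partial u_{x_0,s,\theta}}{\partial V_\theta}$; but since $R_{x_0,s,\theta}$ reverses the $V_\theta$-direction one has $\frac{\partial u_{x_0,s,\theta}}{\partial V_\theta}(z)=-\frac{\partial u}{\partial V_\theta}(R_{x_0,s,\theta}(z))$, and combined with $\frac{\partial u}{\partial V_\theta}\ge 0$ this forces $\frac{\partial u}{\partial V_\theta}\equiv 0$ on $\mathcal{C}\cup R_{x_0,s,\theta}(\mathcal{C})$. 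The discreteness of $Z_g=\{t>0:\tfrac{1}{t^\gamma}+f(t)=0\}$ together with the equation then rules out $u$ being constant on open subsets of $\mathcal{N}$, so $Z_u\cap Z_{u_{x_0,s,\theta}}$ has empty interior inside $\mathcal{T}_{x_0,s,\theta}$. In dimension two, this and the planar topology of $\mathcal{T}_{x_0,s,\theta}$ are enough, exactly as in \cite[Lemma~11]{MR4876247}, to produce a continuous path from $z_0$ to $\partial\mathcal{T}_{x_0,s,\theta}\setminus L_{x_0,s,\theta}$ along which $w\equiv 0$, contradicting the strict boundary inequality.

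The main obstacle is this propagation step: for $1<p\le 2$ the $p$-Laplace operator is singular at critical points, so the linearized strong maximum principle degenerates on $Z_u\cap Z_{u_{x_0,s,\theta}}$ and cannot be invoked globally on $\mathcal{T}_{x_0,s,\theta}$. Bridging this gap requires a careful combination of the monotonicity hypothesis $\frac{\partial u}{\partial V_\theta}\ge 0$, the discreteness of $Z_g$, and the two-dimensional geometry of the triangle, which is precisely why the strong comparison statement (and hence Theorem~\ref{th:monotonicity'}) is confined to $N=2$.
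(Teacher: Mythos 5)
The paper does \emph{not} reprove this lemma from scratch: it cites \cite[Lemma~11]{MR4876247} wholesale, and the only argument supplied is a short reduction that handles the singularity of $g(t)=t^{-\gamma}+f(t)$ at $t=0$. Concretely, by Theorem~\ref{th:boundary_behavior'} the directional derivative $\partial u/\partial V_\theta$ is strictly positive in a thin strip $\Sigma_\rho$, so $u<u_{x_0,s,\theta}$ and $(Z_u\cap Z_{u_{x_0,s,\theta}})\cap\overline{\Sigma_\rho}=\emptyset$ there; in the remaining region $\mathcal{T}_{x_0,s,\theta}\setminus\overline{\Sigma_\rho}$ the nonlinearity is genuinely locally Lipschitz, and the cited lemma applies verbatim. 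Your proposal instead tries to rebuild the proof of \cite[Lemma~11]{MR4876247} (linearization on the non-critical set, strong maximum principle, propagation across $Z_u\cap Z_{u_{x_0,s,\theta}}$ via the 2D geometry), which is a fundamentally different and much heavier route.

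Beyond the mismatch of approach, there are gaps in the attempted re-proof. The reflection-parity argument does not, by itself, give $\frac{\partial u}{\partial V_\theta}\equiv 0$ on $\mathcal{C}\cup R_{x_0,s,\theta}(\mathcal{C})$: you only know $\frac{\partial u}{\partial V_\theta}\geq0$ in $\mathcal{T}_{x_0,s,\theta}$, not in the reflected triangle, so the identity $\frac{\partial u}{\partial V_\theta}(z)=-\frac{\partial u}{\partial V_\theta}(R_{x_0,s,\theta}(z))$ yields a one-sided bound on the reflected side, not vanishing. The claim that discreteness of $Z_g$ forces $Z_u\cap Z_{u_{x_0,s,\theta}}$ to have empty interior is also unjustified as stated: if $u\equiv c$ on an open set then necessarily $c\in Z_g$, which discreteness allows, so you need the monotonicity hypothesis plus a genuine propagation argument to rule this out. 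Finally, and most importantly, you never address the actual obstruction the paper flags, namely that \cite[Lemma~11]{MR4876247} assumes $g$ locally Lipschitz on $[0,+\infty)$; your linearization bound on the zeroth-order coefficient degenerates as $x_N\to0^+$, so the inequality $-\mathrm{div}(A\nabla w)+\vec b\cdot\nabla w+cw\ge0$ with $c\in L^\infty$ cannot be obtained on compacts approaching the bottom edge of the triangle. This is precisely the gap the paper's short reduction via $\Sigma_\rho$ is designed to close.
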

	
	Actually, Lemma 11 in \cite{MR4876247} requires that $g$ is locally Lipschitz in $[0,+\infty)$. Since in our situation, $g$ is singular at zero, we will argue as follows: Exploiting Theorem \ref{th:boundary_behavior'} and the continuity and positivity of $u$, we have
	\[
	u < u_{x_0, s, \theta} \quad\text{ in } \mathcal{T}_{x_0,s,\theta} \cap \overline{\Sigma_\rho} \quad\text{ and }\quad (Z_u\cap Z_{u_{x_0, s, \theta}}) \cap \overline{\Sigma_\rho} = \emptyset
	\]
	for some small $\rho>0$. Hence, to derive \eqref{pos_deri}, it suffices to show that
	\begin{equation}\label{pos_deri'}
		u < u_{x_0, s, \theta} \quad\text{ in } \left(\mathcal{T}_{x_0,s,\theta} \setminus \overline{\Sigma_\rho}\right) \setminus (Z_u\cap Z_{u_{x_0, s, \theta}}).
	\end{equation}
	Now that in the triangle $\mathcal{T}_{x_0,s,\theta} \setminus \overline{\Sigma_\rho}$, the nonlinearity $g$ is locally Lipschitz. Thus Lemma 11 in \cite{MR4876247} can be applied to yield \eqref{pos_deri'} and this confirm the validity of Lemma \ref{lem:scp1}.
	
	We recall that the strong maximum principle for the linearized operator does not hold for sign-changing nonlinearities. However, the following weaker result will suffice for our purpose.
	\begin{lemma}[Lemma 12 in \cite{MR4876247}]\label{lem:scp2}
		Under the assumptions of Theorem \ref{th:monotonicity'} (i), let us assume that for some $\lambda>0$ we have
		\[
		u \le u_\gamma \quad\text{ in } \Sigma_{\gamma} \text{ for all } \gamma \in (0,\lambda],
		\]
		where $u_\gamma$ is defined in \eqref{u_s}.
		Then for every interval $I\subset\mathbb{R}$, there exists $\overline{x}\in I$ such that
		\[			
		\begin{aligned}
			\frac{\partial u}{\partial y}>0 &\text{ on } \{\overline{x}\} \times [0,\lambda),\\
			u<u_\gamma &\text{ on } \{\overline{x}\} \times [0,\gamma) \text{ for all } \gamma \in (0,\lambda].
		\end{aligned}
		\]
		Moreover,
		\[
		\frac{\partial u}{\partial y}>0 \quad\text{ in } \Sigma_{\lambda} \setminus Z_{g(u)}.
		\]
	\end{lemma}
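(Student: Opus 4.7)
The plan is to upgrade the reflection hypothesis first to the non-strict inequality $\partial u/\partial y \ge 0$ in $\Sigma_\lambda$, then to the strict inequality $\partial u/\partial y > 0$ in $\Sigma_\lambda \setminus Z_{g(u)}$ via a linearized strong maximum principle, and finally to extract a good vertical line $\{\overline x\}\times[0,\lambda)$ in any given interval $I$ by a meagerness argument exploiting the local finiteness of critical sets of two-dimensional elliptic equations.

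For the non-negativity, given any $0 < y_1 < y_2 < \lambda$, set $\gamma = (y_1+y_2)/2 \in (0,\lambda]$; the hypothesis applied with this $\gamma$ yields $u(x,y_1) \le u_\gamma(x,y_1) = u(x,y_2)$. Hence $u(x,\cdot)$ is non-decreasing on $(0,\lambda)$ for every $x$, which combined with the $C^{1,\alpha}_{\rm loc}$-regularity of $u$ gives $\partial u/\partial y \ge 0$ pointwise in $\Sigma_\lambda$.

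For the ``moreover'' conclusion, $v := \partial u/\partial y$ solves weakly a linearized equation whose principal part is uniformly elliptic wherever $\nabla u \ne 0$. At any point $z_0 \in \Sigma_\lambda$ with $\nabla u(z_0)=0$, the $C^{1,\alpha}$-regularity together with the equation forces $g(u(z_0))=0$, so $\{|\nabla u|=0\}\cap \Sigma_\lambda \subset Z_{g(u)}$. Consequently on each connected component $\mathcal O$ of $\Sigma_\lambda \setminus Z_{g(u)}$ the linearized operator is uniformly elliptic with locally bounded coefficients, and the classical strong maximum principle applied to $v \ge 0$ yields the dichotomy $v > 0$ or $v \equiv 0$. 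On components touching $\partial\mathbb R^2_+$ the alternative is ruled out by Theorem \ref{th:boundary_behavior'}. On interior components, $v \equiv 0$ would make $u$ constant in $y$ on each vertical slice $\{x\}\times(a,b)$ of $\mathcal O$; continuity of $u$ at $\partial\mathcal O \cap \Sigma_\lambda \subset Z_{g(u)}$ then forces $g(u)\equiv 0$ along the slice whenever $b<\lambda$, contradicting $\mathcal O \subset \Sigma_\lambda \setminus Z_{g(u)}$. The remaining borderline configuration where $b=\lambda$ is excluded by applying Lemma \ref{lem:scp1} to the reflection across $y=\gamma$ for $\gamma$ slightly below $\lambda$.

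For the existence of $\overline x \in I$, Lemma \ref{lem:scp1} applied to triangles $\mathcal T_{x_0,\gamma,\theta}$ with $\theta\to 0^+$ (whose hypotheses are verified by the preceding steps and Theorem \ref{th:boundary_behavior'}) gives $u < u_\gamma$ in $\Sigma_\gamma$ off the set $Z_u \cap Z_{u_\gamma}$. A failure of $u<u_\gamma$ on $\{\overline x\}\times[0,\gamma)$ therefore requires two critical points of $u$ on $\{\overline x\}\times[0,2\lambda)$ symmetric about $y=\gamma$, while a failure of $\partial u/\partial y > 0$ on $\{\overline x\}\times[0,\lambda)$ requires, by the ``moreover'' part, that the line meet a level curve $\{u=c\}$ with $c$ in the discrete set $Z_g$ at a point of horizontal gradient. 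By the classical local finiteness of the critical set of a nonconstant quasilinear elliptic solution in dimension two, the projection of all such exceptional points onto the $x$-axis is countable and hence meager in $\mathbb R$, so any $\overline x \in I$ avoiding it satisfies the two required strict inequalities. The main technical obstacle is ruling out $v \equiv 0$ on interior components of $\Sigma_\lambda \setminus Z_{g(u)}$, which relies on the geometric slicing argument above and a delicate use of Lemma \ref{lem:scp1}, both of which exploit the two-dimensional setting.
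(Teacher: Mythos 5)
The paper does not actually prove Lemma~\ref{lem:scp2}: it cites it as Lemma~12 of \cite{MR4876247} and appends a single remark explaining that, although the cited lemma assumes $g$ locally Lipschitz on $[0,+\infty)$, the singularity of $g$ at $0$ is harmless because Theorem~\ref{th:boundary_behavior'} already yields strict inequalities and absence of critical points in a small sub-strip $\overline{\Sigma_\rho}$ near $\partial\mathbb R^2_+$, so the cited proof applies verbatim in $\Sigma_\lambda\setminus\overline{\Sigma_\rho}$, where $g$ is Lipschitz. Your proposal instead reconstitutes a self-contained proof, which is a genuinely different route. The circle of ideas you use (upgrading $u\le u_\gamma$ to $\partial u/\partial y\ge 0$, linearized strong maximum principle on components of $\Sigma_\lambda\setminus Z_{g(u)}$, inclusion $\{|\nabla u|=0\}\subset Z_{g(u)}$, meagerness of the exceptional abscissas) is the right toolkit, and the comparison with the paper's approach is essentially ``self-contained versus delegated''.

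However the sketch has two concrete gaps. First, you invoke Lemma~\ref{lem:scp1} ``with $\theta\to 0^+$'' to conclude $u<u_\gamma$ in $\Sigma_\gamma$ off $Z_u\cap Z_{u_\gamma}$; but the conclusion of Lemma~\ref{lem:scp1} is a strict inequality that holds for each fixed $\theta\neq 0$ in the corresponding triangle, and strictness is not preserved in the limit $\theta\to 0$. The strip version has to be proved directly (for instance by a sliding/strong comparison argument on the components of $\Sigma_\gamma\setminus Z_{g(u)}$, or by the line-selection scheme of \cite{MR4876247} itself), not deduced from the triangle version by a limit. Second, your countability argument is only partially justified. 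The projection of $Z_u$ onto the $x$-axis is indeed countable by local finiteness of the $p$-Laplace critical set in two dimensions, and that handles the points obstructing $u<u_\gamma$. But the points obstructing $\partial u/\partial y>0$ lie on level sets $\{u=z_k\}$ with $\nabla u$ horizontal and nonzero; they are \emph{not} critical points of $u$, so local finiteness of $Z_u$ says nothing about them, and level sets of a $C^{1,\alpha}$ (not necessarily analytic) solution can a priori have vertical segments whose projection is a nondegenerate interval. Ruling this out requires an additional structural argument (e.g.\ unique continuation along the level set, or the fact, used in \cite{MR4876247}, that on $Z_{g(u)}$ the already-established monotonicity together with the strong comparison principle on constant-sign regions forces $\partial u/\partial y>0$ except at critical points). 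As written, the meagerness step does not close.

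A smaller loose end: the exclusion of $v\equiv 0$ on interior components whose upper boundary touches $y=\lambda$ is asserted ``by applying Lemma~\ref{lem:scp1} to the reflection across $y=\gamma$ for $\gamma$ slightly below $\lambda$'', but no argument is given why the hypotheses of that lemma (in particular the strict boundary inequality on $\partial\mathcal T\setminus L$) are met in that configuration. You would do better to follow the paper's own route: restrict to $\Sigma_\lambda\setminus\overline{\Sigma_\rho}$ where $g$ is Lipschitz, invoke the cited Lemma~12 there, and use Theorem~\ref{th:boundary_behavior'} to supply the conclusion in $\overline{\Sigma_\rho}$.
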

	Note that Lemma 12 in \cite{MR4876247} is stated under assumption that $g$ is locally Lipschitz in $[0,+\infty)$, but it also holds for singular $g$ for the same reason we mentioned above.
	
	Lastly, we present a weak comparison principle for small domains in the case $p>2$.
	\begin{theorem}[Theorem 13 in \cite{MR4876247}]\label{th:wcp_p>2}		
		Assume $p>2$, $q \ge \frac{p}{2}$ and $g$ is a locally Lipschitz continuous function such that
		\[
		g(t)>0 \text{ for } t>0.
		\]
		Let $\Omega \subset \mathbb{R}^{N}$ be a bounded domain, $L,M>0$ and $u, v \in C^1(\overline{\Omega})$ be such that
		\[
		\|u\|_{L^{\infty}(\Omega)} + \|v\|_{L^{\infty}(\Omega)} + \|\nabla u\|_{L^{\infty}(\Omega)} + \|\nabla v\|_{L^{\infty}(\Omega)} \le M
		\]
		and
		\[			
		\begin{cases}
			-\Delta_{p} u+\vartheta|\nabla u|^{q} \le g(u) &\text { in } \Omega, \\
			-\Delta_{p} v+\vartheta|\nabla v|^{q} \ge g(v) &\text { in } \Omega, \\
			u>0, v>0 &\text { in } \Omega, \\
			u \le v &\text { on } \partial \Omega.
		\end{cases}
		\]
		Then there exists a constant
		\[
		\delta_0 = \delta_0 (N, p, q, g, M)
		\]
		such that if we assume $|\Omega| \le \delta_0$, then it holds
		\[
		u \le v \text { in } \Omega.
		\]
	\end{theorem}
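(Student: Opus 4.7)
My plan is to test the difference of the weak sub- and supersolution inequalities with $w := (u-v)^+$, which is admissible in $W^{1,p}_0(\Omega)$ because $u \le v$ on $\partial\Omega$. Writing $A(\xi) := |\xi|^{p-2}\xi$ and $\Omega^+ := \{u > v\}$, subtracting yields
\[
\int_{\Omega^+}\bigl(A(\nabla u) - A(\nabla v)\bigr)\cdot\nabla w \;\le\; \int_{\Omega^+}\bigl(g(u) - g(v)\bigr)w + \vartheta\int_{\Omega^+}\bigl(|\nabla v|^q - |\nabla u|^q\bigr)w.
\]
Since $p > 2$, I would invoke the monotonicity inequality $(A(\xi)-A(\eta))\cdot(\xi-\eta)\ge c_p|\xi-\eta|^p$ so that the left-hand side dominates $c_p\int_{\Omega^+}|\nabla w|^p$. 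Lipschitz continuity of $g$ on $[0,M]$ gives $|g(u)-g(v)|\le L_g\,w$, while the hypothesis $q\ge p/2 > 1$ combined with $|\nabla u|, |\nabla v|\le M$ produces the pointwise bound $\bigl||\nabla v|^q - |\nabla u|^q\bigr|\le qM^{q-1}|\nabla w|$. After a Young inequality $|\nabla w|\,w \le \varepsilon|\nabla w|^p + C_\varepsilon w^{p/(p-1)}$ with $\varepsilon$ small enough to absorb a fraction of $\int|\nabla w|^p$ on the left, the basic estimate becomes
\[
\int_{\Omega^+}|\nabla w|^p \;\le\; C_1 \int_{\Omega^+} w^2 + C_2 \int_{\Omega^+} w^{p/(p-1)},
\]
with $C_1, C_2$ depending only on $N, p, q, g, M$.

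Next, I would exploit the smallness of $|\Omega|$ through the Sobolev--Poincaré inequality $\|w\|_{L^p(\Omega^+)} \le C_P|\Omega^+|^{1/N}\|\nabla w\|_{L^p(\Omega^+)}$ together with Hölder. Since $p > 2$, both exponents $2$ and $p/(p-1)$ on the right are strictly less than $p$. Estimating each lower-order norm by a positive power of $|\Omega|$ times $\|\nabla w\|_{L^p(\Omega^+)}$, and setting $X := \|\nabla w\|_{L^p(\Omega^+)}$, the estimate takes the form
\[
X^p \;\le\; C_3|\Omega|^{\alpha_1} X^2 + C_4|\Omega|^{\alpha_2} X^{p/(p-1)}
\]
for some $\alpha_1, \alpha_2 > 0$. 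Coupled with the a priori bound $X \le 2M|\Omega|^{1/p}$ inherited from $|\nabla w|\le 2M$, one obtains that $X$ is controlled by a positive power of $|\Omega|$ which vanishes as $|\Omega|\to 0$.

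The most delicate step is promoting this smallness of $X$ into the identity $w\equiv 0$. My plan is to follow the Damascelli--Sciunzi small-domain strategy for the $p$-Laplacian: iterate the basic estimate, using the uniform bound $\|\nabla w\|_{L^\infty}\le 2M$ and a Moser-type scheme to upgrade to an $L^\infty$-bound of the form $\|w\|_{L^\infty(\Omega^+)} \le \theta(|\Omega|)\|w\|_{L^\infty(\Omega^+)}$ with $\theta(|\Omega|) < 1$ for $|\Omega|$ below a threshold $\delta_0 = \delta_0(N,p,q,g,M)$, which forces $w \equiv 0$. The main obstacle will be making the three subcritical exponents $2$, $p/(p-1)$ and $p$ (coming respectively from the Lipschitz bound on $g$, the Young control of the gradient term, and the $p$-Laplace monotonicity) fit together so that a single $\delta_0$ governs all absorptions simultaneously; here the assumption $q \ge p/2$ is precisely the threshold ensuring that the gradient perturbation can be absorbed by Young's inequality without spoiling the contraction, and the positivity $g(t) > 0$ prevents degenerate regimes where the iteration could fail to close.
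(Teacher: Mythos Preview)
The paper does not prove this statement: it is quoted verbatim as Theorem~13 of \cite{MR4876247} and used as a black box in the proof of Theorem~\ref{th:monotonicity'}. So there is no in-paper argument to compare against; I can only comment on whether your scheme would recover the cited result.

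Your plan has a genuine gap at the closing step. After testing with $w=(u-v)^+$ and using the unweighted monotonicity $(A(\xi)-A(\eta))\cdot(\xi-\eta)\ge c_p|\xi-\eta|^p$, you arrive at an inequality of the form
\[
X^p \le C_1|\Omega|^{\alpha_1}X^2 + C_2|\Omega|^{\alpha_2}X^{p'},\qquad X:=\|\nabla w\|_{L^p(\Omega^+)}.
\]
For $p>2$ the exponents on the right are \emph{smaller} than $p$, so this yields only $X^{p-2}\le C|\Omega|^{\alpha}$, i.e.\ a quantitative bound on $X$ that shrinks with $|\Omega|$, not $X=0$. The ``Moser-type scheme'' you invoke to upgrade this to a strict $L^\infty$-contraction $\|w\|_\infty\le\theta(|\Omega|)\|w\|_\infty$ with $\theta<1$ is not substantiated, and in fact the iteration constants in Moser's method deteriorate as the domain shrinks, so no contraction emerges this way. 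Moreover, your explanations of the two structural hypotheses are off: in your Young step $|\nabla w|\,w\le\varepsilon|\nabla w|^p+C_\varepsilon w^{p'}$ the exponent $q$ has already disappeared, so $q\ge p/2$ plays no role in your absorption; and nothing in your argument uses $g>0$ beyond a vague remark.

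The approach in \cite{MR4876247} (and in the Damascelli--Sciunzi line of work it builds on) is different in a way that is essential for $p>2$. One uses the \emph{weighted} monotonicity
\[
(A(\xi)-A(\eta))\cdot(\xi-\eta)\ge c_p(|\xi|+|\eta|)^{p-2}|\xi-\eta|^2,
\]
so the left-hand side controls $\int_{\Omega^+}\rho\,|\nabla w|^2$ with weight $\rho=(|\nabla u|+|\nabla v|)^{p-2}$. The right-hand side is then absorbed via a \emph{weighted} Poincar\'e inequality $\int w^2\le C(|\Omega|)\int\rho|\nabla w|^2$ with $C(|\Omega|)\to0$; this requires integrability of $\rho^{-1}$, i.e.\ of a negative power of $|\nabla v|$, and it is precisely here that the positivity $g>0$ enters (through summability estimates for $|\nabla v|^{-1}$ coming from the equation for $v$). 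The threshold $q\ge p/2$ is what allows the gradient term, estimated by $C(|\nabla u|+|\nabla v|)^{q-1}|\nabla w|\,w$, to be split as $\rho^{1/2}|\nabla w|$ times a bounded factor, so that Cauchy--Schwarz matches it with the weighted left-hand side. Without these weighted ingredients the argument does not close for $p>2$.
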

	
	\begin{remark}
		Since Theorems \ref{th:wcp_p<2} and \ref{th:wcp_p>2} require the local Lipschitz continuity of $g$ and the boundedness of the gradient of solutions, we will only apply them to bounded domains $\Omega \subset\subset \mathbb{R}^2_+$, where such conditions hold.
	\end{remark}
	
	We have collected enough tools to present a proof of Theorem \ref{th:monotonicity'} as follows.
	\begin{proof}[Proof of Theorem \ref{th:monotonicity'}] 
		By Theorem \ref{th:boundary_bound'}, we have that $u\in C^{1,\alpha}_{\rm loc}(\mathbb{R}^N_+)\cap C(\overline{\mathbb{R}^N_+})$ for some $0<\alpha<1$. We consider two cases.
		
		\textbf{Case (i):} $\frac{3}{2} < p \le 2$, $1< q\le p$ and $Z_g$ is a discrete set.
		
		Fix some $x_0\in \mathbb{R}$.
		By Theorem \ref{th:boundary_behavior'}, there exists $h>0$ such that
		\begin{equation}\label{start_deri}
			\frac{\partial u}{\partial V_{\theta}}>0 \quad\text{ in } (x_0-h,x_0+h)\times(0,2h)
		\end{equation}
		for any $|\theta|\le \frac{\pi}{4}$.		
		To show that $\frac{\partial u}{\partial y}\ge0 \text{ in } \mathbb{R}^2_+$, we carry out the moving planes procedure in three steps.
		
		\textit{Step 1.} We show that
		\begin{equation}\label{s1}
			u \le u_{\lambda} \quad\text{ in } \Sigma_{\lambda}
		\end{equation}
		for every $0<\lambda\le h$.
		
		Clearly, for each $0<|\theta|\le \frac{\pi}{4}$, there exists $s_\theta \in (0,h]$ such that
		\begin{align*}
			\mathcal{T}_{x_0,s,\theta} \cup R_{x_0,s,\theta}(\mathcal{T}_{x_0,s,\theta}) \subset (x_0-h,x_0+h)\times(0,2h) \quad \text{ for all } 0<s\le s_\theta.
		\end{align*}
		Moreover, \eqref{start_deri} implies
		\[
		u<u_{x_0,s,\theta} \quad\text{ in } \mathcal{T}_{x_0,s,\theta}
		\]
		for all $0<|\theta|\le \frac{\pi}{4}$ and $0< s\le s_\theta$.
		Denoting
		\begin{align*}
			S_\theta=\{\tilde{s}\in(0,h] \mid u\le u_{x_0,s,\theta} \text{ in } \mathcal{T}_{x_0,s,\theta} \text{ for every } 0< s\le \tilde{s}\}.  
		\end{align*}
		Since $S_\theta\neq \emptyset$, we may set $\overline{s}:=\sup S_\theta\le h$. Using the sliding technique, we claim that
		\begin{equation}\label{s1'}
			\overline{s}=h.
		\end{equation}
		
		Assume on contrary that $\overline{s}<h$. By the definition of $\overline{s}$, we have
		\[
		\frac{\partial u}{\partial V_{\theta}} \ge 0 \quad\text{ and }\quad u\le u_{x_0,\overline{s},\theta} \quad \text{ in } \mathcal{T}_{x_0,\overline{s},\theta}.
		\]
		On the other hand, from \eqref{start_deri} and the Dirichlet boundary condition, we have
		\begin{equation}\label{boundary_cond}
			u< u_{x_0, s, \theta} \quad\text{ on } \partial\mathcal{T}_{x_0,s,\theta} \setminus L_{x_0,s,\theta} \quad\text{ for all } 0< s\le h.
		\end{equation}
		
		Therefore, we can apply Lemma \ref{lem:scp1} to deduce
		\begin{equation}\label{s1''}
			u < u_{x_0,\overline{s},\theta} \quad\text{ in } \mathcal{T}_{x_0,\overline{s},\theta} \setminus (Z_u\cap Z_{u_{x_0, \overline{s}, \theta}}).
		\end{equation}
		
		Now we take $R_0$ sufficient large such that
		\[
		\mathcal{T}_{x_0,s,\theta} \cup R_{x_0,s,\theta}(\mathcal{T}_{x_0,s,\theta}) \subset (x_0-R_0, x_0+R_0) \times (0,R_0) \quad\text{ for all } 0< s\le h.
		\]		
		By Theorem \ref{th:boundary_behavior'}, there exists $\rho>0$ such that
		\begin{equation}\label{start_deri'}
			\frac{\partial u}{\partial V_{\theta}}>0 \quad\text{ in } (x_0-R_0, x_0+R_0) \times(0,\rho)
		\end{equation}
		for any $|\theta|\le \frac{\pi}{4}$. Let $0<\rho'<\rho$ such that
		\begin{equation}\label{infsup}
			\inf_{(x_0-R_0, x_0+R_0) \times(\rho, R_0)} u > \sup_{(x_0-R_0, x_0+R_0) \times(0,\rho')} u.
		\end{equation}
		Let $\delta_0>0$ satisfy Theorem \ref{th:wcp_p<2} with
		\[
		M=2\|u\|_{L^{\infty}((x_0-R_0, x_0+R_0) \times (\rho',R_0))} + 2\|\nabla u\|_{L^{\infty}((x_0-R_0, x_0+R_0) \times (\rho',R_0))}
		\]
		and $L=h$. Notice that $M$ is finite by standard regularity in \cite{MR709038,MR727034}.
		
		We choose a sufficiently large compact set $K \subset \mathcal{T}_{x_0,\overline{s},\theta} \setminus (Z_u\cap Z_{u_{x_0, \overline{s}, \theta}})$ so that if we denote $\Omega^s:=\mathcal{T}_{x_0,s,\theta} \setminus K$, then $\Omega^{\overline{s}}$ can be decomposed as
		\[
		\Omega^{\overline{s}} = \bigcup_{x'\in(\Omega^{\overline{s}})'} \{x'\}\times (A_{x'}^{\overline{s}} \cup B_{x'}^{\overline{s}}),
		\]
		where $A_{x'}^{\overline{s}}, B_{x'}^{\overline{s}}$ satisfy
		\[
		|A_{x'}^{\overline{s}}|\le\frac{\delta_0}{2} \quad \text{ and } \quad B_{x'}^{\overline{s}} \subset \left\{x_N > 0 \mid |\nabla u(x',x_N)| + |\nabla v(x',x_N)|\le \frac{\delta_0}{2}\right\}.
		\]
		
		From \eqref{s1''}, we know that
		\[
		u \le u_{x_0,\overline{s},\theta} - C \text{ in } K \quad\text{ for some } C>0.
		\]
		By continuity, there exists $0<\varepsilon_0<h-\overline{s}$ such that for any $\overline{s}<s<\overline{s}+\varepsilon_0$, we have
		\begin{equation}\label{K_ineq}
			u < u_{x_0,s,\theta} \quad \text{ in } K
		\end{equation}
		and
		\[
		\Omega^s = \bigcup_{x'\in(\Omega^s)'} \{x'\}\times (A_{x'}^s \cup B_{x'}^s)
		\]
		with
		\[
		|A_{x'}^s|\le\delta_0 \quad \text{ and } \quad B_{x'}^s \subset \{x_N > 0 \mid |\nabla u(x',x_N)| + |\nabla v(x',x_N)|\le\delta_0\}.
		\]
		
		In view of \eqref{infsup} and \eqref{start_deri'}, we have
		\begin{equation}\label{K_ineq1}
			u \le u_{x_0,s,\theta} \quad \text{ in } \mathcal{T}_{x_0,s,\theta} \cap \Sigma_{\rho'}.
		\end{equation}
		
		From \eqref{boundary_cond}, \eqref{K_ineq} and \eqref{K_ineq1}, we
		369 have $u \le u_{x_0,s,\theta}$ on $\partial(\mathcal{T}_{x_0,s,\theta} \setminus (K\cup \Sigma_{\rho'}))$. Therefore, Theorem \ref{th:wcp_p<2} can be applied with $v=u_{x_0,s,\theta}$ and $\Omega=\Omega^s \setminus \overline{\Sigma_{\rho'}}$ to yield
		\begin{equation}\label{K_ineq2}
			u \le u_{x_0,s,\theta} \quad \text{ in } \mathcal{T}_{x_0,s,\theta} \setminus (K\cup \Sigma_{\rho'}).
		\end{equation}
		
		Combining \eqref{K_ineq}, \eqref{K_ineq1} and \eqref{K_ineq2}, we get
		\[
		u \le u_{x_0,s,\theta} \quad \text{ in } \mathcal{T}_{x_0,s,\theta}
		\]
		for all $\overline{s}<s<\overline{s}+\varepsilon_0$. This contradicts the definition of $\overline{s}$ and \eqref{s1'} is proved. Hence for every $0<\lambda\le h$ and $0<|\theta|\le \frac{\pi}{4}$, we have
		\[
		u\le u_{x_0,\lambda,\theta} \quad\text{ in } \mathcal{T}_{x_0,\lambda,\theta}.
		\]
		
		Taking the limit as $\theta\to0^+$ and $\theta\to0^-$, we obtain \eqref{s1}. Hence the set
		\[
		\Lambda=\{\tilde{\lambda}>0 \mid u \le u_{\lambda}\text{ in } \Sigma_{\lambda} \text{ for all } 0<\lambda\le\tilde{\lambda}\}
		\]
		is nonempty.
		
		\textit{Step 2.} Setting $\overline{\lambda}:=\sup\Lambda$, we claim that
		\begin{equation}\label{s2}
			\overline{\lambda}=\infty.
		\end{equation}
		
		Assume by contradiction that $\overline{\lambda}<\infty$. From the definition of $\overline{\lambda}$ we have
		\[
		u \le u_\lambda \text{ in } \Sigma_\lambda \quad \text{ for all } \lambda \in (0,\overline{\lambda}]
		\]
		and hence
		\[
		\frac{\partial u}{\partial y} \ge 0 \quad\text{ in } \overline{\Sigma_{\overline{\lambda}}}.
		\]
		
		We show that there exists some $\overline{x}\in\mathbb{R}$ such that
		\begin{equation}\label{post_bound}
			\frac{\partial u}{\partial y}(\overline{x},\overline{\lambda})>0.
		\end{equation}
		Suppose by contradiction that $\frac{\partial u}{\partial y}(x,\overline{\lambda})= 0$ for all $x\in\mathbb{R}$. There are two possibilities:
		
		\textit{- Possibility 1:} $g(u(x,\overline{\lambda})) = 0$ for all $x\in\mathbb{R}$. Since $Z_g$ is discrete, we deduce $u(x,\overline{\lambda}) = \mu$ for all $x\in\mathbb{R}$ and some $\mu\in Z_g$. Then $w := \mu - u \ge 0$ in $\Sigma_{\overline\lambda}$. Moreover,
		\[
		-\Delta_p w - \vartheta|\nabla w|^q + C w^{p-1} = -g(\mu-w) + C w^{p-1} \ge 0 \quad\text{ in } B_{\frac{\overline{\lambda}}{2}}((x,\overline{\lambda}/2))
		\]
		for sufficiently large $C$. Hence the strong maximum principle (see \cite[Theorems 2.5.1 and 5.5.1]{MR2356201}) implies $w\equiv0$ or $w>0$ in $B_{\frac{\overline{\lambda}}{2}}((x,\overline{\lambda}/2))$. The former cannot happen because $w(x,0)=\mu-u(x,0)=\mu>0$. Hence $w>0$ in $B_{\frac{\overline{\lambda}}{2}}((x,\overline{\lambda}/2))$. Now the Hopf lemma yields $\frac{\partial w}{\partial y}(x,\overline{\lambda}) < 0$, which means $\frac{\partial u}{\partial y}(x,\overline{\lambda}) > 0$ for all $x\in\mathbb{R}$.
		
		\textit{- Possibility 2:} $g(u(x_0,\overline{\lambda})) \ne 0$ for some $x_0 \in \mathbb{R}$. By continuity, there exists small $r>0$ such that $g(u)$ is either strictly positive or strictly negative in $B_r((x_0,\overline{\lambda}))$. Let us define
		\[
		u_{*}(x,y)=
		\begin{cases}
			u(x,y)&\text{ if } 0\le y\le \overline{\lambda},\\
			u\left(x,2\overline{\lambda}-y\right)&\text{ if } \overline{\lambda}\le y\le 2\overline{\lambda},
		\end{cases}
		\]
		and
		\[
		u^{*}(x,y)=
		\begin{cases}
			u(x,2\overline{\lambda}-y)&\text{ if } 0\le y\le \overline{\lambda},\\
			u(x,y)&\text{ if } \overline{\lambda}\le y\le 2\overline{\lambda}.
		\end{cases}
		\]
		Since $u\in C^1(\mathbb{R}^2_+)$ and $\frac{\partial u}{\partial y}=0$ on $\mathbb{R}\times\{\overline{\lambda}\}$, we get that $u_{*}, u^{*}\in C^1(\mathbb{R}^2_+)$ and $u_{*}, u^{*}$ are weak solutions of
		\[
		-\Delta_p w + \vartheta|\nabla w|^q = g(w) \quad\text{ in } B_r((x_0,\overline{\lambda})).
		\]
		On the other hand, $u_{*} \le u^{*}$ in $\Sigma_{2\overline{\lambda}}$ and $u_{*}=u^{*}$ on $\Sigma_{\overline{\lambda}}$. By the strong comparison principle for positive nonlinearity (see \cite[Theorem 1.2]{MR3859194}) we deduce $u_*=u^*$ in $B_r((x_0,\overline{\lambda}))$. That means $u=u_{\overline{\lambda}}$ in $B_r((x_0,\overline{\lambda}))$. However, this contradicts Lemma \ref{lem:scp2} with $I=(x_0-r,x_0+r)$.
		
		Hence \eqref{post_bound} holds. This implies $\frac{\partial u}{\partial y}>0$ in $B_{\overline{r}}((\overline{x},\overline{\lambda}))$ for some $\overline{r}>0$. Exploiting this fact and Lemma \ref{lem:scp2} with $I=(\overline{x}-\overline{r},\overline{x}+\overline{r})$, we find $x_1 \in (\overline{x}-\overline{r},\overline{x}+\overline{r})$ and $\tilde{\varepsilon}>0$ such that
		\begin{equation}\label{key_line0}
			\begin{aligned}
				\frac{\partial u}{\partial y}>0 &\text{ on } \{x_1\} \times [0,\overline{\lambda}+\tilde{\varepsilon}],\\
				u<u_{\lambda} &\text{ on } \{x_1\} \times [0,\lambda) \text{ for all } \lambda \in (0,\overline{\lambda}].
			\end{aligned}
		\end{equation}
		
		From the second assertion in \eqref{key_line0} we have
		\[
		u_{\overline{\lambda}} - u > C \text{ on the compact set } \{x_1\} \times [0,\overline{\lambda}-\tilde{\varepsilon}/2]
		\]
		for some $C>0$. (We may reduce $\tilde{\varepsilon}$ if necessary so that $\overline{\lambda}-\tilde{\varepsilon}/2>0$.)
		Hence there exists $0<\overline{\varepsilon} < \tilde{\varepsilon}/4$ such that
		\[
		u_\lambda - u > \frac{C}{2} > 0 \text{ on } \{x_1\} \times [0,\overline{\lambda}-\tilde{\varepsilon}/2] \text{ for all } \lambda \in [\overline{\lambda},\overline{\lambda}+\overline{\varepsilon}].
		\]
		On the other hand, exploiting the first assertion in \eqref{key_line0}, we deduce
		\[
		u_\lambda - u > 0 \text{ on } \{x_1\} \times [\overline{\lambda}-\tilde{\varepsilon}/2,\lambda) \text{ for all } \lambda \in [\overline{\lambda},\overline{\lambda}+\overline{\varepsilon}].
		\]
		
		Therefore,
		\begin{equation}\label{key_line}
			u<u_{\lambda} \text{ on } \{x_1\} \times [0,\lambda) \text{ for all } \lambda \in (0,\overline{\lambda}+\overline{\varepsilon}].
		\end{equation}
		
		This implies the existence of some small $\theta_0>0$ such that
		\begin{equation}\label{boundary_cond'}
			u< u_{x_1, s, \theta} \text{ on } \{x_1\}\times [0,s] \quad\text{ for all } 0< s\le \overline{\lambda}+\overline{\varepsilon} \text{ and } 0<|\theta|<\theta_0.
		\end{equation}
		Indeed, assume \eqref{boundary_cond'} does not hold. Then there exist $y_n,s_n,\theta_n$ for each $n\in\mathbb{N}$ such that
		\begin{equation}\label{seq}
			u(x_1,y_n) \ge u_{x_1, s_n, \theta_n}(x_1,y_n), \quad 0<y_n\le s_n\le \overline{\lambda}+\overline{\varepsilon},\quad 0<|\theta_n|<\frac{1}{n}.
		\end{equation}
		
		Up to a subsequence, we may assume $(y_n,s_n,\theta_n) \to (y_0,s_0,0)$ with $0\le y_0\le s_0\le \overline{\lambda}+\overline{\varepsilon}$. Hence the first inequality in \eqref{seq} gives
		\[
		u(x_1,y_0) \ge u_{s_0}(x_1,y_0).
		\]
		
		In views of \eqref{key_line}, this only happens if $y_0=s_0$. The first inequality in \eqref{seq} now yields
		\[
		\frac{\partial u}{\partial y}(x_1,s_0)\le 0,
		\]
		which is a contradiction with the first inequality of \eqref{key_line0}. Hence \eqref{boundary_cond'} must hold.
		
		Combining \eqref{boundary_cond'} with the Dirichlet boundary condition of $u$, we have
		\begin{equation}\label{boundary_cond2}
			u< u_{x_1, s, \theta} \text{ on } \partial\mathcal{T}_{x_1,s,\theta} \setminus L_{x_1,s,\theta} \quad\text{ for all } 0< s\le \overline{\lambda}+\overline{\varepsilon} \text{ and } 0<|\theta|<\theta_0.
		\end{equation}
		
		With the help of \eqref{boundary_cond2}, for each $0<|\theta|<\min\{\frac{\pi}{4},\theta_0\}$, we can repeat the sliding technique in Step 1 to show that
		\[
		u\le u_{x_1,\lambda,\theta} \text{ in } \mathcal{T}_{x_0,\lambda,\theta} \quad\text{ for all } 0< \lambda\le \overline{\lambda}+\overline{\varepsilon}.
		\]
		By letting $\theta\to0$, we obtain $u\le u_{\lambda}$ in $\Sigma_{\lambda}$ for all $0<\lambda\le\overline{\lambda}+\overline{\varepsilon}$. This contradicts the definition of $\overline{\lambda}$. Thus, \eqref{s2} is proved.
		
		\textit{Step 3.} Conclusion.
		
		Step 2 implies that $u$ is monotone increasing with respect to the $y$-direction. Moreover, Lemma \ref{lem:scp2} indicates
		\[
		\frac{\partial u}{\partial y} > 0 \quad\text{ in } \mathbb{R}^2_+ \setminus Z_{g(u)}.
		\]
		
		\textbf{Case 2:} $p>2$, $p-1\le q\le p$ and $g(t)>0$ for $t>0$.
		
		The proof of this case is similar to that of Case 1. Instead of using Theorem \ref{th:wcp_p<2}, Lemma \ref{lem:scp1} and Lemma \ref{lem:scp2}, we need to exploit Theorem \ref{th:wcp_p>2}, Theorem 1.2 in \cite{MR3859194} and Theorem 1.3 in \cite{MR4022661}, respectively. Notice that the strong comparison principle (Theorem 1.2 in \cite{MR3859194}) and the strong maximum principle for the linearized equation (Theorem 1.3 in \cite{MR4022661}) hold for any domain $\Omega\subset\mathbb{R}^2_+$ thanks to the fact that $g$ is positive. 
		
		The details therefore will be omitted.
	\end{proof}
	
	\section*{Statements and Declarations}
	
	
	\textbf{Data Availability} Data sharing not applicable to this article as no datasets were generated or analysed during the current study.
	
	\textbf{Competing Interests} The author has no competing interests to declare that are relevant to the content of this article.
	
	\section*{Acknowledgments}
	The author conducted this work while visiting the Vietnam Institute for Advanced Study in Mathematics (VIASM) in 2025 and thanks the institute for its warm hospitality and research support.
	
	\bibliographystyle{abbrvurl}
	\bibliography{../../../references}

\begin{thebibliography}{10}

\bibitem{MR143162}
A.~D. Alexandrov.
\newblock A characteristic property of spheres.
\newblock {\em Ann. Mat. Pura Appl. (4)}, 58:303--315, 1962.
\newblock \href {https://doi.org/10.1007/BF02413056}
  {\path{doi:10.1007/BF02413056}}.

\bibitem{MR1655510}
H.~Berestycki, L.~Caffarelli, and L.~Nirenberg.
\newblock Further qualitative properties for elliptic equations in unbounded
  domains.
\newblock {\em Ann. Scuola Norm. Sup. Pisa Cl. Sci. (4)}, 25(1-2):69--94
  (1998), 1997.
\newblock Dedicated to Ennio De Giorgi.
\newblock URL: \url{http://www.numdam.org/item?id=ASNSP_1997_4_25_1-2_69_0}.

\bibitem{MR1470317}
H.~Berestycki, L.~A. Caffarelli, and L.~Nirenberg.
\newblock Monotonicity for elliptic equations in unbounded {L}ipschitz domains.
\newblock {\em Comm. Pure Appl. Math.}, 50(11):1089--1111, 1997.
\newblock \href
  {https://doi.org/10.1002/(SICI)1097-0312(199711)50:11<1089::AID-CPA2>3.0.CO;2-6}
  {\path{doi:10.1002/(SICI)1097-0312(199711)50:11<1089::AID-CPA2>3.0.CO;2-6}}.

\bibitem{MR2592976}
L.~Boccardo and L.~Orsina.
\newblock Semilinear elliptic equations with singular nonlinearities.
\newblock {\em Calc. Var. Partial Differential Equations}, 37(3-4):363--380,
  2010.
\newblock \href {https://doi.org/10.1007/s00526-009-0266-x}
  {\path{doi:10.1007/s00526-009-0266-x}}.

\bibitem{MR3478284}
A.~Canino, B.~Sciunzi, and A.~Trombetta.
\newblock Existence and uniqueness for {$p$}-{L}aplace equations involving
  singular nonlinearities.
\newblock {\em NoDEA Nonlinear Differential Equations Appl.}, 23(2):Art. 8, 18,
  2016.
\newblock \href {https://doi.org/10.1007/s00030-016-0361-6}
  {\path{doi:10.1007/s00030-016-0361-6}}.

\bibitem{MR427826}
M.~G. Crandall, P.~H. Rabinowitz, and L.~Tartar.
\newblock On a {D}irichlet problem with a singular nonlinearity.
\newblock {\em Comm. Partial Differential Equations}, 2(2):193--222, 1977.
\newblock \href {https://doi.org/10.1080/03605307708820029}
  {\path{doi:10.1080/03605307708820029}}.

\bibitem{MR2076772}
L.~Damascelli and F.~Gladiali.
\newblock Some nonexistence results for positive solutions of elliptic
  equations in unbounded domains.
\newblock {\em Rev. Mat. Iberoamericana}, 20(1):67--86, 2004.
\newblock URL: \url{https://dx.doi.org/10.4171/RMI/380}, \href
  {https://doi.org/10.4171/RMI/380} {\path{doi:10.4171/RMI/380}}.

\bibitem{MR1648566}
L.~Damascelli and F.~Pacella.
\newblock Monotonicity and symmetry of solutions of {$p$}-{L}aplace equations,
  {$1<p<2$}, via the moving plane method.
\newblock {\em Ann. Scuola Norm. Sup. Pisa Cl. Sci. (4)}, 26(4):689--707, 1998.
\newblock URL: \url{http://www.numdam.org/item?id=ASNSP_1998_4_26_4_689_0}.

\bibitem{MR2096703}
L.~Damascelli and B.~Sciunzi.
\newblock Regularity, monotonicity and symmetry of positive solutions of
  {$m$}-{L}aplace equations.
\newblock {\em J. Differential Equations}, 206(2):483--515, 2004.
\newblock \href {https://doi.org/10.1016/j.jde.2004.05.012}
  {\path{doi:10.1016/j.jde.2004.05.012}}.

\bibitem{MR2654242}
L.~Damascelli and B.~Sciunzi.
\newblock Monotonicity of the solutions of some quasilinear elliptic equations
  in the half-plane, and applications.
\newblock {\em Differential Integral Equations}, 23(5-6):419--434, 2010.

\bibitem{MR1190345}
E.~N. Dancer.
\newblock Some notes on the method of moving planes.
\newblock {\em Bull. Austral. Math. Soc.}, 46(3):425--434, 1992.
\newblock \href {https://doi.org/10.1017/S0004972700012089}
  {\path{doi:10.1017/S0004972700012089}}.

\bibitem{MR2525168}
E.~N. Dancer.
\newblock Some remarks on half space problems.
\newblock {\em Discrete Contin. Dyn. Syst.}, 25(1):83--88, 2009.
\newblock \href {https://doi.org/10.3934/dcds.2009.25.83}
  {\path{doi:10.3934/dcds.2009.25.83}}.

\bibitem{MR3136107}
L.~M. De~Cave.
\newblock Nonlinear elliptic equations with singular nonlinearities.
\newblock {\em Asymptot. Anal.}, 84(3-4):181--195, 2013.
\newblock \href {https://doi.org/10.3233/asy-131173}
  {\path{doi:10.3233/asy-131173}}.

\bibitem{MR709038}
E.~DiBenedetto.
\newblock {$C\sp{1+\alpha }$} local regularity of weak solutions of degenerate
  elliptic equations.
\newblock {\em Nonlinear Anal.}, 7(8):827--850, 1983.
\newblock \href {https://doi.org/10.1016/0362-546X(83)90061-5}
  {\path{doi:10.1016/0362-546X(83)90061-5}}.

\bibitem{MR4439897}
F.~Esposito, A.~Farina, L.~Montoro, and B.~Sciunzi.
\newblock Monotonicity of positive solutions to quasilinear elliptic equations
  in half-spaces with a changing-sign nonlinearity.
\newblock {\em Calc. Var. Partial Differential Equations}, 61(4):Paper No. 154,
  14, 2022.
\newblock \href {https://doi.org/10.1007/s00526-022-02250-3}
  {\path{doi:10.1007/s00526-022-02250-3}}.

\bibitem{MR4044739}
F.~Esposito and B.~Sciunzi.
\newblock On the {H}\"{o}pf boundary lemma for quasilinear problems involving
  singular nonlinearities and applications.
\newblock {\em J. Funct. Anal.}, 278(4):108346, 25, 2020.
\newblock \href {https://doi.org/10.1016/j.jfa.2019.108346}
  {\path{doi:10.1016/j.jfa.2019.108346}}.

\bibitem{MR3303939}
A.~Farina, L.~Montoro, G.~Riey, and B.~Sciunzi.
\newblock Monotonicity of solutions to quasilinear problems with a first-order
  term in half-spaces.
\newblock {\em Ann. Inst. H. Poincar\'{e} Anal. Non Lin\'{e}aire}, 32(1):1--22,
  2015.
\newblock \href {https://doi.org/10.1016/j.anihpc.2013.09.005}
  {\path{doi:10.1016/j.anihpc.2013.09.005}}.

\bibitem{MR3118616}
A.~Farina, L.~Montoro, and B.~Sciunzi.
\newblock Monotonicity of solutions of quasilinear degenerate elliptic equation
  in half-spaces.
\newblock {\em Math. Ann.}, 357(3):855--893, 2013.
\newblock \href {https://doi.org/10.1007/s00208-013-0919-0}
  {\path{doi:10.1007/s00208-013-0919-0}}.

\bibitem{MR3752525}
A.~Farina, L.~Montoro, and B.~Sciunzi.
\newblock Monotonicity in half-space of positive solutions to
  {$-\Delta_pu=f(u)$} in the case {$p>2$}.
\newblock {\em Ann. Sc. Norm. Super. Pisa Cl. Sci. (5)}, 17(4):1207--1229,
  2017.
\newblock \href {https://doi.org/10.2422/2036-2145.201511_006}
  {\path{doi:10.2422/2036-2145.201511_006}}.

\bibitem{MR2341518}
J.~Giacomoni, I.~Schindler, and P.~Tak\'{a}\v{c}.
\newblock Sobolev versus {H}\"{o}lder local minimizers and existence of
  multiple solutions for a singular quasilinear equation.
\newblock {\em Ann. Sc. Norm. Super. Pisa Cl. Sci. (5)}, 6(1):117--158, 2007.

\bibitem{MR544879}
B.~Gidas, W.~M. Ni, and L.~Nirenberg.
\newblock Symmetry and related properties via the maximum principle.
\newblock {\em Comm. Math. Phys.}, 68(3):209--243, 1979.
\newblock URL: \url{http://projecteuclid.org/euclid.cmp/1103905359}.

\bibitem{MR1814364}
D.~Gilbarg and N.~S. Trudinger.
\newblock {\em Elliptic partial differential equations of second order}.
\newblock Classics in Mathematics. Springer-Verlag, Berlin, 2001.
\newblock Reprint of the 1998 edition.

\bibitem{MR1263903}
C.~Gui and F.-H. Lin.
\newblock Regularity of an elliptic problem with a singular nonlinearity.
\newblock {\em Proc. Roy. Soc. Edinburgh Sect. A}, 123(6):1021--1029, 1993.
\newblock \href {https://doi.org/10.1017/S030821050002970X}
  {\path{doi:10.1017/S030821050002970X}}.

\bibitem{MR2337497}
T.~Kilpel\"{a}inen, H.~Shahgholian, and X.~Zhong.
\newblock Growth estimates through scaling for quasilinear partial differential
  equations.
\newblock {\em Ann. Acad. Sci. Fenn. Math.}, 32(2):595--599, 2007.

\bibitem{MR1037213}
A.~C. Lazer and P.~J. McKenna.
\newblock On a singular nonlinear elliptic boundary-value problem.
\newblock {\em Proc. Amer. Math. Soc.}, 111(3):721--730, 1991.
\newblock \href {https://doi.org/10.2307/2048410} {\path{doi:10.2307/2048410}}.

\bibitem{MR4635360}
P.~Le.
\newblock Gibbons' conjecture for quasilinear elliptic equations involving a
  gradient term.
\newblock {\em Forum Math.}, 35(5):1419--1434, 2023.
\newblock \href {https://doi.org/10.1515/forum-2022-0360}
  {\path{doi:10.1515/forum-2022-0360}}.

\bibitem{2024arXiv240919557L}
P.~{Le}.
\newblock {Quasilinear elliptic problems with singular nonlinearities in
  half-spaces}.
\newblock {\em arXiv e-prints}, page arXiv:2409.19557, Sept. 2024.
\newblock \href {https://arxiv.org/abs/2409.19557} {\path{arXiv:2409.19557}},
  \href {https://doi.org/10.48550/arXiv.2409.19557}
  {\path{doi:10.48550/arXiv.2409.19557}}.

\bibitem{2024arXiv240900365L}
P.~{Le}.
\newblock {Singular semilinear elliptic equations in half-spaces}.
\newblock {\em arXiv e-prints}, page arXiv:2409.00365, Aug. 2024.
\newblock \href {https://arxiv.org/abs/2409.00365} {\path{arXiv:2409.00365}},
  \href {https://doi.org/10.48550/arXiv.2409.00365}
  {\path{doi:10.48550/arXiv.2409.00365}}.

\bibitem{MR4771444}
P.~Le.
\newblock Sliding method and one-dimensional symmetry for {$p$}-{L}aplace
  equations.
\newblock {\em Rev. R. Acad. Cienc. Exactas F\'{\i}s. Nat. Ser. A Mat. RACSAM},
  118(4):Paper No. 141, 2024.
\newblock \href {https://doi.org/10.1007/s13398-024-01642-8}
  {\path{doi:10.1007/s13398-024-01642-8}}.

\bibitem{2025arXiv250807360L}
P.~{Le}.
\newblock {Boundary estimates for singular elliptic problems involving a
  gradient term}.
\newblock {\em arXiv e-prints}, page arXiv:2508.07360, Aug. 2025.
\newblock \href {https://arxiv.org/abs/2508.07360} {\path{arXiv:2508.07360}}.

\bibitem{MR4902901}
P.~Le.
\newblock Monotonicity in half-spaces for {$p$}-{L}aplace problems with a
  sublinear nonlinearity.
\newblock {\em Potential Anal.}, 62(4):843--860, 2025.
\newblock \href {https://doi.org/10.1007/s11118-024-10157-1}
  {\path{doi:10.1007/s11118-024-10157-1}}.

\bibitem{MR4750390}
P.~Le and N.~V. Huynh.
\newblock Monotonicity of solutions to degenerate {$p$}-{L}aplace problems with
  a gradient term in half-spaces.
\newblock {\em Anal. Math. Phys.}, 14(3):Paper No. 71, 24, 2024.
\newblock \href {https://doi.org/10.1007/s13324-024-00933-y}
  {\path{doi:10.1007/s13324-024-00933-y}}.

\bibitem{MR3859194}
S.~Merch\'{a}n, L.~Montoro, and B.~Sciunzi.
\newblock On the {H}arnack inequality for quasilinear elliptic equations with a
  first-order term.
\newblock {\em Proc. Roy. Soc. Edinburgh Sect. A}, 148(5):1075--1095, 2018.
\newblock \href {https://doi.org/10.1017/S030821051700035X}
  {\path{doi:10.1017/S030821051700035X}}.

\bibitem{MR4022661}
L.~Montoro.
\newblock Harnack inequalities and qualitative properties for some quasilinear
  elliptic equations.
\newblock {\em NoDEA Nonlinear Differential Equations Appl.}, 26(6):Paper No.
  45, 33, 2019.
\newblock \href {https://doi.org/10.1007/s00030-019-0591-5}
  {\path{doi:10.1007/s00030-019-0591-5}}.

\bibitem{MR4290571}
L.~Montoro.
\newblock Monotonicity of positive solutions to {$-\Delta_pu+a(u)|\nabla
  u|^q=f(u)$} in the half-plane in the case {$p\le2$}.
\newblock In {\em Harnack inequalities and nonlinear operators}, volume~46 of
  {\em Springer INdAM Ser.}, pages 159--174. Springer, Cham, [2021] \copyright
  2021.
\newblock URL: \url{https://doi.org/10.1007/978-3-030-73778-8_8}, \href
  {https://doi.org/10.1007/978-3-030-73778-8\_8}
  {\path{doi:10.1007/978-3-030-73778-8\_8}}.

\bibitem{MR4876247}
H.~T. Nguyen, P.~Le, and T.~C. Vo.
\newblock Monotonicity for quasilinear elliptic problems with a sign-changing
  nonlinearity in half-planes.
\newblock {\em C. R. Math. Acad. Sci. Paris}, 363:69--87, 2025.
\newblock \href {https://doi.org/10.5802/crmath.700}
  {\path{doi:10.5802/crmath.700}}.

\bibitem{MR2356201}
P.~Pucci and J.~Serrin.
\newblock {\em The maximum principle}, volume~73 of {\em Progress in Nonlinear
  Differential Equations and their Applications}.
\newblock Birkh\"{a}user Verlag, Basel, 2007.

\bibitem{MR333220}
J.~Serrin.
\newblock A symmetry problem in potential theory.
\newblock {\em Arch. Rational Mech. Anal.}, 43:304--318, 1971.
\newblock \href {https://doi.org/10.1007/BF00250468}
  {\path{doi:10.1007/BF00250468}}.

\bibitem{MR727034}
P.~Tolksdorf.
\newblock Regularity for a more general class of quasilinear elliptic
  equations.
\newblock {\em J. Differential Equations}, 51(1):126--150, 1984.
\newblock \href {https://doi.org/10.1016/0022-0396(84)90105-0}
  {\path{doi:10.1016/0022-0396(84)90105-0}}.

\end{thebibliography}
	
	\section*{Appendix}
	In this appendix, we provide a detailed proof for Theorem \ref{th:monotonicity} in case \ref{f1} holds.
	We denote $g(t) := \frac{1}{t^\gamma} + f(t)$ for $t>0$, $Z_g := \{t\in(0,+\infty) \mid g(t)=0\}$ and
	\[Z_u = \{ x \in \mathbb{R}_+^N \mid \nabla u(x) = 0 \},\]
	\[Z_{g(u)} = \{ x \in \mathbb{R}_+^N \mid g(u(x)) = 0 \}.\]
	We aim to prove the following.
	\begin{proposition}\label{prop:monotonicity}
		Assume that \ref{f1} holds and $1< q\le p$. Let $u \in W^{1,p}_{\rm loc}(\mathbb{R}^N_+)$ be a solution to problem \eqref{main} with $u\in L^\infty(\Sigma_\lambda)$ for all $\lambda>0$. Then $u$ is monotone increasing in $x_N$-direction, that is,
		\[
		\frac{\partial u}{\partial x_N} \ge 0 \quad\text{ in } \mathbb{R}^N_+.
		\]
	\end{proposition}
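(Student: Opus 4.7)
The plan is to run the moving plane method along the scheme sketched in the main text of Theorem \ref{th:monotonicity}, using Theorem \ref{th:wcp_p<2} in place of the weak comparison principle in small domains that powers \cite{MR4439897}. Since Theorem \ref{th:wcp_p<2} already incorporates the first-order term $\vartheta|\nabla u|^q$, the adaptation amounts to adjusting the strong comparison and propagation ingredients of \cite{MR4439897} so that they remain valid for the operator $-\Delta_p u + \vartheta|\nabla u|^q$. I will start by setting
\[
\Lambda := \{\lambda > 0 \mid u \le u_\mu \text{ in } \Sigma_\mu \text{ for every } 0 < \mu \le \lambda\}.
\]
Theorem \ref{th:boundary_behavior} produces a strip where $\partial u/\partial x_N > 0$, so $\Lambda \ne \emptyset$; let $\bar\lambda := \sup \Lambda$ and, arguing by contradiction, assume $\bar\lambda < +\infty$, so that $u \le u_{\bar\lambda}$ in $\Sigma_{\bar\lambda}$ by continuity.

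Next, I will combine the boundary bounds of Theorem \ref{th:boundary_bound} with the global lower bound of Lemma \ref{lem:global_lower_bound} to pick $\tilde\lambda \in (0, \bar\lambda/2)$ with
\[
\sup_{\Sigma_{\tilde\lambda}} u < \inf_{\mathbb{R}^N_+ \setminus \Sigma_{\bar\lambda}} u,
\]
so that $u \le u_\lambda$ holds in $\Sigma_{\tilde\lambda}$ automatically for every $\lambda \in [\bar\lambda, \bar\lambda + \bar\lambda/2]$. The problem then reduces to showing that $u \le u_\lambda$ in $\Sigma_\lambda \setminus \Sigma_{\tilde\lambda}$ for $\lambda \in (\bar\lambda, \bar\lambda + \varepsilon)$ and some $\varepsilon > 0$. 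In that region, Lemma \ref{lem:global_lower_bound} forces $u, u_\lambda \ge c_0 > 0$, so $g(t) := t^{-\gamma}+f(t)$ is uniformly Lipschitz on the range of $u$ and $u_\lambda$, and standard regularity \cite{MR709038,MR727034} provides uniform $L^\infty$ bounds on $\nabla u$ and $\nabla u_\lambda$ there.

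The heart of the argument, following \cite{MR4439897}, will be a dichotomy at the critical value $\bar\lambda$: on every connected component of $(\Sigma_{\bar\lambda} \setminus \overline{\Sigma_{\tilde\lambda}}) \setminus (Z_u \cup Z_{u_{\bar\lambda}})$, a strong comparison principle for $-\Delta_p + \vartheta|\nabla\cdot|^q$ in the spirit of \cite[Theorem 1.2]{MR3859194} gives either $u \equiv u_{\bar\lambda}$ or $u < u_{\bar\lambda}$ strictly. The first alternative would force $u$ to take a value in the discrete set $Z_g$ on that component; since every such component must reach either $\partial\mathbb{R}^N_+$ (where $u < u_{\bar\lambda}$) or the critical set, a propagation argument rules this out. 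Hence $u < u_{\bar\lambda}$ strictly away from the critical set. For $\lambda = \bar\lambda + \varepsilon$ with $\varepsilon$ small, I will then decompose the set $\{u > u_\lambda\} \cap (\Sigma_\lambda \setminus \Sigma_{\tilde\lambda})$ in the $x_N$-slice form demanded by Theorem \ref{th:wcp_p<2}: a part $A_{x'}$ of small measure (a thin neighborhood of the contact set at $\bar\lambda$) and a part $B_{x'}$ contained in $\{|\nabla u|+|\nabla u_\lambda| < \delta\}$ (a neighborhood of $Z_u \cup Z_{u_{\bar\lambda}}$). Applying Theorem \ref{th:wcp_p<2} then yields $u \le u_\lambda$ on that set, contradicting $\lambda > \bar\lambda$.

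The main obstacle will be producing the slice decomposition: it requires quantitative control on the measure of $\{|\nabla u| < \delta\}$ inside $x_N$-slices and on tubular neighborhoods of the contact set, which in \cite{MR4439897} relies on weighted Sobolev bounds that hold precisely under $p > \frac{2N+2}{N+2}$; I will have to check that the subcritical gradient term ($1 < q \le p$) does not spoil these estimates. A secondary subtlety is the dichotomy step itself, where one must verify that the strong comparison principle of \cite{MR3859194} (or a suitable analog) applies to our operator on compact regions bounded away from the critical set; this should follow because on such regions the gradient term is a bounded Lipschitz perturbation of the $p$-Laplacian.
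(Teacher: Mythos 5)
Your overall scaffolding (defining $\Lambda$, $\bar\lambda$, reducing to $\Sigma_\lambda\setminus\Sigma_{\tilde\lambda}$ via Theorem \ref{th:boundary_bound} and Lemma \ref{lem:global_lower_bound}, then using a weak comparison principle with an $A_{x'}$/$B_{x'}$ slice decomposition) matches the paper, but the two steps you leave as "the heart of the argument" and "the main obstacle" are precisely where the actual work lies, and as proposed they do not go through. First, the step from strict inequality at $\bar\lambda$ to $u\le u_{\bar\lambda+\varepsilon}$: your decomposition of $\{u>u_{\bar\lambda+\varepsilon}\}\cap(\Sigma_{\bar\lambda+\varepsilon}\setminus\Sigma_{\tilde\lambda})$ into a thin tubular neighborhood of the contact set plus a small-gradient set presupposes a separation $u<u_{\bar\lambda}$ that is \emph{uniform in $x'$} away from the critical set. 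The strip is unbounded in $x'$, so the pointwise strict inequality furnished by strong comparison does not persist under the perturbation $\bar\lambda\mapsto\bar\lambda+\varepsilon$ without such uniformity, and there is no a priori control of the measure (in $x_N$-slices, uniformly in $x'$) of neighborhoods of the contact set; the "weighted Sobolev bounds on $\{|\nabla u|<\delta\}$" you invoke are neither available in this unbounded setting nor needed (the small-gradient set is simply placed in $B_{x'}$ and requires no measure control). The paper obtains the uniformity by a translation--compactness argument: assuming the claim (its \eqref{compact}) fails along $\varepsilon_n\to0$ at points $x_n$ with $|\nabla u(x_n)|\ge\rho$, it translates $u_n(x',y)=u(x'+x_n',y)$, passes to a $C^{1,\alpha'}_{\rm loc}$ limit $\tilde u$ (still a positive solution by Lemma \ref{lem:global_lower_bound}), and contradicts a strong comparison proposition applied to $\tilde u$. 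Nothing in your plan plays this role. Relatedly, Theorem \ref{th:wcp_p<2} is stated for \emph{bounded} domains and cannot be applied on the unbounded set you feed it; the correct tool here is the strip version (Theorem \ref{th:wcp}, from \cite{MR3303939}), which is what the paper uses.

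Second, the dichotomy itself is not a citation-level fact for sign-changing $g$: on components of the complement of the \emph{critical} set $Z_u\cup Z_{u_{\bar\lambda}}$ the local strong comparison principle applies, but the constant-sign principle of \cite{MR3859194} applies on components of the complement of $Z_{g(u)}$, and you conflate the two; moreover "the first alternative forces $u$ to take a value in $Z_g$" and "a propagation argument rules this out" is exactly the delicate point, since no strong comparison is available across zeros of $g$ or across the critical set. The paper proves this as a separate proposition for the operator $-\Delta_p+\vartheta|\nabla\cdot|^q$: assuming a touching point outside $Z_u$, it propagates equality to a component of $\Sigma_\lambda\setminus Z_{g(u)}$, slides a ball in the $-e_N$ direction until it touches the level set $\{u=z_k\}$, uses monotonicity of $u$ in $x_N$ to see the slid ball stays in $(z_k,z_{k+1})$, applies the Hopf lemma to the translated function $w=u-z_k$ (with the extra term $\vartheta|\nabla w|^q$, $q>1$, absorbed) to get $\nabla u\neq0$ at the touching point, and then descends through the discrete zeros $z_k>z_{k-1}>\dots>z_0=0$, reaching a contradiction with $u>0$. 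Without supplying this descent argument (adapted to the gradient term) and the translation--compactness step, your proposal is an outline of the known strategy rather than a proof.
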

	
	The proof follows the techniques in \cite{MR4439897} (see also \cite{MR4902901}).
	Given \(\lambda > 0\) and a solution \(u\) to \eqref{main}, we define  
	\[
	u_\lambda(x', x_N) := u(x', 2\lambda - x_N)
	\]
	for \(x \in \Sigma_{2\lambda}\).
	We prove the following strong comparison principle.
	
	\begin{proposition}
		Under the assumption of Proposition \ref{prop:monotonicity}, we assume
		\[
		\frac{\partial u}{\partial x_N} \geq 0 \quad \text{in } \Sigma_\lambda \quad \text{and} \quad u \leq u_\lambda \quad \text{in } \Sigma_\lambda
		\]  
		for some \(\lambda > 0\). Then \(u < u_\lambda\) in \(\Sigma_\lambda \setminus Z_u\).
	\end{proposition}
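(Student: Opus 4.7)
The plan is to derive a linear elliptic equation for $w := u_\lambda - u \ge 0$ and apply a strong maximum principle on connected components of $\Sigma_\lambda \setminus Z_u$, in the spirit of \cite{MR4439897}. Since equation \eqref{main} is invariant under the reflection $x_N \mapsto 2\lambda - x_N$, the reflected function $u_\lambda$ solves the same PDE as $u$ in $\Sigma_\lambda$. By Lemma \ref{lem:global_lower_bound}, $u$ and $u_\lambda$ are bounded below by a positive constant on compact subsets of $\Sigma_\lambda$, so $g(t) = \frac{1}{t^\gamma} + f(t)$ is effectively locally Lipschitz along the solutions. Subtracting the two equations and applying the fundamental theorem of calculus to the three nonlinearities $-\Delta_p$, $\vartheta |\nabla \cdot|^q$, and $g$, one obtains for $w$ a linear equation
\[
-\partial_i(a_{ij}(x)\,\partial_j w) + b_i(x)\,\partial_i w + c(x)\,w = 0 \quad\text{ in } \Sigma_\lambda,
\]
where the symmetric matrix $(a_{ij}(x))$ has eigenvalues comparable to $(|\nabla u(x)| + |\nabla u_\lambda(x)|)^{p-2}$ and $b_i, c$ are locally bounded, so the operator is uniformly elliptic with bounded coefficients on any set where $\nabla u$ and $\nabla u_\lambda$ are uniformly nonzero.

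Next, suppose for contradiction that $w(x_0) = 0$ at some $x_0 \in \Sigma_\lambda \setminus Z_u$. Since $x_0$ is a global minimum of $w \ge 0$, $\nabla w(x_0) = 0$ and hence $\nabla u_\lambda(x_0) = \nabla u(x_0) \ne 0$. By continuity both gradients remain uniformly bounded away from zero on a neighborhood $U$ of $x_0$; the classical strong maximum principle for linear elliptic equations then forces $w \equiv 0$ on $U$. Iterating at every zero of $w$ in the connected component $\Omega_0$ of $\Sigma_\lambda \setminus Z_u$ through $x_0$ (at each such zero again $\nabla u_\lambda = \nabla u \ne 0$), we find that $\{w = 0\} \cap \Omega_0$ is both open and closed in $\Omega_0$, so $w \equiv 0$ on all of $\Omega_0$.

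To reach a contradiction, I would use Theorem \ref{th:boundary_behavior}: there exists $\lambda_0 > 0$ such that $\Sigma_{\lambda_0} \subset \Sigma_\lambda \setminus Z_u$, and $u \equiv 0$ on $\partial\mathbb{R}^N_+$ while $u_\lambda(\cdot, 0) = u(\cdot, 2\lambda) > 0$, so $w$ cannot vanish identically on the connected component of $\Sigma_\lambda \setminus Z_u$ containing $\Sigma_{\lambda_0}$. If $\Omega_0$ equals that component we are done. Otherwise $\Omega_0$ is a bounded component of $\Sigma_\lambda \setminus Z_u$ not reaching $\partial\mathbb{R}^N_+$; the identity $u \equiv u_\lambda$ on $\Omega_0$ together with the monotonicity $\partial_N u \ge 0$ on $\Sigma_\lambda$ forces $\partial_N u \equiv 0$ on $\Omega_0$, and applying a strong maximum principle to the linearized equation satisfied by $\partial_N u$ combined with $\partial \Omega_0 \subset Z_u$ would yield the contradiction. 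The hard part will be precisely this last step: the linearized operator becomes singular on $Z_u$ and the sign-changing nature of $g$ forbids a direct global Harnack estimate, so the monotonicity hypothesis $\partial_N u \ge 0$ must be used essentially to rule out $w \equiv 0$ on interior components of $\Sigma_\lambda \setminus Z_u$.
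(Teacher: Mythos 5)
The first half of your argument is sound and close in spirit to the paper's opening move: near a contact point the gradients are nonzero, the linearized equation is uniformly elliptic there, and the local strong comparison principle propagates $u\equiv u_\lambda$ over the connected component $\Omega_0$ of $\Sigma_\lambda\setminus Z_u$ containing the contact point; if that component contains the strip $\Sigma_{\lambda_0}$ given by Theorem \ref{th:boundary_behavior}, the Dirichlet condition gives the contradiction. The genuine gap is the remaining case, which is the crux of the statement, and your sketch for it does not work. First, the claim that $u\equiv u_\lambda$ on $\Omega_0$ together with $\frac{\partial u}{\partial x_N}\ge0$ in $\Sigma_\lambda$ forces $\frac{\partial u}{\partial x_N}\equiv0$ on $\Omega_0$ is unjustified: equality on the open set $\Omega_0$ gives $\nabla u=\nabla u_\lambda$ there, i.e. $\partial_{x_N}u(x',x_N)=-\partial_{x_N}u(x',2\lambda-x_N)$, which only yields a sign for $\partial_{x_N}u$ at the \emph{reflected} points lying in $\Sigma_{2\lambda}\setminus\Sigma_\lambda$, where the monotonicity hypothesis says nothing. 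Second, even granting that claim, the intended endgame --- a strong maximum principle for the linearized equation satisfied by $\partial_{x_N}u$ on a component whose boundary lies in $Z_u$ --- is exactly the tool that is not available here: the linearized operator degenerates (for $p\neq2$) precisely on $Z_u\supset\partial\Omega_0$, and $g$ changes sign, as you yourself acknowledge. So the proposal leaves the decisive case unproved.

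Note also that your argument never uses the discreteness of $Z_g$, which is an essential hypothesis and is what the paper's proof exploits to close exactly this case. The paper works with connected components of $\Sigma_\lambda\setminus Z_{g(u)}$ rather than of $\Sigma_\lambda\setminus Z_u$: on such a component $g(u)$ has constant sign, so the strong comparison principle of \cite{MR3859194} gives $u\equiv u_\lambda$ there with no nondegeneracy assumption on $\nabla u$. The monotonicity hypothesis is then used in a different way: a ball inside the component is slid toward $\partial\mathbb{R}^N_+$ in the direction $-e_N$, and $\partial_{x_N}u\ge0$ guarantees that $u$ stays below the upper zero $z_{k+1}$ on the slid ball, so the first touching point of $\partial\Omega_0$ has $u=z_k$. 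The Hopf boundary lemma applied to $u-z_k$ shows the gradient is nonzero there, the local strong comparison principle transports the equality $u=u_\lambda$ across the level set $\{u=z_k\}$ into the next band $(z_{k-1},z_k)$, and a finite downward induction on $k$ ends in the band $(0,z_1)$, where a touching value $0$ contradicts $u>0$. Some mechanism of this kind (or another substitute for the missing strong maximum principle across $Z_u$ with sign-changing $g$) is needed; without it, your proof is incomplete.
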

	
	\begin{proof}
		Since \( Z_g \) is a discrete set, we can denote all zeroes of \( g \) in \((0, +\infty)\) by  
		\[ 
		0 < z_1 < z_2 < z_3 < \cdots  
		\]  
		For convenience, we also denote \( z_0 = 0 \). By contradiction, assume that there exists  
		\[ 
		\overline{x} \in \Sigma_\lambda \setminus Z_u  
		\]  
		such that \( u(\overline{x}) = u_\lambda(\overline{x}) \). Since \( u \in C^1(\mathbb{R}_+^N) \), we deduce \( B_\varepsilon(\overline{x}) \subset \Sigma_\lambda \setminus Z_u \) for some \( \varepsilon > 0 \) sufficiently small. Notice that both \( u \) and \( u_\lambda \) solve  
		\[ 
		-\Delta_p w + \vartheta|\nabla w|^q = g(w) \quad \text{in } B_\varepsilon(\overline{x}).  
		\]  
		The strong comparison principle (see \cite[Theorem 2.5.2]{MR2356201}) now comes into play to yield  
		\[ 
		u = u_\lambda \quad \text{in } B_\varepsilon(\overline{x}).  
		\]  
		Moreover, since \( u \) is not constant in \( B_\varepsilon(\overline{x}) \) and \( Z_g \) is a discrete set, we can find \( x_0 \in B_\varepsilon(\overline{x}) \) such that \( x_0 \notin Z_{g(u)} \). That is,  
		\[ 
		z_k < u(x_0) < z_{k+1} \quad \text{for some } k \geq 0.  
		\] 
		
		Let \( \Omega_0 \) be the connected component of \( \Sigma_\lambda \setminus Z_{g(u)} \) containing \( x_0 \). Then for each \( x \in \partial \Omega_0 \), we have either \( u(x) = z_k \) or \( u(x) = z_{k+1} \). By the strong comparison principle for constant sign nonlinearities (see \cite{MR3859194}), since \( u(x_0) = u_\lambda(x_0) \), we have  
		\begin{equation}\label{Omega0}
			u = u_\lambda \quad \text{in } \Omega_0.
		\end{equation}  
		Because \( \Omega_0 \) is open, there exists \( r_0 > 0 \) such that  
		\[ 
		B_{2r_0}(x_0) \subset \Omega_0.
		\]
		
		Now we slide \( B_{r_0}(x_0) \) in \( \Omega_0 \), towards \( \partial \mathbb{R}_+^N \) in direction \(-e_N \) and keep its center on the ray \(\{ x_0 - te_N \mid t \geq 0 \}\), until it touches for the first time \( \partial \Omega_0 \) at some point \( \hat{x}_0 \in \partial \Omega_0 \). We denote by \( \tilde{x}_0 = x_0 - t_0 e_N \) the new center of the slid ball.
		
		Since \( u \) is monotone non-decreasing in \( \Sigma_\lambda \), for all \( x \in B_{r_0}(\tilde{x}_0) \), we have
		\[
		z_k < u(x) \leq u(x + t_0 e_N) \leq \max_{B_{r_0}(x_0)} u < z_{k+1}.
		\]
		
		Therefore, the touching point \( \hat{x}_0 \) must satisfy \( u(\hat{x}_0) = z_k \). Moreover, by continuity, we deduce from \eqref{Omega0} that \( u(\hat{x}_0) = u_\lambda(\hat{x}_0) \). We consider two cases.
		
		\textit{Case 1:} \( k = 0 \), i.e., \( u(\hat{x}_0) = z_0 = 0 \). Then \( u(2\lambda e_N - \hat{x}_0) = 0 \). This contradicts the fact that \( u \) is positive in \( \mathbb{R}_+^N \).
		
		\textit{Case 2:} \( k \geq 1 \). Let us define the function
		\[
		w(x) := u(x) - z_k > 0 \quad \text{for } x \in B_{r_0}(\tilde{x}_0).
		\]
		
		Since \( 1 < p < 2 \) and \( g \) is Lipschitz continuous in any compact interval $K\subset (0,+\infty)$, we have
		\[
		Cw^{p-1} + g(u) = Cw^{p-1} + g(u) - g(z_k) \geq Cw^{p-1} - C_0(u - z_k) \geq 0
		\]
		for sufficiently large \( C \). Hence \( w \) satisfies
		\[
		\begin{cases}
			-\Delta_p w + \vartheta|\nabla w|^q + Cw^{p-1} \geq 0 & \text{in } B_{r_0}(\tilde{x}_0), \\
			w > 0 & \text{in } B_{r_0}(\tilde{x}_0), \\
			w(\hat{x}_0) = 0.
		\end{cases}
		\]
		
		By the Hopf boundary lemma (see Theorems 2.5.1 and 5.5.1 in \cite{MR2356201}), we have
		\[
		\frac{\partial u}{\partial \eta}(\hat{x}_0) < 0,
		\]
		where \(\eta = \frac{\hat{x}_0 - \tilde{x}_0}{|\hat{x}_0 - \tilde{x}_0|}\) is the outward normal at \(\hat{x}_0\). In particular, \(|\nabla u(\hat{x}_0)| \neq 0\). Since \(u \in C^1(\mathbb{R}^N_+)\), there exists a ball \(B_{\rho_0}(\hat{x}_0) \subset \Sigma_\lambda\) such that \(|\nabla u| \neq 0\) in \(B_{\rho_0}(\hat{x}_0)\). By the strong comparison principle, since \(u(\hat{x}_0) = u_\lambda(\hat{x}_0)\), we have	
		\[
		u = u_\lambda \quad \text{in } B_{\rho_0}(\hat{x}_0).
		\]
		
		From \eqref{Omega0}, we can find a point \(x_1 \in \{\hat{x}_0 + t\eta \mid t > 0\} \cap B_{\rho_0}(\hat{x}_0)\) which is close to \(\hat{x}_0\) such that		
		\[
		z_{k-1} < u(x_1) < u(\hat{x}_0) = z_k.
		\]
		
		Therefore, from a point \(x_0 \in \Sigma_\lambda\) with \(u(x_0) = u_\lambda(x_0)\) and \(z_k < u(x_0) < z_{k+1}\), we have found a new point \(x_1 \in \Sigma_\lambda\) satisfying \(u(x_1) = u_\lambda(x_1)\) and \(z_{k-1} < u(x_1) < z_k\). Repeating this argument a finite number of times, we finally reach a point \(x_k \in \Sigma_\lambda\) such that \(u(x_k) = u_\lambda(x_k)\) and \(z_0 < u(x_k) < z_1\). Then we reach a contradiction as in Case 1.
	\end{proof}
	
	We also recall the following weak comparison principle.
	\begin{theorem}[Weak comparison principle in strips \cite{MR3303939}]\label{th:wcp}
		Let \(1 < p < 2\), \(1 < q \le p\) let and \(f\) be a locally Lipschitz continuous function on \(\mathbb{R}\). Fix \(\lambda_0 > 0\) and \(L_0 > 0\). Let \(u, v \in C^1(\Sigma_{\lambda_0})\) such that
		\[
		\|u\|_{L^\infty(\Sigma_{\lambda_0})} + \|\nabla u\|_{L^\infty(\Sigma_{\lambda_0})} \leq L_0, \quad \|v\|_{L^\infty(\Sigma_{\lambda_0})} + \|\nabla v\|_{L^\infty(\Sigma_{\lambda_0})} \leq L_0
		\]
		and
		\[
		\begin{cases}
			-\Delta_p u + \vartheta|\nabla u|^q \leq f(u) & \text{in } \Sigma_{\lambda_0}, \\
			-\Delta_p v + \vartheta|\nabla v|^q \geq f(v) & \text{in } \Sigma_{\lambda_0}, \\
			u \leq v & \text{on } \mathcal{S}_{\tau,\varepsilon},
		\end{cases}
		\]
		where \(\tau, \varepsilon > 0\) and the open set \(\mathcal{S}_{\tau,\varepsilon} \subset \Sigma_{\lambda_0}\) is such that
		\[
		\mathcal{S}_{\tau,\varepsilon} = \bigcup_{x' \in \mathbb{R}^{N-1}} \{x'\} \times (A^{\tau}_{x'} \cup B^{\varepsilon}_{x'}), \quad A^{\tau}_{x'} \cap B^{\varepsilon}_{x'} = \emptyset
		\]
		and for each \(x'\), \(A^{\tau}_{x'}, B^{\varepsilon}_{x'} \subset (0, \lambda_0)\) are measurable sets satisfying
		\[
		|A^{\tau}_{x'}| \leq \tau \quad \text{and} \quad B^{\varepsilon}_{x'} \subset \{x_N \in (0, \lambda_0) \mid |\nabla u(x', x_N)| \leq \varepsilon, |\nabla v(x', x_N)| \leq \varepsilon\}.
		\]
		
		Then there exist
		\[
		\tau_0 = \tau_0(N, p, q, \lambda_0, L_0) > 0
		\]
		and
		\[
		\varepsilon_0 = \varepsilon_0(N, p, q, \lambda_0, L_0) > 0
		\]
		such that, if \(0 < \tau < \tau_0\) and \(0 < \varepsilon < \varepsilon_0\), it follows that
		\[
		u \leq v \quad \text{in } \Sigma_{\lambda_0}.
		\]
	\end{theorem}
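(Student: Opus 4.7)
The plan is to argue by a weighted energy estimate on $w:=(u-v)^+$, using $w$ times a cutoff in the transverse $x'$ variable as a test function in the difference of the two weak inequalities. Since the theorem guarantees $u\le v$ on $\mathcal{S}_{\tau,\varepsilon}$, the set $\{w>0\}$ lies in the complementary region of the strip, which by construction has, in each vertical fibre, measure $\le \lambda_0$ and the property that whenever the fibre measure exceeds $\tau$ the gradients of both $u$ and $v$ are larger than $\varepsilon$. This structural information will be the bridge between the nonlinear terms and a Poincar\'e-type inequality.

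First, I would test the difference of the two inequalities with $\psi_R^2 w$, where $\psi_R\in C_c^\infty(\mathbb{R}^{N-1})$ is a standard cutoff in $x'$ (needed because the strip is unbounded in $x'$). For the principal part, I would use the Damascelli--Sciunzi monotonicity inequality valid for $1<p<2$,
\[
\bigl\langle |\nabla u|^{p-2}\nabla u-|\nabla v|^{p-2}\nabla v,\,\nabla u-\nabla v\bigr\rangle\ \ge\ C_p(|\nabla u|+|\nabla v|)^{p-2}|\nabla w|^2\quad\text{on }\{w>0\}.
\]
For the $q$-gradient term I would use the elementary bound $\bigl||\nabla u|^q-|\nabla v|^q\bigr|\le C(|\nabla u|+|\nabla v|)^{q-1}|\nabla w|$, valid since $q\ge 1$. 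The $f$-term is controlled by $L\,w^2$ via the local Lipschitz continuity of $f$ (using the $L^\infty$ bound $L_0$ to choose $L$). After absorbing the cutoff contributions and applying Young's inequality on the $q$-term, one arrives at an inequality of the form
\[
\int_{\{w>0\}} \psi_R^2(|\nabla u|+|\nabla v|)^{p-2}|\nabla w|^2
\ \le\ C_1\int_{\{w>0\}} \psi_R^2(|\nabla u|+|\nabla v|)^{\sigma}w^2 + C_2\int \psi_R|\nabla\psi_R|\cdot(\cdots),
\]
where $\sigma=\max\{0,2q-p\}$ and the last term is a harmless cutoff remainder that tends to $0$ as $R\to\infty$ under the global $L^\infty$ control on $u,v,\nabla u,\nabla v$.

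The crucial step is to absorb the right-hand side into the left-hand side. On the fibres $\{x'\}\times(0,\lambda_0)\setminus(A^\tau_{x'}\cup B^\varepsilon_{x'})$, the gradients satisfy $|\nabla u|+|\nabla v|\ge \varepsilon$, while $w$ vanishes on $A^\tau_{x'}\cup B^\varepsilon_{x'}$. Applying the one-dimensional Poincar\'e inequality on each fibre to $w(x',\cdot)$, which vanishes on a set of positive measure once one restricts suitably, one obtains
\[
\int_0^{\lambda_0} w^2\,dx_N\ \le\ C(\lambda_0,\tau)\int_{\{|\nabla u|+|\nabla v|>\varepsilon\}} |\nabla w|^2\,dx_N,
\]
with $C(\lambda_0,\tau)\to 0$ as $\tau\to 0$. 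Combining this with the weighted degeneracy $(|\nabla u|+|\nabla v|)^{p-2}\ge (2L_0)^{p-2}$ (uniform lower bound from $L_0$) on the relevant portion and with $(|\nabla u|+|\nabla v|)^\sigma\le (2L_0)^\sigma$ everywhere, one arrives at
\[
\int \psi_R^2|\nabla w|^2\ \le\ C(L_0,\lambda_0,p,q)\bigl(\tau+\varepsilon^{2-p}\cdot\text{(small)}\bigr)\int \psi_R^2|\nabla w|^2+o_R(1),
\]
whence $w\equiv 0$ provided $\tau<\tau_0$ and $\varepsilon<\varepsilon_0$ are small enough, and then $R\to\infty$.

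The main obstacle is the interplay of the singular weight $(|\nabla u|+|\nabla v|)^{p-2}$ on the left with the non-degenerate weights on the right: when $1<p<2$, the weight blows up precisely where the gradients are small, i.e.\ in the set $B^\varepsilon_{x'}$, but $w$ is assumed to vanish there, so that region contributes nothing; this is exactly why the hypothesis on $B^\varepsilon_{x'}$ is formulated in terms of the gradients. Carrying out this splitting carefully and tracking the dependence of the constants on $\tau,\varepsilon$ so that the final absorption is legitimate is the delicate computational heart of the argument; the unboundedness of $\Sigma_{\lambda_0}$ in $x'$ is a secondary nuisance that is handled by the standard Damascelli--Sciunzi cutoff argument, using $\nabla u,\nabla v\in L^\infty$ to send the cutoff error to zero.
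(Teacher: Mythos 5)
There is an important mismatch to flag first: the paper does not prove this theorem at all — it is recalled verbatim from \cite{MR3303939}, and the proof there runs the energy estimate \emph{inside} $\mathcal{S}_{\tau,\varepsilon}$ under the boundary hypothesis $u\le v$ on $\partial\mathcal{S}_{\tau,\varepsilon}$: one tests with $(u-v)^+$ (cut off in $x'$) supported in $\mathcal{S}_{\tau,\varepsilon}$ and splits each fibre into $A^{\tau}_{x'}$, where a weighted Poincar\'e inequality with constant controlled by $\tau$ applies, and $B^{\varepsilon}_{x'}$, where $|\nabla u|,|\nabla v|\le\varepsilon$, so that for $1<p<2$ the weight $(|\nabla u|+|\nabla v|)^{p-2}\ge(2\varepsilon)^{p-2}$ on the left is large while the factor $(|\nabla u|+|\nabla v|)^{q-1}\le(2\varepsilon)^{q-1}$ in the first-order term is small (this is precisely where $q>1$ is needed). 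Your proposal inverts this geometry: you place $\{w>0\}$ in the \emph{complement} of $\mathcal{S}_{\tau,\varepsilon}$ and assert that this complement has, fibre by fibre, small measure up to the set where the gradients exceed $\varepsilon$. The hypotheses give no such thing: the sets $A^{\tau}_{x'}$ and $B^{\varepsilon}_{x'}$ describe $\mathcal{S}_{\tau,\varepsilon}$ itself, its complement in $(0,\lambda_0)$ may have full measure $\lambda_0$, and it may contain large portions where the gradients are small, since $B^{\varepsilon}_{x'}$ is only \emph{contained in} the small-gradient set. Hence your key step — the fibre-wise Poincar\'e inequality with constant $C(\lambda_0,\tau)\to 0$ as $\tau\to 0$ on $\{w>0\}$ — is unjustified, and the final absorption collapses. (In fact, read literally, the statement as transcribed — comparison assumed \emph{on} $\mathcal{S}_{\tau,\varepsilon}$, conclusion on all of $\Sigma_{\lambda_0}$ — is false: take $A^{\tau}_{x'}=B^{\varepsilon}_{x'}=\emptyset$, so the hypothesis is vacuous. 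The intended statement, which is also how the appendix uses it, assumes $u\le v$ on $\partial\mathcal{S}_{\tau,\varepsilon}$ and concludes $u\le v$ in $\mathcal{S}_{\tau,\varepsilon}$; no proof can close without this correction.)

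Two further points would still need repair after fixing the geometry. First, you never exploit smallness of the gradients where they are small: bounding $(|\nabla u|+|\nabla v|)^{\sigma}$ by $(2L_0)^{\sigma}$ and the weight by $(2L_0)^{p-2}$ throws away exactly the mechanism that makes the degenerate weight and the $q$-gradient term absorbable on $B^{\varepsilon}_{x'}$; with only these global bounds the smallness in $\varepsilon$ never enters, so the constants cannot be made to depend on $\varepsilon<\varepsilon_0$ as required. Second, the cutoff remainder is not $o_R(1)$ for free: with mere $L^\infty$ bounds on $u,v,\nabla u,\nabla v$, the terms over ${\rm supp}\,\nabla\psi_R$ in the strip are of order $R^{N-2}$ (bounded, not vanishing, already for $N=2$, and growing for $N\ge3$), so one cannot simply send $R\to\infty$; the argument in \cite{MR3303939} (following Damascelli--Sciunzi) handles this by comparing the localized energy on $B_R$ with that on $B_{2R}$ and using an iteration lemma, and some such device is indispensable here as well.
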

	
	We are ready to prove Proposition \ref{prop:monotonicity}.
	
	\begin{proof}[Proof of Proposition \ref{prop:monotonicity}]
		We start moving the plane from \(\{x_N = 0\}\) to \(+\infty\). Let us consider	
		\[
		\Lambda := \{\lambda > 0 \mid u \leq u_t \text{ in } \Sigma_t \text{ for all } 0 < t \leq \lambda\}.
		\]
		
		By Theorem \ref{th:boundary_behavior}, we have \(\Lambda \neq \emptyset\). Hence, we can define	
		\[
		\overline{\lambda} = \sup \Lambda.
		\]
		
		To conclude the proof, we have to show that	
		\[
		\overline{\lambda} = +\infty.
		\]
		
		By contradiction, in what follows, we assume that \(\overline{\lambda} < +\infty\). Then we have \(u \leq u_{\overline{\lambda}} \text{ in } \Sigma_{\overline{\lambda}}\). Set \(\lambda_0 = \overline{\lambda} + 1\) and	
		\[
		L_0 = \|u\|_{L^{\infty}(\Sigma_{2\lambda_0})} + \|\nabla u\|_{L^{\infty}(\Sigma_{2\lambda_0})}
		\]	
		and take \(\tau_0 = \tau_0(N, p, q, \lambda_0, L_0) > 0\) and \(\varepsilon_0 = \varepsilon_0(N, p, q, \lambda_0, L_0) > 0\) as in Theorem \ref{th:wcp}.
		
		Fix \(0 < \delta < \min\left\{\frac{\overline{\lambda}}{2}, \frac{\tau_0}{4}\right\}\) and \(0 < \rho < \varepsilon_0\). By Theorem \ref{th:boundary_bound} and Lemma \ref{lem:global_lower_bound}, we may assume
		\begin{equation}\label{boundary}
			\sup_{\Sigma_\delta} u \le \inf_{\mathbb{R}^N_+\setminus\Sigma_{\overline{\lambda}}} u.
		\end{equation}
		
		We claim that there exists \(\overline{\varepsilon} > 0\) such that for all \(0 < \varepsilon \leq \min\left\{\overline{\varepsilon}, \frac{\tau_0}{2}, 1\right\}\) it follows that	
		\begin{equation}\label{compact}
			u \leq u_{\overline{\lambda}+\varepsilon} \quad \text{in } \Sigma_{\delta,\overline{\lambda}-\delta} \setminus \{|\nabla u| < \rho\}.
		\end{equation}
		
		Indeed, if \eqref{compact} does not hold, then we can find a sequence of positive numbers \((\varepsilon_n)\) converging to 0 and a sequence of points	
		\[
		x_n = (x'_n, y_n) \in \Sigma_{\delta,\overline{\lambda}-\delta}
		\]	
		such that	
		\[
		u(x_n) > u_{\overline{\lambda}+\varepsilon_n}(x_n) \quad \text{and} \quad |\nabla u(x_n)| \geq \rho
		\]	
		for all \(n\). Passing to a subsequence if necessary, we may and do assume that \(\lim_{n \to \infty} y_n = y_0 \in [\delta, \overline{\lambda} - \delta]\). Let us define	
		\[
		u_n(x', y) = u(x' + x'_n, y) \quad \text{for } (x', y) \in \mathbb{R}_+^N.
		\]
		Then \( u_n \) satisfies
		\begin{equation}\label{un_eq}
			\begin{cases}
				-\Delta_p u_n + \vartheta|\nabla u_n|^q = g(u_n) & \text{in } \mathbb{R}^N_+, \\
				u_n = 0 & \text{on } \partial\mathbb{R}^N_+.
			\end{cases}
		\end{equation}
		
		Notice that $(u_n)$ is bounded from both sides on $\Sigma_M\setminus\Sigma_m$ for $M>m>0$.
		By \( C^{1,\alpha} \) estimates, Ascoli-Arzelà's theorem and a standard diagonal process, we deduce
		\[
		u_n \to \tilde{u} \quad \text{in } C^{1,\alpha'}_{\rm loc}(\mathbb{R}^N_+)
		\]
		up to a subsequence for \( 0 < \alpha' < \alpha \). We can pass \eqref{un_eq} to the limit to obtain
		\[
		\begin{cases}
			-\Delta_p\tilde{u} + \vartheta|\nabla\tilde{u}|^q = g(\tilde{u}) & \text{in } \mathbb{R}^N_+, \\
			\tilde{u} = 0 & \text{on } \partial\mathbb{R}^N_+.
		\end{cases}
		\]
		
		Moreover, we have
		\begin{enumerate}[label=(\roman*)]
			\item \( \tilde{u} \geq 0 \) in \( \mathbb{R}^N_+ \),
			\item \( \tilde{u} \leq \tilde{u}_\lambda \) in \( \Sigma_\lambda \) for all \( 0 < \lambda \leq \overline{\lambda} \),
			\item \( \tilde{u}(x_0) \geq \tilde{u}_{\overline{\lambda}}(x_0) \),
			\item \( |\nabla\tilde{u}(x_0)| \geq \rho \),
		\end{enumerate}
		where \( x_0 = (0, y_0) \in \Sigma_{\overline{\lambda}} \). Items (i), (iv) and Lemma \ref{lem:global_lower_bound} imply \( \tilde{u} > 0 \) in \( \mathbb{R}_+^N \). Hence \( \tilde{u} \) is a solution to (1). Items (ii) and (iii) imply that \( \frac{\partial \tilde{u}}{\partial x_N} \geq 0 \) in \( \Sigma_{\overline{\lambda}} \) and \( \tilde{u}(x_0) = \tilde{u}_{\overline{\lambda}}(x_0) \). However, the existence of such a solution \( \tilde{u} \) and such a point \( x_0 \) contradicts Proposition \ref{prop:monotonicity}. Therefore, \eqref{compact} must hold.
		
		On the other hand, an application of Theorem~[9] with \( v = u_{\overline{\lambda}+\varepsilon} \) yields
		\[
		u \leq u_{\overline{\lambda}+\varepsilon} \quad \text{in } \Sigma_{\overline{\lambda}-\delta,\overline{\lambda}+\varepsilon} \cup \left(\Sigma_{\delta,\overline{\lambda}-\delta} \cap \{|\nabla u| < \rho\}\right).
		\]
		
		Combining this with \eqref{boundary} and \eqref{compact}, we have that \( u \leq u_{\overline{\lambda}+\varepsilon} \) in \( \Sigma_{\overline{\lambda}+\varepsilon} \) for all \( 0 < \varepsilon \leq \min\left\{\overline{\varepsilon}, \frac{\tau_0}{2}, 1\right\} \). This is a contradiction with the definition of \( \overline{\lambda} \). Therefore, it must hold that \( \overline{\lambda} = +\infty \). This implies the monotonicity of \( u \) in the \( x_N \)-direction.
	\end{proof}
	
\end{document}